\numberwithin{equation}{subsection}
\newtheorem{theorem}[subsection]{Theorem}
\newtheorem{lemma}[subsection]{Lemma}
\newtheorem{corollary}[subsection]{Corollary}
\newtheorem{proposition}[subsection]{Proposition}
\theoremstyle{definition}
\newtheorem{construction}[subsection]{Construction}
\newtheorem{remark}[subsection]{Remark}
\newtheorem{notation}[subsection]{Notation}
\def\calA{\mathcal{A}}
\def\calF{\mathcal{F}}
\def\calH{\mathcal{H}}
\def\calI{\mathcal{I}}
\def\calM{\mathcal{M}}
\def\calO{\mathcal{O}}
\def\calR{\mathcal{R}}
\def\calT{\mathcal{T}}
\def\gotha{\mathfrak{a}}
\def\gothc{\mathfrak{c}}
\def\gothd{\mathfrak{d}}
\def\gothn{\mathfrak{n}}
\def\gothC{\mathfrak{C}}
\def\gothp{\mathfrak{p}}
\def\gothq{\mathfrak{q}}
\def\CC{\mathbb{C}}
\def\FF{\mathbb{F}}
\def\GG{\mathbb{G}}
\def\PP{\mathbb{P}}
\def\QQ{\mathbb{Q}}
\def\RR{\mathbb{R}}
\def\TT{\mathbb{T}}
\def\ZZ{\mathbb{Z}}
\def\bfe{\mathbf{e}}
\def\bfk{\mathbf{k}}
\def\bfp{\mathbf{p}}
\def\bfq{\mathbf{q}}
\def\bfM{\mathbf{M}}
\def\bfN{\mathbf{N}}
\def\bfT{\mathbf{T}}
\def\scrF{\mathscr{F}}
\def\scrH{\mathscr{H}}
\def\ttD{\mathtt{D}}
\def\ttN{\mathtt{N}}
\def\ttS{\mathtt{S}}
\DeclareMathOperator{\End}{End}
\DeclareMathOperator{\Hom}{Hom}
\DeclareMathOperator{\Lie}{Lie}
\DeclareMathOperator{\rank}{rank}
\DeclareMathOperator{\Spec}{Spec}
\newcommand{\tor}{\mathrm{tor}}
\newcommand{\Gr}{\mathrm{Gr}}
\newcommand{\longto}{\longrightarrow}
\newcommand{\dR}{\mathrm{dR}}
\newcommand{\DP}{\mathrm{DP}}
\newcommand{\cris}{\mathrm{cris}}
\newcommand{\ex}{\mathrm{ex}}
\newcommand{\PR}{\mathrm{PR}}
\newcommand{\ps}{\mathrm{ps}}
\newcommand{\Ra}{\mathrm{Ra}}
\newcommand{\univ}{\mathrm{univ}}
\newcommand{\Sh}{\mathrm{Sh}}
\DeclareMathOperator{\GL}{GL}
\definecolor{RED}{rgb}{1,0,0}\definecolor{BLUE}{rgb}{0,0,1} 
\begin{document}
\title[Partial Hasse invariants on splitting models]{Partial Hasse invariants on splitting models of Hilbert modular varieties}

\author{Davide A. Reduzzi}
\author{Liang Xiao}
\address{Liang Xiao, Department of Mathematics, University of Connecticut, Storrs,
Mathematical Sciences
Building 210, Unit 3009, Storrs, Connecticut 06269, USA}
\email{liang.xiao@uconn.edu}
\thanks{D.R. was partially supported by an AMS-Simons Research Travel Grant. L.X. is partially supported by Simons Collaboration Grant \#278433, NSF grant DMS-1502147,  and CORCL research grant from University of California, Irvine.}

\begin{abstract}
Let $F$ be a totally real field of degree $g$, and let $p$ be a prime number. We construct $g$ partial Hasse invariants on the characteristic $p$ fiber of the Pappas-Rapoport splitting model of the Hilbert modular variety for $F$ with level prime to $p$, extending the usual partial Hasse invariants defined over the Rapoport locus. In particular, when $p$ ramifies in $F$, we solve the problem of lack of partial Hasse invariants.
Using the stratification induced by these generalized partial Hasse invariants on the splitting model, we prove in complete generality the existence of Galois pseudo-representations attached to Hecke eigenclasses of paritious weight occurring in the coherent cohomology of Hilbert modular varieties $\mathrm{mod}$ $p^m$, extending a previous result of M. Emerton and the authors which required $p$ to be unramified in $F$.
\end{abstract}

\subjclass[2010]{11F41 (primary), 14G35 11G18 11F80 (secondary).}
\keywords{Partial Hasse invariants; Pappas-Rapoport splitting models; Hilbert modular forms; Galois representations}

\maketitle

\setcounter{tocdepth}{1}
\tableofcontents

\section{Introduction}
Let $F$ be a totally real field of degree $g>1$ over $\QQ$, and let $p$ be a prime number. Fix a large enough finite extension $\FF$ of $\FF_p$. The (characteristic $p$ fiber of the) \emph{Deligne-Pappas moduli space} $\calM^\DP_\FF$ parameterizes $g$-dimensional abelian schemes $A/S$ defined over an $\FF$-scheme $S$ and endowed with an action of the ring of integers of $F$, a polarization, and a suitable prime-to-$p$ level structure (cf. \cite{deligne-pappas}). The scheme $\calM^\DP_\FF$ is normal, and its smooth locus $\calM^\Ra_\FF$, called the \emph{Rapoport locus}, parameterizes those abelian schemes $A/S$ whose sheaf of invariant differentials is locally free of rank one as an $(\calO_F\otimes_\ZZ\calO_S)$-module (cf. \cite{rapoport}). 

In \cite{goren-hasse} and \cite{AG}, F. Andreatta and E. Goren construct some modular forms defined over the Rapoport locus, called the \emph{partial Hasse invariants}, which factor the determinant of the Hasse-Witt matrix of the universal abelian scheme $\calA^\Ra_\FF$ with respect to the action of the totally real field $F$.

When $p$ is unramified in $F$ there are exactly $g$ partial Hasse invariants, which give rise to a good stratification of the Hilbert moduli scheme $\calM^\Ra_\FF=\calM_\FF^\DP$ (cf. \cite{goren-oort} and \cite{AG}). On the other hand, when $p$ ramifies in $F$ the Rapoport locus is open and dense in $\calM^{\DP}_\FF$ with complement of codimension two, and the partial Hasse invariants of \cite{AG} do not extend to the Deligne-Pappas moduli space. In addition, the number of such operators is strictly less than $g$. For example, when $p$ is totally ramified in $F$, only one partial Hasse invariant is defined in \cite{AG} on $\calM^\Ra_\FF$: it is a $g$th root of the determinant of the Hasse-Witt matrix, up to sign. 
The lack of partial Hasse invariants when $p$ ramifies in $F$ was in particular an obstruction in extending to the ramified settings the results proved in the unramified case by M. Emerton and the authors in \cite{emerton-reduzzi-xiao}.

To remedy this, we work in this paper with the (characteristic $p$ fiber of the) \emph{splitting model} of the Hilbert modular scheme constructed by G. Pappas and M. Rapoport in \cite{pappas-rapoport}, and made explicit by S. Sasaki in \cite{sasaki}. This is a smooth scheme $\calM^\PR_\FF$ over $\FF$ endowed with a birational morphism $\calM^\PR_\FF\to\calM^\DP_\FF$ which is an isomorphism if and only if $p$ is unramified in $F$, and which induces an isomorphism from a suitable open dense subscheme of $\calM^\PR_\FF$ onto $\calM_\FF^\Ra$. There is a natural notion of automorphic line bundles on $\calM^\PR_\FF$, and hence of Hilbert modular forms.

We construct $g$ modular forms of non-parallel weight defined over the entire Pappas-Rapoport splitting model $\calM_\FF^\mathrm{PR}$, and extending the classical Hasse invariants over the Rapoport locus. Moreover, when $p$ ramifies in $F$ some of the operators we construct do not have a classical counterpart (see below).

We briefly discuss some of the ideas of this construction. Let us assume for simplicity that $p\calO_F = \gothp^e$ for some integer $e\geq 1$, and that the inertial degree of $p$ in $F$ is equal to $f$. Let $\varpi_\gothp$ denote a choice of uniformizer for the completed local ring $\calO_{F_\gothp}$, and denote by $\FF$ the residue field of $F$ at $\gothp$.
Let $\tau_1, \dots, \tau_f: \FF \to \overline \FF$ denote the embeddings of $\FF$ into its algebraic closure, ordered so that $\sigma \circ \tau_i = \tau_{i+1}$, where $i$ stands for $i\pmod f$, and $\sigma$ denotes the arithmetic Frobenius. For an abelian scheme $A/S$ defined over an $\FF$-scheme $S$ and endowed with real multiplication by $\calO_F$, we denote by $\omega_{A/S,j}$ the direct summand of the sheaf of invariant differentials of $A/S$ on which $W(\FF)\subset \calO_{F_\gothp}$ acts through
$\tau_j$. The sheaf $\omega_{A/S,j}$ is a locally free $\calO_S$-module of rank $e$. 

The Pappas-Rapoport splitting model $\calM_\FF^\PR$ parameterizes isomorphism classes of  tuples $(A, \lambda, i, \underline \scrF =(\scrF_j^{(l)})_{j,l})/S$ where $A$ is a Hilbert-Blumenthal abelian scheme defined over an $\FF$-scheme $S$, endowed with a polarization $\lambda$ and a prime-to-$p$ level structure $i$, and for each $j=1,\dots,f$ we are given a filtration of $\omega_{A/S,j}$:
\[
0 = \scrF_j^{(0)} \subsetneq \scrF_j^{(1)} \subsetneq \cdots \subsetneq \scrF_j^{(e)} = \omega_{A/S, j}
\]
by $\calO_F$-stable $\calO_S$-subbundles. We further require that each subquotient of the above filtrations is a locally free $\calO_S$-module of rank one, and that it is annihilated by the action of $[\varpi_\gothp]$ (cf. Subsection \ref{S:moduli HBAS}). We point out that in general the splitting model depends upon the choice of ordering of the $p$-adic embeddings of $F$. This dependence disappears when considering characteristic $p$ fibers, so we ignore the issue in this introduction, but cf. Remark \ref{R:dependence-on-embeddings}.

We observe that when $A$ satisfies the Rapoport condition, so that the sheaf of invariant differentials $\omega_{A/S}$ is an invertible $(\calO_S\otimes\calO_F)$-module, there is exactly one possible filtration on each of the sheaves $\omega_{A/S,j}$, namely the one obtained by considering increasing powers of the uniformizer. Moreover, when $e=1$, we have $\calM^\PR_\FF=\calM^\DP_\FF=\calM^\Ra_\FF$.

The subquotients $\omega_{j,l}:=\scrF_j^{(l),\mathrm{univ}}/\scrF_j^{(l-1),\mathrm{univ}}$ of the universal filtration over $\calM_\FF^\PR$ define automorphic line bundles over the splitting model (cf. Notation \ref{N:M Sh A}), so that we have a good notion of Hilbert modular forms as (roughly) elements of
\[
H^0\big(\calM^\PR_\FF,\otimes_{j,l}\omega_{j,l}^{\otimes k_{j,l}}).
\]
To define suitable generalizations of the partial Hasse invariants, it is natural to look at the action of the Verschiebung map on the invariant differentials of the universal abelian scheme over $\calM_\FF^\PR$, and it is not difficult to see that the Verschiebung map preserves the filtrations $\scrF_j^{(l)}$ and hence induces homomorphisms:
\[
V_j^{(l)} : \scrF_j^{(l)} / \scrF_j^{(l-1)} \to \big(\scrF_{j-1}^{(l)} / \scrF_{j-1}^{(l-1)} \big)^{(p)}.
\]
Unfortunately, one can check that when $e>1$ the zero locus of $V_j^{(l)}$ on $\calM^\PR_\FF$ is not irreducible. For example, when $e=2$ and $f=1$, the splitting model is obtained by blowing up the Deligne-Pappas moduli space in correspondence of its singularities (which are isolated points). The zero locus of the Verschiebung maps coincide with the union of the zero set of the (unique) classical partial Hasse invariant, together with the exceptional $\PP^1$'s attached via the blow-ups. 

In order to find a good notion of partial Hasse invariant on the splitting model, one needs to separate these irreducible components. To do so, in Section \ref{S:generalized Hasse} we construct two types of ``\emph{generalized partial Hasse invariants}":
\begin{align*}
\textrm{when }l >1,\quad &
m_j^{(l)}: \scrF_j^{(l)} / \scrF_j^{(l-1)} \to \scrF_j^{(l-1)} / \scrF_j^{(l-2)}, \textrm{ and}
\\
\textrm{when }l =1,\quad &
\mathrm{Hasse}_j: \scrF_j^{(1)} \to \big( \omega_{A/S,j-1} / \scrF_{j-1}^{(e-1)} \big)^{(p)},
\end{align*}
where the first morphism is essentially given by multiplication by $[\varpi_\gothp]$ (cf. Construction \ref{C:m}), and the second morphism is given by first ``dividing by $[\varpi^{e-1}_\gothp]$", and then applying the Verschiebung map (cf. Construction \ref{C:Hasse_}). As Hilbert modular forms, the partial Hasse invariant $m_j^{(l)}$ has weight $\omega_{j,l}^{\otimes-1}\otimes\omega_{j,l-1}$, while the partial Hasse invariant $\mathrm{Hasse}_j$ has weight
$\omega_{j,1}^{\otimes-1}\otimes\omega_{j-1,e}^{\otimes p}$.

One can then see (cf. Lemma \ref{L:factorization of V}) that the map $V_j^{(l)}$ factors as the composition:
\[
(m_{j-1}^{(l+1)})^{(p)}
\circ \cdots \circ (m_{j-1}^{(e)})^{(p)} \circ \mathrm{Hasse}_j \circ m_j^{(2)} \circ \cdots \circ m_j^{(l)},
\]
which explains the existence of the many irreducible components of the zero locus of $V_j^{(l)}$.

When restricted to the Rapoport locus of the splitting model, the operators $\mathrm{Hasse}_j$ coincide with the classical partial Hasse invariants constructed by Goren and Andreatta-Goren. On the other hand, the operators $m_j^{(l)}$ are invertible on the Rapoport locus. (For instance, when $e=2$ and $f=1$, the zero set of the operator $m_1^{(2)}$ is given by the exceptional $\PP^1$'s mentioned earlier).

Using crystalline deformation theory, we prove in Theorem \ref{T:smoothness of ramified GO strata} that the generalized partial Hasse invariants cut out proper smooth divisors with simple normal crossings on $\calM^\PR_\FF$. We point out that our geometric construction of these divisors is new, and we expect them to carry interesting arithmetic information.  For instance, it seems plausible to expect that they have a natural global description, and it would be interesting to study their cohomology, as it is done in the series of works by Y. Tian and the second named author \cite{tian-xiao1, tian-xiao2, tian-xiao3}, in which $p$ is assumed to be unramified. We also remark that a related but different-looking stratification is considered by S. Sasaki \cite{sasaki}.

As an application of our constructions, we can extend to the ramified settings the aforementioned results of Emerton and the authors (cf. \cite{emerton-reduzzi-xiao}) in which Galois representations were attached to torsion classes in the coherent cohomology of Hilbert modular variety, under the assumption that $p$ was unramified in $F$. (The interest in associating Galois representations to such torsion classes originated from the groundbreaking work of F. Calegari and D. Geraghty \cite{CG1, CG2}; their results 
were the main motivation behind our construction of the generalized partial Hasse invariants).
More precisely, denote by $\Sh^{\PR,\tor}$ a toroidal compactification of the splitting model for the Hilbert modular Shimura variety\footnote{We will explain in Section 2 the difference between this Shimura variety and the moduli space described earlier.  It is safe for the reader to ignore the difference here.} for $F$ with level $\Gamma_{00}({\mathcal{N}})$ (for some prime-to-$p$ ideal ${\mathcal{N}}$ of $\calO_F$ which is sufficiently divisible). Let $\ttD$ denote the boundary divisor of $\Sh^{\PR,\tor}$ and denote by $\omega^\kappa$ the automorphic line bundle of paritious weight $\kappa$ on $\Sh^{\PR,\tor}$ (cf. Section \ref{S:geometric HMF} for the definitions of these line bundles). Fix an integer $m \geq 1$ and set $R_m := \calO / (\varpi)^m$ where $\calO$ is the ring of integer in a sufficiently large finite extension of $\QQ_p$, and $\varpi$ is a uniformizer. Let $\ttS$ denote a finite set of places of $F$ containing all primes dividing $p{\mathcal{N}}$ and all archimedean places. Let $G_{F, \ttS}$ denote the Galois group of a maximal algebraic extension of $F$ unramified outside $\ttS$. We can prove the following (cf. Corollary~\ref{C:main}):
\begin{theorem}
For any Hecke eigenclass $c\in H^{\bullet}(\Sh^{\PR, \tor}_{R_m}
,\omega_{R_m}^{\kappa}(-\mathtt{D}))$, there is a continuous
$R_{m}$-linear, two-dimensional pseudo-representation $\tau_{c}$ of the Galois
group $G_{F,\ttS}$ such that
\[
\tau_{c}(\operatorname*{Frob}\nolimits_{\gothq})= a_{\gothq}%
\]
for all finite primes $\gothq$ of $F$ outside $\ttS$, where $a_{\gothq}$ is the eigenvalue for the Hecke operator $T_\gothq$ acting on $c$.
\end{theorem}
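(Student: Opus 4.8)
The plan is to transport the strategy of \cite{emerton-reduzzi-xiao} to the ramified case, the two new inputs being the $g$ generalized partial Hasse invariants $\mathrm{Hasse}_j$ and $m_j^{(l)}$ of Section~\ref{S:generalized Hasse} --- which exist on $\calM^\PR_\FF$ even when $p$ ramifies --- and Theorem~\ref{T:smoothness of ramified GO strata}, to the effect that their zero loci form a smooth simple normal crossings divisor. They replace the classical partial Hasse invariants of \cite{AG}, whose insufficiency in the ramified case was exactly what blocked \cite{emerton-reduzzi-xiao} there. The proof proceeds in three stages: dispose of degree-zero eigenclasses by lifting to characteristic zero; reduce the eigensystems carried by higher-degree eigenclasses, by a d\'evissage along the Hasse divisors, to eigensystems on the $0$-dimensional deepest Goren--Oort stratum; and attach $2$-dimensional Galois representations to the resulting classical automorphic forms, interpolating to produce $\tau_c$ modulo $(\varpi)^m$.

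\emph{Degree zero.} Since the generalized partial Hasse invariants are canonical, multiplication by them commutes with the prime-to-$p$ Hecke correspondences, so it is Hecke-equivariant and preserves the $T_\gothq$-eigenvalues. By Lemma~\ref{L:factorization of V}, the product $\prod_{j,l} V_j^{(l)}$ of the Verschiebung operators --- essentially the total Hasse invariant --- is a product of the $\mathrm{Hasse}_j$, the $m_j^{(l)}$, and their Frobenius twists, and is a section of $\bigotimes_{j,l}\omega_{j,l}^{\otimes(p-1)}$, which is a power of the ample Hodge bundle $\det\omega$ (the determinant of the sheaf of invariant differentials). Multiplying a class in $H^0(\Sh^{\PR,\tor}_{R_m},\omega^\kappa_{R_m}(-\ttD))$ by a large power of it moves it to a weight $\kappa'$ for which $H^{>0}(\Sh^{\PR,\tor},\omega^{\kappa'}(-\ttD))=0$ over $\calO$, by the usual high-weight (Kodaira--Kawamata--Viehweg-type) vanishing on the proper smooth $\Sh^{\PR,\tor}$; then $H^0(\Sh^{\PR,\tor},\omega^{\kappa'}(-\ttD))$ is finite free over $\calO$ and base-changes to $R_m$, so the eigensystem lifts to a paritious-weight cuspidal Hilbert eigenform over $\overline{\QQ}_p$, which carries a $2$-dimensional Galois representation by Carayol, Wiles, Taylor, Blasius--Rogawski, and others.

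\emph{Higher degrees.} Here the preceding device is empty --- enough multiplications land one in the higher coherent cohomology of an ample weight, which vanishes --- so the stratification is essential. Let $Z\subset\Sh^{\PR,\tor}_{R_m}$ be the union of the $g$ Hasse zero divisors; by Theorem~\ref{T:smoothness of ramified GO strata} it is simple normal crossings with smooth strata $Z_I$, and its open complement (away from the boundary $\ttD$) is the ordinary Rapoport locus, which is affine --- it coincides with the ordinary locus of $\calM^\DP$, the complement of the ample zero divisor of the determinant of the Hasse--Witt matrix on the minimal compactification. Hence the higher cohomology of $\omega^\kappa_{R_m}$ on this open vanishes, and the local-cohomology long exact sequence for $(\Sh^{\PR,\tor}_{R_m},Z)$, together with the \v{C}ech/Mayer--Vietoris spectral sequence for $H^\bullet_Z$ and the Koszul computation of the local cohomology sheaves along the $Z_I$, exhibits every Hecke eigensystem in $H^\bullet(\Sh^{\PR,\tor}_{R_m},\omega^\kappa_{R_m}(-\ttD))$ as one occurring in the coherent cohomology of the lower-dimensional smooth proper strata $Z_I$ with coefficients in suitable automorphic line bundles. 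On each $Z_I$ the remaining Hasse invariants restrict to a simple normal crossings divisor whose complement --- being closed in the affine ordinary locus --- is again affine, so one iterates; by descending induction on $\dim Z_I$ the eigensystem of $c$ occurs in $H^0$ of the $0$-dimensional deepest stratum $Z_{\{1,\dots,g\}}$. One then identifies that stratum --- a finite $R_m$-scheme --- by Dieudonn\'e theory at $p$ (its $p$-divisible groups being rigid) with the reduction of a finite $\calO$-scheme, namely a double-coset space for a definite quaternion algebra over $F$, so that its $\overline{\QQ}_p$-points are classical quaternionic eigenforms; these correspond, under Jacquet--Langlands, to paritious-weight cuspidal Hilbert eigenforms carrying $2$-dimensional Galois representations. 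In every case, letting $\TT$ be the reduced, $\calO$-flat Hecke algebra generated by the $T_\gothq$ ($\gothq\notin\ttS$) on the relevant finite free $\calO$-module, the functions $\gothq\mapsto\mathrm{tr}\,\rho(\Frob_\gothq)=T_\gothq\bmod\gothq$ and $\gothq\mapsto\det\rho(\Frob_\gothq)$ --- where $\rho$ ranges over the classical Galois representations indexed by the minimal primes $\gothq$ of $\TT$ --- glue, by continuity (compactness of $G_{F,\ttS}$ and of $\TT$, density of the Frobenii) and the standard gluing of pseudo-representations, to a continuous $2$-dimensional pseudo-representation $\tau\colon G_{F,\ttS}\to\TT$ with $\tau(\Frob_\gothq)=T_\gothq$; composing $\tau$ with $\TT\twoheadrightarrow\TT\otimes_\calO R_m\twoheadrightarrow R_m$ (the eigensystem of $c$) yields $\tau_c$.

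\emph{Main obstacle.} The crux is the ``higher degrees'' d\'evissage, and it is precisely what the bulk of the paper is built to supply: in the ramified case one first needs $g$ partial Hasse invariants to exist at all --- including the genuinely new $m_j^{(l)}$, which have no classical analogue and carry the ``small'' weights $\omega_{j,l}^{\otimes-1}\otimes\omega_{j,l-1}$ --- and one then needs their combined zero locus to be a smooth simple normal crossings divisor, which is Theorem~\ref{T:smoothness of ramified GO strata}, proved via crystalline deformation theory. Granting these, what remains is bookkeeping: controlling the paritious weights and the boundary $\ttD$ through the d\'evissage, checking that the exceptional-divisor-type strata cut out by the $m_j^{(l)}$ (for instance the $\PP^1$'s of the blow-ups when $e=2$) enter the induction on the same footing as the classical Goren--Oort strata, and the explicit identification of the deepest stratum in the ramified setting --- all adaptations of the unramified arguments of \cite{emerton-reduzzi-xiao}. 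A minor standing input is that $\Sh^{\PR,\tor}$ is proper and smooth over $\calO$, so that base change, Serre duality, and high-weight vanishing for its coherent cohomology are available.
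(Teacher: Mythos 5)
Your framing (use the generalized partial Hasse invariants plus Theorem~\ref{T:smoothness of ramified GO strata} to rerun the strategy of \cite{emerton-reduzzi-xiao}) is the right one, but the engine of your ``higher degrees'' step is not the paper's, and it contains a genuine gap. The paper never pushes an eigensystem down to the zero-dimensional stratum: it builds an admissible, Koszul-type resolution of $\omega^{(\bfk,w)}_{R_m}(-\ttD)$ whose terms are \emph{favorable} sheaves supported on the various strata $Z_\bfM$ (Theorem~\ref{T:main}), where favorability --- vanishing of higher cohomology and Galois-typeness of $H^0$ --- is arranged by twisting with the liftings $\tilde h_\bfM$ and $\tilde b_\bfT$ into the positive cone of Lemma~\ref{L:numerics} and invoking ampleness on the minimal compactification (Lemmas~\ref{L:relative ample}, \ref{L:ampleness}, Proposition~\ref{P:ampleness}). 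Galois-typeness of $H^0$ on a stratum is then inherited, by descending induction on dimension, from $H^0$ of the \emph{full} space at cohomologically trivial weights, where it comes from lifting to characteristic zero and the classical Galois representations attached to cuspidal Hilbert eigenforms. Your route instead needs (i) that a local-cohomology d\'evissage land every eigensystem in $H^0$ of the deepest stratum, and (ii) that this stratum be identified, via Dieudonn\'e theory and Jacquet--Langlands, with a definite quaternionic double-coset set. Point (ii) is a substantial unproved input: no such ``geometric Jacquet--Langlands'' description of these strata is known in the ramified splitting-model setting (the paper only says a global description ``seems plausible to expect''; the results of \cite{tian-xiao1, tian-xiao2, tian-xiao3} you would lean on assume $p$ unramified), and the deepest stratum here lies inside the exceptional loci cut out by the $m_{\varpi_i,j}^{(l)}$, not just over superspecial points, so its identification is not a routine rigidity argument. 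Point (i) is also unjustified: the long exact sequences only place the eigensystem in \emph{some} degree of \emph{some} stratum with conormal-twisted weights; forcing it into $H^0$ of ever deeper strata requires exactly the weight-positivity and vanishing machinery you dismiss as bookkeeping, and the natural direction of inheritance at high weights is a surjection from $H^0$ of a bigger stratum onto $H^0$ of a smaller one --- Galois type flows from the full Shimura variety down to the strata, not from the deepest stratum up.

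Two smaller inaccuracies in your degree-zero paragraph: multiplying by the total Hasse invariant only moves the weight in the parallel direction, and on the splitting model the parallel-weight bundle is merely semi-ample (pulled back from the Deligne--Pappas minimal compactification); the vanishing needed to lift to characteristic zero also requires the $\pi$-relatively ample $\underline{\mathrm{ex}}$-direction built from the individual weights of the generalized invariants (Lemmas~\ref{L:relative ample} and \ref{L:ampleness}). Moreover Kodaira--Kawamata--Viehweg-type vanishing is not available in characteristic $p$ or over $R_m$; the paper's vanishing is a Serre-type statement on the minimal compactification. Neither of these affects the overall architecture, but the quaternionic identification of the deepest stratum does: as written, your proof of the higher-degree case rests on an input that this paper neither proves nor uses.
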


The theorem is proved essentially the same way as in \cite{emerton-reduzzi-xiao}, and using the following additional facts: (1) that there is ``sufficient supply" of partial Hasse invariants on the splitting model of the Hilbert modular Shimura variety, and their weights generate a positive open cone containing an ample automorphic line bundle; (2) that the divisors attached to the generalized partial Hasse invariants are proper and smooth with simple normal crossings.

The paper is organized as follows: in Section 2 we recall a few facts about Pappas-Rapoport splitting models for Hilbert modular schemes, and we prove the existence of a canonical Kodaira-Spencer filtration on their cotangent bundles. In Section 3 we define the ``generalized partial Hasse invariants", we prove that they cut out proper and smooth divisors with simple normal crossings, and we construct suitable canonical invertible sections $b_\tau$ over such divisors. In Section 4 we apply the results from Section 3 to prove (in the ramified settings) the above theorem on the existence of Galois representations attached to torsion Hilbert modular eigenclasses.

\subsection*{Acknowledgements}
We thank Matthew Emerton, Yichao Tian, and Xinwen Zhu for very helpful discussions during the preparation of this paper. We thank Georgios Pappas and Shu Sasaki for their interests in our results.

\section{Pappas-Rapoport splitting models for Hilbert modular schemes}

We begin by recalling the construction, due to Pappas and  Rapoport, of the splitting models for Hilbert modular schemes, following Pappas-Rapoport \cite{pappas-rapoport} and Sasaki \cite{sasaki}.  We assume that the reader is familiar with the construction of integral models over $\ZZ_p$ of Hilbert modular varieties attached to a totally real field $F$ in which the prime $p$ is unramified (see for example \cite{emerton-reduzzi-xiao}). We will try to make our presentation self-contained. After giving the basic definitions, we will focus on the new phenomena that occur when $p$ ramifies in $F$. 

\subsection{Setup}
Let $\overline \QQ$ denote the algebraic closure of $\QQ$ inside $\CC$.
We fix a rational prime $p$ and a field isomorphism $\overline \QQ_p \simeq \CC$.
Base changes of algebras and schemes will often be denoted by a subscript, if no confusion arises.

Let $F$ be a totally real field of degree $g>1$, with ring of integers $\calO_F$ and group of totally positive units $\calO_F^{\times, +}$.
Denote by $\gothd:=\gothd_F$ the different ideal of $F/ \QQ$.
Let $\gothC: = \{ \gothc_1, \dots, \gothc_{h^+}\}$ be a fixed set of representatives for the elements of the narrow class group of $F$, chosen to be coprime to $p$.

We fix a large enough coefficient field $E$ which is a finite Galois extension of $\QQ_p$ inside $\overline \QQ_p$. We require that $E$ contains the images of all $p$-adic embeddings of $F(\sqrt u; u \in \calO_F^{\times, +})$ into $\overline \QQ_p$.\footnote{The additional square roots of units are introduced for a technical reason; see Notation~\ref{N:M Sh A}.}
Let $\calO$ denote the valuation ring of $E$; choose a uniformizer $\varpi$ of $\calO$ and denote by $\FF$ the residue field.

We write the prime ideal factorization of $p \calO_F$ as $\gothp_1^{e_1} \cdots \gothp_r^{e_r}$, where $r$ and $e_i$ are positive integers.
Set $\FF_{\gothp_i} = \calO_F / \gothp_i$ and $f_i = [\FF_{\gothp_i}: \FF_p]$.
Let $\overline \FF_p$ denote the residue field of $\overline \ZZ_p$, and let $\sigma$ denote the arithmetic Frobenius on $\overline \FF_p$.
We label the embeddings of $\FF_{\gothp_i}$ into $\overline \FF_p$ (or, equivalently, into $\FF$) as $\{\tau_{\gothp_i, 1}, \dots, \tau_{\gothp_i, f_i}\}$ so that $\sigma \circ \tau_{\gothp_i, j} = \tau_{\gothp_i, j+1}$ for all $j$, with the convention that $\tau_{\gothp_i, j}$ stands for $\tau_{\gothp_i, j \pmod {f_i}}$.
For each $\gothp_i$, denote by $F_{\gothp_i}$ the completion of $F$ for the $\gothp_i$-adic topology. Let  $W(\FF_{\gothp_i})$ denote the ring of integers of the maximal subfield of ${F_{\gothp_i}}$ unramified over $\QQ_p$. The residue field of $W(\FF_{\gothp_i})$ is identified with $\FF_{\gothp_i}$. Each embedding $\tau_{\gothp_i, j}:\FF_{\gothp_i} \to \FF$ of residue fields induces an embedding $W(\FF_{\gothp_i}) \to \calO$ which we denote by the same symbol.

Let $\Sigma$ denote the set of embeddings of $F$ into $\overline \QQ$, which is further identified with the set of embeddings of $F$ into $\CC$ or $\overline \QQ_p$ or $E$.
Let $\Sigma_{\gothp_i}$ denote the subset of $\Sigma$ consisting of all the $p$-adic embeddings of $F$ inducing the $p$-adic place $\gothp_i$.
For each $i$ and each $j =1, \dots, f_i$, there are exactly $e_i$ elements in $\Sigma_{\gothp_i}$ that induce the embedding $\tau_{\gothp_i, j}:W(\FF_{\gothp_i}) \to \calO$; we label these elements as $\tau_{\gothp_i, j}^1, \dots, \tau_{\gothp_i, j}^{e_i}$.  \emph{There is no canonical choice of such labeling, but we fix one for the rest of this paper}.

We choose a uniformizer $\varpi_i$ for the ring of integers $\calO_{F_{\gothp_i}}$ of $F_{\gothp_i}$.  Let $E_{\gothp_i}(x)$ denote the minimal polynomial of $\varpi_i$ over the ring $W(\FF_{\gothp_i})$: it is an Eisenstein polynomial.
Using the embedding $\tau_{\gothp_i,j}$, we can view this polynomial as an element $E_{\gothp_i,j}(x) := \tau_{\gothp_i,j}(E_{\gothp_i}(x))$  of $\calO[x]$.  We have:
\[
E_{\gothp_i,j}(x) = (x - \tau_{\gothp_i, j}^1(\varpi_i)) \cdots (x - \tau_{\gothp_i, j}^{e_i}(\varpi_i))
\quad.
\]

\subsection{Pappas-Rapoport splitting models}
\label{S:moduli HBAS}
Let $S$ be a locally Noetherian $\calO$-scheme. A \emph{Hilbert-Blumenthal abelian $S$-scheme} (HBAS) with real multiplication by $\calO_F$ is the datum of an abelian $S$-scheme $A$ of relative dimension $g$, together with a ring embedding $ \calO_F \to \End_SA$.
We have natural direct sum decompositions
\[
\omega_{A/S} = \bigoplus_{i=1}^r \omega_{A/S, \gothp_i} =  \bigoplus_{i=1}^r \bigoplus_{j = 1}^{f_i}\omega_{A/S, \gothp_i, j},
\]
\[
\calH_\dR^1(A/S) = \bigoplus_{i =1}^r \bigoplus_{j = 1}^{f_i}
\calH_\dR^1(A/S)_{\gothp_i, j},
\]
where $W(\FF_{\gothp_i}) \subseteq \calO_{F_{\gothp_i}}$ acts on $\omega_{A/S, \gothp_i,j}$ (resp. on $\calH_\dR^1(A/S)_{\gothp_i, j}$) via $\tau_{\gothp_i, j}$. Moreover, since
$\calH^1_\dR(A / S)$ is a locally free $\calO_F \otimes_\ZZ \calO_S$-module of rank two (cf. \cite[Lemme 1.3]{rapoport}), 
each $\calH_\dR^1(A/S)_{\gothp_i, j}$ is a locally free sheaf over $S$ of rank $2e_i$.

Let $\gothc \in \gothC$ be a fractional ideal of $F$ (coprime to $p$), with cone of positive elements $\gothc^+$.
We say a HBAS $A$ over $S$ is \emph{$\gothc$-polarized} if
there is an $S$-isomorphism $\lambda: A^\vee \to A \otimes _{\calO_F} \gothc$ of HBAS's under which the symmetric elements (resp. the polarizations) of $\Hom_{\calO_F}(A, A^\vee)$ correspond to the elements of $\gothc$ (resp. $\gothc^+$) in $\Hom_{\calO_F}(A, A \otimes_{\calO_F} \gothc)$.

We remark that for a $\gothc$-polarized HBAS $(A,\lambda)/S$, each $\omega_{A/S, \gothp_i, j}$ is a \emph{locally free sheaf over $S$ of rank $e_i$}.
This fact is proved, for example, in \cite[Proposition~2.11]{vollaard}.  Or we can see this directly by looking at a closed point on each connected component of $S$ and hence reduce to the case when $S =\Spec k$, with $k$ an algebraically closed field.
The polarization $\lambda$ induces a non-degenerate symplectic pairing on each $H^1_\dR(A/S)_{\gothp_i, j}$, and $\omega_{A/S,\gothp_i,j}$ is an isotropic subspace for such pairing. The dimension of $\omega_{A/S}$ is half of the dimension of $H^1_\dR(A/S)$.  So the dimension of each $\omega_{A/S,\gothp_i,j}$ is forced to be $e_i$.

Let $\mathcal{N}$ be a non-zero proper ideal of $\calO_F$ coprime to $p$. A \emph{$\Gamma_{00}({\mathcal{N}})$-level structure} on a HBAS $A$ over $S$ is an $\calO_F$-linear closed embedding of $S$-schemes
$ i: \mu_{\mathcal{N}} \to A$, where $\mu_{\mathcal{N}}:=(\gothd^{-1} \otimes_\ZZ \GG_m)[{\mathcal{N}}]$ is the Cartier dual of the constant $S$-group scheme $\calO_F/\mathcal{N}$.

Fix a non-zero proper ideal ${\mathcal{N}}$ of $\calO_F$ coprime to $p$, and assume that $\mathcal{N}$ does not divide neither $2\calO_F$ nor $3\calO_F$. Denote by $\underline \calM^\PR_\gothc = \underline \calM^\PR_{\gothc, {\mathcal{N}}}$ the functor that assigns to a locally Noetherian $\calO$-scheme $S$ the set of isomorphism classes of tuples $(A, \lambda, i, \underline \scrF)$, where:
\begin{itemize}
\item
$(A, \lambda)$ is a $\gothc$-polarized HBAS over $S$ with real multiplication by $\calO_F$,
\item
$i$ is a $\Gamma_{00}({\mathcal{N}})$-level structure,
\item $\underline \scrF$ is a collection $(\scrF_{\gothp_i, j}^{(l)})_{i = 1, \dots, r; j = 1, \dots, f_i;l = 0, \dots, e_i}$ of locally free sheaves over $S$ such that
\begin{itemize}
\item $0 =\scrF_{\gothp_i, j}^{(0)} \subsetneq \scrF_{\gothp_i, j}^{(1)} \subsetneq \cdots \subsetneq
\scrF_{\gothp_i, j}^{(e_i)} = \omega_{A/S, \gothp_i, j}$ and each $\scrF_{\gothp_{i, j}}^{(l)}$ is stable under the $\calO_F$-action (not just the action of $W(\FF_{\gothp_i})$),
\item each subquotient $\scrF_{\gothp_i, j}^{(l)} / \scrF_{\gothp_i, j}^{(l-1)}$ is a locally free $\calO_S$-module of rank one (and hence $\rank_{\calO_S} \scrF_{\gothp_i, j}^{(l)} = l$), and
\item the action of $\calO_F$ on each subquotient $\scrF_{\gothp_i, j}^{(l)} / \scrF_{\gothp_i, j}^{(l-1)}$ factors through $\calO_F \xrightarrow{\tau_{\gothp_i, j}^l} \calO$, or equivalently, this subquotient is annihilated by $[\varpi_i] - \tau_{\gothp_i,j}^l(\varpi_i)$, where $[\varpi_i]$ denotes the action of $\varpi_i$ as an element of $\calO_{F_{\gothp_i}}$.
\end{itemize}
\end{itemize}
We use $\underline \calM_\gothc^\DP$ to denote the functor obtained from $\underline \calM_\gothc^\PR$ by forgetting the filtrations $\underline \scrF$.

Both $\underline \calM_\gothc^\PR$ and $\underline \calM_\gothc^\DP$ carry an action of $\calO_F^{\times, +}$:
\begin{equation}
\label{E:action <u>}
\textrm{for }u \in \calO_F^{\times,+}, \quad \langle u \rangle: (A, \lambda, i, \underline \scrF) \longmapsto (A, u\lambda, i, \underline \scrF).
\end{equation}
The subgroup $(\calO_{F, {\mathcal{N}}}^\times)^2$ of $\calO_F^{\times,+}$ acts trivially on both spaces, where $\calO_{F, {\mathcal{N}}}^\times$ denotes the group of units that are congruent to $1$ modulo ${\mathcal{N}}$.

\begin{remark}
\label{R:dependence-on-embeddings}
\begin{enumerate}
\item
The definition of the functor $\underline \calM_\gothc^\PR$ on the category of locally Noetherian $\calO$-schemes depends upon the fixed choice of an ordering for the set of embeddings $\tau_{\gothp_i,j}^l$.
Although it appears that the base change of the functor $\underline \calM_\gothc^\PR$ to $\FF$ does not depend on such choice anymore, its relation to Hilbert modular forms is given through its integral model, and hence automorphic properties still depend on the ordering.

Related to this issue, the corresponding moduli space $\calM_\gothc^\DP$ may be descended to $\ZZ_p$, however, the moduli spacce $\calM_{\gothc}^\PR$ needs to be defined over the composite of all $\calO_{F_{\gothp}}$'s.

\item
The idea of Pappas and Rapoport (\cite{pappas-rapoport}) of introducing the filtration $\underline \scrF$ to define $\underline \calM_\gothc^\PR$, and the consequent existence of the forgetful morphism $\pi:\underline \calM_\gothc^\PR \to \underline \calM_\gothc^\DP$, is modeled on the following resolution of the affine Grassmannian for $\GL_2$
\[
\Gr_1 \tilde \times \Gr_1 \tilde \times \cdots \tilde \times \Gr_1 \longto \Gr_{\leq e},
\]
where the left hand side is the twisted product of $e$ copies of Grassmannians (projective lines in this case), and the right hand side is an affine Grassmannian variety.
\end{enumerate}

\end{remark}

We now discuss some properties of the moduli functors just introduced.  The following result is well known (cf. Pappas-Rapoport \cite{pappas-rapoport} and Sasaki \cite{sasaki}).

\begin{proposition}
\label{P:basic property of integral models}
\begin{itemize}
\item[(1)]
The functor $\underline \calM_\gothc^\mathrm{DP}$ (resp. $\underline \calM_\gothc^\mathrm{PR}$) is represented by an $\calO$-scheme of finite type that we denote $\calM_\gothc^\DP$ (resp. $\calM_\gothc^\PR$).
\item[(2)]
The moduli space  $\calM_\gothc^\mathrm{DP}$ is normal.
Let $\calM_\gothc^\mathrm{Ra}$ denote its smooth locus, called the \emph{Rapoport locus}.  Then $\calM_\gothc^\mathrm{Ra}
$ is the subscheme of $\calM_\gothc^\mathrm{DP}$ parameterizing those HBAS for which the cotangent space at the origin $\omega_{A/S}$ is a locally free $(\calO_{F} \otimes_{\ZZ} \calO_S)$-module of rank one.
\item
[(3)]
The natural morphism $ \pi:\calM_\gothc^\mathrm{PR} \to \calM_\gothc^\mathrm{DP}$ is projective, and it induces an isomorphism of a subscheme of $ \calM_\gothc^\mathrm{PR}$ onto $\calM_\gothc^\mathrm{Ra}$.
\item
[(4)]
If ${\mathcal{N}}$ is sufficiently divisible, the actions of $\calO_F^{\times, +} / (\calO_{F,{\mathcal{N}}}^\times)^2$ on $\calM_\gothc^\PR$ and $\calM_\gothc^\DP$ are free on geometric points. In particular, the corresponding quotients $ \Sh_\gothc^\PR$ and $ \Sh_{\gothc}^\DP$ are $\calO$-schemes of finite type, and the quotient morphisms are \'etale.
\end{itemize}

\end{proposition}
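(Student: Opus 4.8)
The plan is to handle the four assertions in turn, importing from \cite{deligne-pappas} the one substantive ingredient (the local structure of $\calM_\gothc^\DP$) and obtaining everything else by standard moduli-theoretic manipulations. For (1): the functor $\underline\calM_\gothc^\DP$ is a rigidified PEL moduli problem --- the hypothesis that $\mathcal N$ divides neither $2\calO_F$ nor $3\calO_F$ kills the automorphisms of the parametrized objects --- so it is represented by a quasi-projective $\calO$-scheme $\calM_\gothc^\DP$ of finite type, carrying the ample Hodge bundle $\det\omega_A$; this is already in \cite{deligne-pappas} (and classically for the unramified analogue). To pass to $\underline\calM_\gothc^\PR$ I would construct $\pi$ as an iterated closed subscheme of a tower of projective bundles over $\calM_\gothc^\DP$: having fixed $\scrF_{\gothp_i,j}^{(0)}\subsetneq\cdots\subsetneq\scrF_{\gothp_i,j}^{(l-1)}$, the datum of $\scrF_{\gothp_i,j}^{(l)}$ is that of a line subbundle of $\omega_{A/S,\gothp_i,j}/\scrF_{\gothp_i,j}^{(l-1)}$ (a point of a $\PP^{e_i-l}$-bundle), subject to the closed condition that this line be annihilated by $[\varpi_i]-\tau_{\gothp_i,j}^l(\varpi_i)$. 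Running over all $i$, $j$ and $l=1,\dots,e_i$ exhibits $\pi\colon\calM_\gothc^\PR\to\calM_\gothc^\DP$ as projective, so $\calM_\gothc^\PR$ is a scheme of finite type over $\calO$, proving (1) and the projectivity clause of (3); this is the ``twisted product of Grassmannians'' picture of Remark~\ref{R:dependence-on-embeddings}(2). The remaining clause of (3) is that over $\calM_\gothc^\Ra$ the filtration $\underline\scrF$ is unique: if $\omega_{A/S}$ is invertible over $\calO_F\otimes_\ZZ\calO_S$ then each $\omega_{A/S,\gothp_i,j}$ is free of rank one over $\calO_{F_{\gothp_i}}\otimes_{W(\FF_{\gothp_i}),\tau_{\gothp_i,j}}\calO_S\cong\calO_S[x]/E_{\gothp_i,j}(x)$, and in such a cyclic module the conditions on the subquotients force, by descending induction on $l$, that $\scrF_{\gothp_i,j}^{(l)}$ be the image of multiplication by $\prod_{l'>l}\bigl(x-\tau_{\gothp_i,j}^{l'}(\varpi_i)\bigr)$ (one checks, reducing to a geometric point and using Nakayama, that the kernel of $[\varpi_i]-\tau_{\gothp_i,j}^{l}(\varpi_i)$ is already a subbundle of the expected corank). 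Hence $\underline\scrF$ exists and is unique over the Rapoport locus, so $\pi$ restricts to an isomorphism of the open subscheme $\pi^{-1}(\calM_\gothc^\Ra)$ of $\calM_\gothc^\PR$ onto $\calM_\gothc^\Ra$.

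For (2): normality of $\calM_\gothc^\DP$ is where the real input lies, and I would quote the local model analysis of Deligne--Pappas. \'Etale-locally on $\calM_\gothc^\DP$ the completed local ring is a power series ring over a product (over the primes $\gothp_i$) of rings cut out inside $W(\FF_{\gothp_i})[[\underline x]]$ by the relations expressing that the characteristic polynomial of $[\varpi_i]$ acting on $\omega_A$ equals $E_{\gothp_i}(x)$. Deligne and Pappas show this local model is $\calO$-flat and a local complete intersection --- hence Cohen--Macaulay, so Serre's condition $(S_2)$ holds --- and that its non-smooth locus lies in the fibres at primes ramified in $F$ and has codimension $\ge 2$ there (the codimension-two statement recalled in the introduction). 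Since the generic fibre and the fibres at unramified primes are smooth, the singular locus of $\calM_\gothc^\DP$ has codimension $\ge 2$, so $(R_1)$ holds, and normality follows by Serre's criterion. The description of the smooth locus $\calM_\gothc^\Ra$ as the locus where $\omega_{A/S}$ is invertible over $\calO_F\otimes_\ZZ\calO_S$ is part of the same picture (cf. \cite{rapoport}): on that locus the local model degenerates to the smooth Rapoport model, and conversely an invertible $\calO_F\otimes_\ZZ\calO_S$-structure on $\omega_{A/S}$ pins the defining equations down to the smooth ones.

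For (4): the subgroup $(\calO_{F,\mathcal N}^\times)^2$ acts trivially because for $\epsilon\in\calO_{F,\mathcal N}^\times$ the automorphism $[\epsilon]$ of $A$ fixes $i$ (as $\epsilon\equiv 1\pmod{\mathcal N}$) and the $\calO_F$-stable filtration $\underline\scrF$, while scaling $\lambda$ by $\epsilon^2$, so it identifies $(A,\lambda,i,\underline\scrF)$ with $(A,\epsilon^2\lambda,i,\underline\scrF)$. For the reverse inclusion, a geometric fixed point of $\langle u\rangle$ supplies an $\calO_F$-linear $\phi\in\mathrm{Aut}(A)$ with $\phi\circ i=i$, $\phi^*\underline\scrF=\underline\scrF$, and $\phi^\dagger\phi\in\calO_F^{\times,+}$ for the positive-definite Rosati involution $\dagger$ of $\lambda$. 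By the standard rigidity lemma for Hilbert modular varieties --- which exploits that $\{\phi\in\End_{\calO_F}(A):\phi^\dagger\phi\in\calO_F^{\times,+}\}$ is finite modulo the central units $\calO_F^\times$ (after normalizing $\phi^\dagger\phi$ into a finite set of representatives of $\calO_F^{\times,+}/(\calO_F^\times)^2$, the element $\phi$ is confined to a compact unitary group, hence to a finite subset of the lattice $\End_{\calO_F}(A)$) --- for $\mathcal N$ sufficiently divisible any such $\phi$ preserving the $\Gamma_{00}(\mathcal N)$-structure must lie in $\calO_{F,\mathcal N}^\times$, whence $u\in(\calO_{F,\mathcal N}^\times)^2$ (cf. \cite{deligne-pappas}, \cite{rapoport}). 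Thus the group $\calO_F^{\times,+}/(\calO_{F,\mathcal N}^\times)^2$ --- finite because $(\calO_{F,\mathcal N}^\times)^2$ has finite index in $\calO_F^{\times,+}$ by Dirichlet's unit theorem --- acts freely on geometric points of $\calM_\gothc^\PR$ and of $\calM_\gothc^\DP$; a free action of a finite group on such a quasi-projective $\calO$-scheme admits a quotient which is again a quasi-projective $\calO$-scheme of finite type, with finite \'etale quotient morphism, producing $\Sh_\gothc^\PR$ and $\Sh_\gothc^\DP$.

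The main obstacle is part (2): normality rests entirely on the explicit local-model computation of Deligne and Pappas (flatness, the local complete intersection property, and the codimension-$\ge 2$ bound on the singular locus), which I would cite rather than reprove. A secondary delicate point is making ``sufficiently divisible'' in (4) uniform over the finitely many geometric points with extra endomorphisms, again handled by the classical references; parts (1) and (3) are a formal Grassmannian-tower construction.
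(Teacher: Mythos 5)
Your proposal is correct and follows essentially the same route as the paper: representability of $\calM_\gothc^\DP$ and its normality/Rapoport-locus description are quoted from Deligne--Pappas, $\calM_\gothc^\PR$ is realized as a closed subscheme of a (relative) Grassmannian-type projective construction over $\calM_\gothc^\DP$ giving projectivity of $\pi$, the isomorphism over $\calM_\gothc^\Ra$ comes from uniqueness of the filtration $\underline\scrF$ there, and (4) is the rigidity argument of \cite[2.1.1]{emerton-reduzzi-xiao}. The only difference is that you spell out details (the flag-bundle tower, the uniqueness induction, the local-model and rigidity sketches) where the paper simply cites its references.
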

\begin{proof}
The representability of $\underline \calM_\gothc^\mathrm{DP}$ follows from Deligne-Pappas \cite{deligne-pappas}. The representability of $\underline \calM_\gothc^\mathrm{PR}$ and the projectivity of $\pi$ follow from the fact that $\underline \calM_\gothc^\mathrm{PR}$ is relatively representable over $\underline \calM_\gothc^\mathrm{DP}$ by a closed subscheme of a Grassmannian.\footnote{This uses the fact that for any $S$-point $(A, \lambda, i)$ of $\calM_\gothc^\DP$, each sheaf of differentials $\omega_{A/S, \gothp_i,j}$ is locally free of rank $e_i$.}
The second statement in the proposition is proved in \cite{deligne-pappas}.  The third statement follows since over $\calM_\gothc^\mathrm{Ra}$ we have a unique choice for the filtration $\underline \scrF$. The last statement can be proved in exactly the same way as in \cite[2.1.1]{emerton-reduzzi-xiao}.
\end{proof}

\begin{lemma}
\label{L:wedge of HdR}
Let $(A, \lambda)$ be a $\gothc$-polarized HBAS defined over a locally Noetherian scheme $S$. There is a canonical isomorphism:
\[
\wedge^2_{\calO_F \otimes_\ZZ \calO_S}\calH^1_\dR(A / S) \cong \gothc \gothd^{-1}\otimes_{\ZZ}\calO_S.
\]
induced by the polarization $\lambda$.
\end{lemma}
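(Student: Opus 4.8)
The plan is to construct the isomorphism first in the case of the universal abelian scheme over the relevant integral model and then descend, but in fact the cleanest route is to work directly. First I would recall the two general facts that make the statement plausible: (a) for any abelian scheme $A/S$, the polarization $\lambda: A^\vee \to A\otimes_{\calO_F}\gothc$ together with the canonical perfect duality between $\calH^1_\dR(A/S)$ and $\calH^1_\dR(A^\vee/S)$ furnishes an $\calO_F\otimes\calO_S$-bilinear alternating pairing $\langle\ ,\ \rangle_\lambda\colon \calH^1_\dR(A/S)\times\calH^1_\dR(A/S)\to \gothc\gothd^{-1}\otimes_\ZZ\calO_S$ (the twist by $\gothd^{-1}$ appears because, over $\calO_F$, the trace-compatible pairing on a self-dual-up-to-$\gothc$ module is $\gothc\gothd^{-1}$-valued rather than $\calO_F$-valued); and (b) by \cite[Lemme 1.3]{rapoport}, $\calH^1_\dR(A/S)$ is locally free of rank two over $\calO_F\otimes_\ZZ\calO_S$, hence its second exterior power over $\calO_F\otimes_\ZZ\calO_S$ is an invertible $\calO_F\otimes_\ZZ\calO_S$-module. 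The pairing $\langle\ ,\ \rangle_\lambda$ then induces an $\calO_F\otimes\calO_S$-linear map $\wedge^2_{\calO_F\otimes\calO_S}\calH^1_\dR(A/S)\to \gothc\gothd^{-1}\otimes_\ZZ\calO_S$, and the content of the lemma is that this map is an isomorphism.

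The key steps, in order, would be: (1) Set up the pairing precisely. Use the canonical isomorphism $\calH^1_\dR(A^\vee/S)\cong \calH^1_\dR(A/S)^\vee(?)$ coming from the autoduality of de Rham cohomology of an abelian scheme (i.e. the polarization form on $H^1$), being careful about the $\calO_F$-linear structure: the functor $B\mapsto B^\vee$ on abelian schemes with $\calO_F$-action, combined with $\Hom_{\calO_F}(M,\calO_F)$ versus $\Hom_\ZZ(M,\ZZ)$, is what produces the different $\gothd$. Concretely, the trace pairing $\calO_F\times\calO_F\to\ZZ$ identifies the $\ZZ$-dual of $\calO_F$ with $\gothd^{-1}$, and this is the source of the twist. (2) Reduce checking that the induced map is an isomorphism to a purely local statement: since both sides are invertible $\calO_F\otimes_\ZZ\calO_S$-modules (for the right side this is clear; for the left side it is fact (b)), a morphism between them is an isomorphism iff it is so after base change to every geometric point of $S$, and in fact iff it is surjective, which one can check fppf- or even étale-locally. (3) Verify the statement when $S=\Spec k$ for $k$ algebraically closed. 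Here $\calO_F\otimes_\ZZ k\cong \prod k[x]/(x^{e_i})^{\oplus f_i}$ (or a product of truncated polynomial rings), $\calH^1_\dR$ is free of rank two over this ring, and the polarization pairing is a perfect alternating $\calO_F\otimes k$-bilinear form valued in $\gothc\gothd^{-1}\otimes k$; perfectness of the symplectic form (which holds because $\lambda$ is an isomorphism, not just an isogeny) forces the determinant to be a unit, giving the isomorphism. (4) Spread out / descend: since the construction of the pairing is functorial in $S$ and the isomorphism property was checked on geometric points, and both sheaves are coherent with the target locally free, the map is an isomorphism over all of $S$. Finally I would note canonicity: the whole construction only used $\lambda$ and the canonical autoduality of de Rham cohomology, so the resulting isomorphism is canonical, as claimed.

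The main obstacle will be step (1): pinning down the exact normalization of the de Rham autoduality for abelian schemes with $\calO_F$-action and tracking how the different ideal $\gothd$ enters, i.e. getting the isomorphism $\calH^1_\dR(A^\vee/S)\cong \Hom_{\calO_S}(\calH^1_\dR(A/S),\calO_S)$ to interact correctly with the $\calO_F$-module structures so that the $\gothc\gothd^{-1}$ (rather than $\gothc$ or $\gothc\gothd$) appears on the right-hand side. This is the kind of bookkeeping that is conceptually routine but easy to get off by a factor of $\gothd^{\pm1}$; I would fix it once and for all by testing on a Tate- or CM-type example, or by comparing with the well-documented normalization in the literature on Hilbert modular Shimura varieties (e.g. the ``$\gothc\gothd^{-1}$-polarization'' convention already built into the moduli problem via $\mu_{\mathcal{N}}=(\gothd^{-1}\otimes\GG_m)[{\mathcal{N}}]$ in the level structure). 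Once the pairing is correctly normalized, steps (2)--(4) are formal.
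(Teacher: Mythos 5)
Your proposal is correct and takes essentially the same route as the paper: the polarization yields $\gothc^{-1}\otimes_{\calO_F}\calH^1_\dR(A/S)\cong\calH^1_\dR(A^\vee/S)$, and composing with de Rham autoduality $\calH^1_\dR(A^\vee/S)\cong\calH om_{\calO_S}(\calH^1_\dR(A/S),\calO_S)$ and the trace-pairing identification $\calH om_{\calO_S}(-,\calO_S)\cong\gothd^{-1}\otimes_{\calO_F}\calH om_{\calO_F\otimes_\ZZ\calO_S}(-,\calO_F\otimes_\ZZ\calO_S)$ gives the perfect alternating $\gothc\gothd^{-1}$-valued pairing on the rank-two module and hence the isomorphism on $\wedge^2$. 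The only difference is that your verification at geometric points is superfluous, since perfectness is already encoded in these canonical isomorphisms (the $\gothc$-polarization $\lambda$ being an isomorphism); otherwise the arguments coincide.
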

\begin{proof}
Recall that $\calH^1_\dR(A / S)$ is a locally free $\calO_F \otimes_\ZZ \calO_S$-module of rank two (cf. \cite[Lemme 1.3]{rapoport}).
The polarization induces an isomorphism:
\begin{align*}
 \gothc^{-1}\otimes_{\calO_F}\calH^1_\dR(A/S)  & \xrightarrow{\cong} \calH^1_\dR(A^\vee /S).
\end{align*}
The lemma follows by composing this with the identifications:
\begin{align*}
\calH^1_\dR(A^\vee /S) & \cong \calH om_{\calO_S}(\calH^1_{\dR}(A/S), \calO_S)
\\&
\cong \gothd^{-1} \otimes_{\calO_F}\calH om_{\calO_F \otimes_{\ZZ} \calO_S}(\calH^1_{\dR}(A/S),  \calO_F \otimes_{\ZZ} \calO_S).\qedhere
\end{align*}
\end{proof}

\begin{notation}
\label{N:M Sh A}
For $? \in\{\mathrm{DP}, \mathrm{PR}, \mathrm{Ra}\}$ we denote by $\calA_\gothc^?$ the universal abelian scheme over $\calM_\gothc^?$.  We set
\[
\calM^? := \coprod_{\gothc \in \gothC} \calM_\gothc^?, \quad \Sh^? := \coprod_{\gothc \in \gothC} \Sh_\gothc^?, \quad \textrm{and} \quad \calA^? := \coprod_{\gothc \in \gothC} \calA_\gothc^?.
\]
Notice that the universal abelian scheme $\calA^?$ may not descent to $\Sh^?$, so that $\Sh^?$ is a \emph{coarse} moduli space for HBAS endowed with a polarization class and a $\Gamma_{00}({\mathcal{N}})$-level structure.

Denote by $\omega_{\calA^?/\calM^?}$ the pull-back via the zero section of the sheaf of relative differentials of $\calA^?$ over $\calM^?$. We let $\underline \calF = (\calF_{\gothp_i,j}^{(l)})$ denote the universal filtration of $\omega_{\calA^\PR/\calM^\PR}$. For each $p$-adic embedding $\tau = \tau_{\gothp_i, j}^l$ of $F$ into $\overline \QQ_p$,
we set
\[
\dot\omega_\tau: = \calF_{\gothp_i, j}^{(l)} / \calF_{\gothp_i, j}^{(l-1)};
\footnote{The additional dot in the notation $\dot \omega_\tau$ is placed in order to distinguish this sheaf from its descent $\omega_\tau$ to $\Sh^\PR$, which will be introduced later.}
\]
it is an automorphic line bundle on the splitting model $\calM^\PR$.  While each individual $\dot \omega_\tau$ does not descend to $\calM^\DP$ in general, their tensor product $\otimes_{\tau \in \Sigma} \dot \omega_\tau$ does, as it is the Hodge  bundle $\wedge^\mathrm{top} \omega_{\calA/\calM}$.  We provide $\dot \omega_\tau$ with an action of $\calO_F^{\times, +}$ following \cite{DT}: a positive unit $u \in \calO_F^{\times, +}$ maps a local section $s$ of $\dot \omega_\tau$ to $u^{-1/2} \cdot \langle u \rangle^*(s)$, where $\langle u \rangle$ is defined by \eqref{E:action <u>}.  It is clear that this action factors through $\calO_F^{\times, +} / (\calO_{F,{\mathcal{N}}}^\times)^2$.

By Proposition~\ref{P:basic property of integral models}(2)(3), the sheaf $\omega_{\calA^\Ra / \calM^\Ra}$ is locally free of rank one over $\calO_{F} \otimes_{\ZZ} \calO_{\calM^\Ra}$, and we have for any $\tau \in \Sigma$:
\begin{equation}
\label{E:omega nice over Ra}
\dot \omega_{\tau}|_{\calM^\Ra} = \omega_{\calA^\Ra / \calM^\Ra} \otimes_{\calO_{F} \otimes_{\ZZ} \calO_{\calM^\Ra},  \tau \otimes 1} \calO_{\calM^\Ra}.
\end{equation}
The expression on the right hand side may not give a line bundle outside $\calM^\Ra$.

Similarly, for each $p$-adic embedding $\tau$ of $F$, we define

\begin{align*}
\dot \varepsilon_\tau & := \big( \wedge^2_{\calO_F\otimes_\ZZ \calO_{\calM^\PR} } \calH^1_\dR(\calA^\PR/{\calM^\PR}) \big) \otimes_{\calO_F\otimes_\ZZ \calO_{\calM^\PR}, \tau\otimes1} \calO_{\calM^\PR}
\\&
\cong (\gothc \gothd^{-1}\otimes_{\ZZ} \calO_{\calM^\PR} ) \otimes_{\calO_F\otimes_\ZZ \calO_{\calM^\PR}, \tau \otimes 1} \calO_{\calM^\PR}.
\end{align*}
While $\dot\varepsilon_\tau$ is a trivial line bundle, it carries a non-trivial action of $\calO_F^{\times,+} / (\calO_{F, {\mathcal{N}}}^\times)^2$ given as follows: a positive unit $u \in \calO_F^{\times, +}$ maps a local section $s$ of $\dot \varepsilon_\tau$ to $u^{-1} \cdot \langle u \rangle^*(s)$.
\end{notation}
Recall that we denote base change of schemes by a subscript.
\begin{lemma}
\label{L:relative ample}
The line bundle
\[
\dot \omega^\mathrm{ex}_\FF : = \bigotimes_{i = 1}^r \bigotimes_{j =1}^{f_i} \bigotimes_{l = 1}^{e_i} \dot \omega_{\tau_{\gothp_i, j}^l, \FF}^{\otimes2(l-e_i-1)} \otimes \dot \epsilon_{\tau_{\gothp_i, j}^l, \FF}^{\otimes(e_i+1-l)}
\]
defined over the special fiber $\calM^\mathrm{PR}_\FF$ of $\calM^\mathrm{PR}$
is relatively ample with respect to $\pi: \calM^\PR_\FF\to\calM^\mathrm{DP}_\FF$.
\end{lemma}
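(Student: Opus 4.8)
The plan is to reduce relative ampleness of $\dot \omega^\mathrm{ex}_\FF$ over $\pi$ to ordinary ampleness on the fibres of $\pi$, and then to identify $\dot \omega^\mathrm{ex}_\FF$ restricted to such a fibre with a positive tensor power of the Pl\"ucker line bundle coming from the realization of $\calM^\PR$ as a closed subscheme of a product of Grassmannian bundles over $\calM^\DP$ --- the very realization already invoked in the proof of Proposition~\ref{P:basic property of integral models}. Since $\pi$ is projective, hence proper, and $\calM^\DP_\FF$ is Noetherian, a line bundle on $\calM^\PR_\FF$ is $\pi$-relatively ample as soon as its restriction to the fibre $\pi^{-1}(x)$ over each geometric point $x$ of $\calM^\DP_\FF$ is ample (the standard fibrewise criterion for relative ampleness with respect to a proper morphism). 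So I work one fibre at a time.

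Fix $x$, corresponding to a $\gothc$-polarized HBAS $(A,\lambda,i)$ over an algebraically closed field $k$; by the discussion preceding Proposition~\ref{P:basic property of integral models} (following \cite{vollaard}), each $\omega_{A/k,\gothp_i,j}$ is a $k$-vector space of dimension $e_i$. By the moduli description, the fibre $\calM^\PR_{\FF,x} := \pi^{-1}(x)$ is cut out by closed conditions --- the incidences $\scrF^{(l)}_{\gothp_i,j} \subseteq \scrF^{(l+1)}_{\gothp_i,j}$ and the annihilation of the subquotients $\scrF^{(l)}_{\gothp_i,j}/\scrF^{(l-1)}_{\gothp_i,j}$ by $[\varpi_i]$ --- inside $G_x := \prod_{i=1}^{r}\prod_{j=1}^{f_i}\prod_{l=1}^{e_i-1}\Gr_k(l,\omega_{A/k,\gothp_i,j})$, the $(i,j,l)$-factor recording the step $\scrF^{(l)}_{\gothp_i,j}$ (the top step $\scrF^{(e_i)}_{\gothp_i,j} = \omega_{A/k,\gothp_i,j}$ being forced). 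On each Grassmannian factor the dual determinant $(\det\mathcal{S}_{i,j,l})^{\dual}$ of the tautological rank-$l$ subbundle $\mathcal{S}_{i,j,l}$ is very ample, so $\bigotimes_{i,j,l}(\det\mathcal{S}_{i,j,l})^{\dual}$ is ample on $G_x$; restricting it to the closed subscheme $\calM^\PR_{\FF,x}$ shows that $\mathcal{L}_0 := \bigotimes_{i,j}\bigotimes_{l=1}^{e_i-1}(\det\scrF^{(l)}_{\gothp_i,j})^{\dual}$ is ample on $\calM^\PR_{\FF,x}$.

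It remains to identify $\dot\omega^\mathrm{ex}_\FF|_{\calM^\PR_{\FF,x}}$ with $\mathcal{L}_0^{\otimes 2}$. On this fibre, $\dot\omega_{\tau_{\gothp_i,j}^l}$ restricts to $\scrF^{(l)}_{\gothp_i,j}/\scrF^{(l-1)}_{\gothp_i,j}$; each $\dot\varepsilon_{\tau_{\gothp_i,j}^l}$ is a trivial line bundle (Notation~\ref{N:M Sh A}); and, for each $(i,j)$, $\bigotimes_{l=1}^{e_i}\big(\scrF^{(l)}_{\gothp_i,j}/\scrF^{(l-1)}_{\gothp_i,j}\big) = \det\scrF^{(e_i)}_{\gothp_i,j} = \det\omega_{A/k,\gothp_i,j}$ is trivial on the fibre, the $k$-vector space $\omega_{A/k,\gothp_i,j}$ being fixed. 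For fixed $(i,j)$, writing $D_l := \det\scrF^{(l)}_{\gothp_i,j}$ (so $D_0 = \calO$ and $\scrF^{(l)}_{\gothp_i,j}/\scrF^{(l-1)}_{\gothp_i,j} = D_l \otimes D_{l-1}^{\dual}$) and using $2(l-e_i-1) - 2((l+1)-e_i-1) = -2$, a short telescoping gives $\bigotimes_{l=1}^{e_i}\big(\scrF^{(l)}_{\gothp_i,j}/\scrF^{(l-1)}_{\gothp_i,j}\big)^{\otimes 2(l-e_i-1)} \cong \bigotimes_{l=1}^{e_i}(D_l^{\dual})^{\otimes 2}$; dropping the trivial $l=e_i$ term leaves $\bigotimes_{l=1}^{e_i-1}(D_l^{\dual})^{\otimes 2}$. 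Taking the product over all $(i,j)$ and discarding the trivial $\dot\varepsilon$-factors yields $\dot\omega^\mathrm{ex}_\FF|_{\calM^\PR_{\FF,x}} \cong \mathcal{L}_0^{\otimes 2}$, which is ample. This proves the lemma.

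I expect the one delicate point to be the bookkeeping in the previous paragraph: the exponents $2(l-e_i-1)$ and the $\dot\varepsilon_\tau$-twists built into $\dot\omega^\mathrm{ex}_\FF$ are precisely what is needed for it to become, on each fibre of $\pi$, a positive power of the Pl\"ucker bundle modulo line bundles pulled back from $\calM^\DP_\FF$; the actual content of the lemma is this normalization, so one must be careful with signs and keep track of which tensor factors descend to $\calM^\DP$. The remaining ingredients --- very-ampleness of Pl\"ucker bundles, ampleness of external tensor products, stability of ampleness under restriction to a closed subscheme, and the fibrewise criterion for relative ampleness of a proper morphism --- are standard, and the embedding of $\calM^\PR$ into a product of Grassmannian bundles over $\calM^\DP$ is the one already used in the proof of Proposition~\ref{P:basic property of integral models}.
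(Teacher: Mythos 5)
Your proof is correct and takes essentially the same approach as the paper: both reduce to checking ampleness on the fibres of $\pi$ and identify $\dot\omega^{\ex}_\FF$ there (the $\dot\varepsilon$-factors being trivial) with $\bigotimes_{i,j}\bigotimes_{l}\big(\det\scrF_{\gothp_i,j}^{(l)}\big)^{\otimes -2}$, the square of an ample determinant/Pl\"ucker-type bundle restricted from an ambient product of Grassmannians containing the fibre as a closed subscheme. The only cosmetic difference is the ambient space --- the paper uses the product of affine Grassmannians $\Gr_{\leq l}$ of $[\varpi_i]$-stable submodules of $\calH^1_\dR(A_z/k)_{\gothp_i,j}\cong (k[x]/(x^{e_i}))^{\oplus 2}$, while you use ordinary Grassmannians of subspaces of $\omega_{A_z/k,\gothp_i,j}$ --- and you spell out the fibrewise criterion and the telescoping computation that the paper leaves implicit.
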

\begin{proof}
We thank Xinwen Zhu for his help on this proof.
It suffices to check the ampleness at each point $z$ of $\calM_\FF^\DP$ (say with residue field $k$). We denote by $A_z$ the universal abelian variety at the point $z$.
The fiber $\pi^{-1}(z)$ is the space parameterizing all possible filtrations $\scrF_{\gothp_i,j}^{(l)}$ on $\omega_{A_z/k, \gothp_i,j}$; so it is a closed subscheme of the space parameterizing submodules $ \scrF_{\gothp_i,j}^{(l)}$ of $\calH^1_\dR(A_z/k)_{\gothp_i,j} \cong( k[x]/(x^{e_i}) \big)^{\oplus 2}$ of rank $l$.
In particular $\pi^{-1}(z)$ is a closed subscheme of the product of affine Grassmannians for $\GL_2$ given by  $X:=\prod_{i=1}^r \prod_{j=1}^{f_i} \big( \Gr_{\leq 1} \times \cdots \times \Gr_{\leq e_i})$.
Note that
\[
\big( \dot \omega_{\tau_{\gothp_i,j}^{1}, \FF} \otimes \cdots \otimes \dot \omega_{\tau_{\gothp_i,j}^{l}, \FF} \big)^{\otimes-1}
\]
is an ample sheaf on the Grassmannian $\Gr_{\leq l}$ appearing as a factor of $X$ for $\gothp_i$ and $j$, and so is its square.
Taking the product of these squares gives the ample line bundle $\dot \omega_\FF^\ex$ over the ambient space $X$.  It follows that $\dot \omega_\FF^\ex$ is relative ample with respect to $\pi: \calM_\FF^\PR \to \calM_\FF^\DP$.
\end{proof}

\subsection{Kodaira-Spencer filtration}
Following the arguments of \cite[Proposition~14]{sasaki} one sees that the scheme $\calM^\mathrm{PR}$ is smooth over $\calO$ (cf. also \cite{pappas-rapoport}). For later use, we also need to construct a canonical Kodaira-Spencer-type filtration on the sheaf of relative differentials of the splitting model:

\begin{theorem}
\label{T:Kodaira-Spencer ramified}
The scheme $\calM^\mathrm{PR}$ is smooth over $\calO$.  Moreover, the sheaf of relative differentials $\Omega^1_{\calM^\mathrm{PR} / \calO}$ admits a canonical filtration whose successive subquotients are given by
\[
\dot  \omega_{\tau}^{\otimes 2}  \otimes_{\calO_{\calM^{\PR}}} \dot \varepsilon_{\tau}^{\otimes-1}\quad  \textrm{ for }\tau \in \Sigma. \footnotemark
\]
\end{theorem}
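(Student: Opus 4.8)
The plan is to read off smoothness from Sasaki's local analysis and to build the Kodaira--Spencer filtration from the Gauss--Manin connection together with the universal filtration $\underline\calF$, by realizing its graded pieces as the spaces of first-order deformations of the individual steps $\calF^{(l)}_{\gothp_i,j}$ of the flag. Smoothness of $\calM^\PR$ over $\calO$ is \cite[Prop.~14]{sasaki} (see also \cite{pappas-rapoport}), so $\Omega^1:=\Omega^1_{\calM^\PR/\calO}$ and $\calT:=\calT_{\calM^\PR/\calO}$ are locally free of rank $g=\sum_i e_if_i$. Write $\calH:=\calH^1_\dR(\calA^\PR/\calM^\PR)$, with isotypic pieces $\calH_{\gothp_i,j}$, free of rank $2$ over $\bar R_{\gothp_i,j}:=\calO_{\calM^\PR}[x]/(E_{\gothp_i,j}(x))$ with $[\varpi_i]$ acting as $x$. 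By Lemma~\ref{L:wedge of HdR} the polarization gives a perfect alternating $\bar R_{\gothp_i,j}$-bilinear pairing on $\calH_{\gothp_i,j}$ valued in $\mathfrak e_{\gothp_i,j}$, the isotypic piece of $\gothc\gothd^{-1}\otimes\calO_{\calM^\PR}$, free of rank one over $\bar R_{\gothp_i,j}$; since $\bar R_{\gothp_i,j}$ is a free self-dual $\calO_{\calM^\PR}$-algebra, both $\mathfrak e_{\gothp_i,j}/(x-\tau^{l}_{\gothp_i,j}(\varpi_i))$ and the $(x-\tau^{l}_{\gothp_i,j}(\varpi_i))$-torsion submodule $\mathfrak e_{\gothp_i,j}[\,x-\tau^{l}_{\gothp_i,j}(\varpi_i)\,]$ are canonically $\cong\dot\varepsilon_{\tau^{l}_{\gothp_i,j}}$. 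Each $\calF^{(l)}_{\gothp_i,j}\subseteq\omega_{\gothp_i,j}$ is isotropic, so the pairing yields $0\to(\calF^{(l)}_{\gothp_i,j})^\perp/\calF^{(l)}_{\gothp_i,j}\to\calH_{\gothp_i,j}/\calF^{(l)}_{\gothp_i,j}\to\underline\Hom_{\bar R_{\gothp_i,j}}(\calF^{(l)}_{\gothp_i,j},\mathfrak e_{\gothp_i,j})\to 0$. Finally, $\nabla\colon\calH\to\calH\otimes_{\calO_{\calM^\PR}}\Omega^1$ is integrable, $\calO_F$-linear (hence preserves each $\calH_{\gothp_i,j}$), and compatible with the pairing.

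For $0\le l_0\le\max_i e_i$ put $\calT_{l_0}:=\{\partial\in\calT:\ \nabla_\partial(\calF^{(l)}_{\gothp_i,j})\subseteq\calF^{(l)}_{\gothp_i,j}\text{ for all }i,j\text{ and all }l\le l_0\}$, a coherent subsheaf of $\calT$; one gets a decreasing filtration $\calT=\calT_0\supseteq\cdots\supseteq\calT_{\max_i e_i}$, and $\calT_{\max_i e_i}=0$ since it is torsion-free and vanishes over the dense open $\calM^\Ra$, where $\pi$ is an isomorphism and the classical Kodaira--Spencer map is injective. For $\partial\in\calT_{l_0-1}$ the operator $\nabla_\partial$ preserves $\calF^{(l_0-1)}_{\gothp_i,j}$ and so induces a $\bar R_{\gothp_i,j}$-linear map $\dot\omega_{\tau^{l_0}_{\gothp_i,j}}=\calF^{(l_0)}_{\gothp_i,j}/\calF^{(l_0-1)}_{\gothp_i,j}\to\calH_{\gothp_i,j}/\calF^{(l_0)}_{\gothp_i,j}$; as the source is killed by $x-\tau^{l_0}_{\gothp_i,j}(\varpi_i)$, it lands in $N^{l_0}_{\gothp_i,j}:=(\calH_{\gothp_i,j}/\calF^{(l_0)}_{\gothp_i,j})[\,x-\tau^{l_0}_{\gothp_i,j}(\varpi_i)\,]$. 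Collecting over $(\gothp_i,j)$ gives an $\calO_{\calM^\PR}$-linear map $\calT_{l_0-1}\to\bigoplus_{i,j}\underline\Hom_{\calO_{\calM^\PR}}(\dot\omega_{\tau^{l_0}_{\gothp_i,j}},N^{l_0}_{\gothp_i,j})$ with kernel exactly $\calT_{l_0}$. Applying $[\,x-\tau^{l_0}_{\gothp_i,j}(\varpi_i)\,]$ to the exact sequence above and using that $\tau^{l_0}_{\gothp_i,j}(\varpi_i)-\tau^{l_0+1}_{\gothp_i,j}(\varpi_i)$ acts invertibly on the relevant subquotients of $(\calF^{(l_0)}_{\gothp_i,j})^\perp/\calF^{(l_0)}_{\gothp_i,j}$, one obtains $N^{l_0}_{\gothp_i,j}\cong\underline\Hom_{\bar R_{\gothp_i,j}}(\calF^{(l_0)}_{\gothp_i,j},\mathfrak e_{\gothp_i,j})[\,x-\tau^{l_0}_{\gothp_i,j}(\varpi_i)\,]\cong\underline\Hom_{\calO_{\calM^\PR}}(\dot\omega_{\tau^{l_0}_{\gothp_i,j}},\dot\varepsilon_{\tau^{l_0}_{\gothp_i,j}})\cong\dot\omega_{\tau^{l_0}_{\gothp_i,j}}^{\otimes-1}\otimes\dot\varepsilon_{\tau^{l_0}_{\gothp_i,j}}$; in particular $N^{l_0}_{\gothp_i,j}$ is a line bundle, and the target of the displayed map is $\bigoplus_{i,j}\dot\omega_{\tau^{l_0}_{\gothp_i,j}}^{\otimes-2}\otimes\dot\varepsilon_{\tau^{l_0}_{\gothp_i,j}}$. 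Since $\sum_{l_0\ge1}\#\{(\gothp_i,j):e_i\ge l_0\}=g$ and $\calT_{\max_i e_i}=0$, the inclusions $\calT_{l_0-1}/\calT_{l_0}\hookrightarrow\bigoplus_{i,j}\dot\omega_{\tau^{l_0}_{\gothp_i,j}}^{\otimes-2}\otimes\dot\varepsilon_{\tau^{l_0}_{\gothp_i,j}}$ are forced to be isomorphisms; surjectivity amounts to the fact that every first-order deformation of the step $\calF^{(l_0)}_{\gothp_i,j}$ inside $\calH_{\gothp_i,j}$ fixing $\calF^{(<l_0)}_{\gothp_i,j}$ is realized by a deformation of $(A,\lambda,i,\underline\calF)$, the requisite isotropy of the deformed $\omega_{\gothp_i,j}$ being automatic because the associated bilinear form factors through the rank-one module $\dot\omega_{\tau^{l_0}_{\gothp_i,j}}$.

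Dually, setting $\Fil_{l_0}\Omega^1:=\mathrm{Ann}_{\Omega^1}(\calT_{l_0})$ gives an increasing filtration $0=\Fil_0\Omega^1\subset\cdots\subset\Fil_{\max_i e_i}\Omega^1=\Omega^1$ with $\Fil_{l_0}\Omega^1/\Fil_{l_0-1}\Omega^1\cong(\calT_{l_0-1}/\calT_{l_0})^\vee\cong\bigoplus_{i,j}\dot\omega_{\tau^{l_0}_{\gothp_i,j}}^{\otimes2}\otimes\dot\varepsilon_{\tau^{l_0}_{\gothp_i,j}}^{\otimes-1}$; refining each graded piece by an arbitrary ordering of the pairs $(\gothp_i,j)$ (harmless, as at that level the subquotient is already a direct sum) produces a filtration of $\Omega^1_{\calM^\PR/\calO}$ whose successive subquotients are the $\dot\omega_\tau^{\otimes2}\otimes\dot\varepsilon_\tau^{\otimes-1}$, $\tau\in\Sigma$, as required. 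As a consistency check, over $\calM^\Ra$ one has $\omega_{\gothp_i,j}$ locally free over $\bar R_{\gothp_i,j}$ and \eqref{E:omega nice over Ra}: there the filtration splits and recovers the classical Kodaira--Spencer decomposition.

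The main obstacle is the identification $N^{l_0}_{\gothp_i,j}\cong\dot\omega_{\tau^{l_0}_{\gothp_i,j}}^{\otimes-1}\otimes\dot\varepsilon_{\tau^{l_0}_{\gothp_i,j}}$ --- equivalently, the local freeness of $N^{l_0}_{\gothp_i,j}$ --- over all of $\calM^\PR$, and not merely over $\calM^\Ra$, where everything is transparent: the complement of $\calM^\Ra$ in $\calM^\PR$ is only a divisor (it contains the ``exceptional'' loci contracted by $\pi$), so this cannot be obtained by a Hartogs-type extension and must be proved by a direct local computation on the explicit affine charts of \cite{pappas-rapoport}/\cite{sasaki}, in which $\underline\calF$, the pairing and the $[\varpi_i]$-action are all given by explicit polynomials. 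Those same charts should also yield, uniformly, the invertibility of $\tau^{l_0}_{\gothp_i,j}(\varpi_i)-\tau^{l_0+1}_{\gothp_i,j}(\varpi_i)$ on the relevant subquotients and the surjectivity of $\calT_{l_0-1}/\calT_{l_0}\to\bigoplus_{i,j}\dot\omega_{\tau^{l_0}_{\gothp_i,j}}^{\otimes-2}\otimes\dot\varepsilon_{\tau^{l_0}_{\gothp_i,j}}$; a secondary technical point is the careful treatment of the self-duality of $\bar R_{\gothp_i,j}$ used to identify torsion submodules of $\mathfrak e_{\gothp_i,j}$ with $\dot\varepsilon_\tau$.
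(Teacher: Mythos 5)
There is a genuine problem with the central identification in your construction, and it is not just the unproved ``main obstacle'' you flag at the end: the sheaf $N^{l_0}_{\gothp_i,j}=(\calH_{\gothp_i,j}/\calF^{(l_0)}_{\gothp_i,j})[\,x-\tau^{l_0}_{\gothp_i,j}(\varpi_i)\,]$ is \emph{not} a line bundle, so no computation on local charts can establish $N^{l_0}_{\gothp_i,j}\cong\dot\omega^{\otimes-1}_{\tau^{l_0}_{\gothp_i,j}}\otimes\dot\varepsilon_{\tau^{l_0}_{\gothp_i,j}}$. Passing to the quotient by $\calF^{(l_0)}$ creates new torsion: at a geometric point of the special fiber with field $L$ one has $\calH_{\gothp_i,j}\cong(L[x]/x^{e_i}L[x])^{\oplus2}$ and, in a suitable basis, $\calF^{(l_0)}_{\gothp_i,j}\cong x^aL[x]/x^{e_i}L[x]\oplus x^bL[x]/x^{e_i}L[x]$ with $a+b=2e_i-l_0$, so $\calH_{\gothp_i,j}/\calF^{(l_0)}_{\gothp_i,j}\cong L[x]/x^aL[x]\oplus L[x]/x^bL[x]$, whose $x$-torsion (all $\tau^\alpha_{\gothp_i,j}(\varpi_i)$ reduce to $0$ in $\FF$) is two-dimensional as soon as $a,b\geq1$ --- which holds at \emph{every} point of the special fiber when $l_0<e_i$ (even on the Rapoport locus, so this is not an issue confined to the exceptional divisor), and at every non-Rapoport point when $l_0=e_i$. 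Thus $N^{l_0}$ has rank one on the generic fiber and rank two on the special fiber. Relatedly, the invertibility you invoke for $\tau^{l_0}_{\gothp_i,j}(\varpi_i)-\tau^{l_0+1}_{\gothp_i,j}(\varpi_i)$ fails identically mod $\varpi$, since these elements lie in the maximal ideal of $\calO$; indeed $(\calF^{(l_0)})^\perp/\calF^{(l_0)}$ has nontrivial $x$-torsion in characteristic $p$ (this is exactly the computation in the paper's Claim). The correct normal space is obtained by first restricting to $(\calF^{(l_0-1)})^\perp$ and quotienting by $\calF^{(l_0-1)}$ (legitimate for your map because $\nabla$ is compatible with the polarization pairing), i.e.\ by using the paper's $\calH^{(l)}_{\gothp_i,j}\subset(\calF^{(l-1)})^\perp/\calF^{(l-1)}$; the substantive input is then that this is a rank-two subbundle with $\wedge^2\calH^{(l)}_{\gothp_i,j}\cong\dot\varepsilon_{\tau^l_{\gothp_i,j}}$ (Corollary~\ref{C:de Rham cohomology at tau}, the Claim inside the proof of Theorem~\ref{T:Kodaira-Spencer ramified}), so that the graded normal direction is the line bundle $\calH^{(l)}_{\gothp_i,j}\big/(\calF^{(l)}_{\gothp_i,j}/\calF^{(l-1)}_{\gothp_i,j})\cong\dot\omega^{\otimes-1}_{\tau^l_{\gothp_i,j}}\otimes\dot\varepsilon_{\tau^l_{\gothp_i,j}}$.

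Even after this repair, your concluding step does not close the argument: an injection of coherent sheaves whose ranks add up correctly need not be surjective (the cokernel can be torsion supported on the special fiber, which is exactly where the delicate behavior lives), so ``forced to be isomorphisms'' is not a valid deduction, and your one-sentence appeal to the realizability of every first-order deformation of the flag step is precisely the point at issue. That surjectivity is the content of the paper's proof: a Grothendieck--Messing/crystalline analysis in which, over a square-zero thickening, the lifts of each $\scrF^{(l)}_{\gothp_i,j}$ form a torsor under $(\scrF^{(l)}/\scrF^{(l-1)})^{\otimes-2}\otimes\wedge^2\scrH^{(l)}\otimes\calI$, with the isotropy of the lifted flag handled via the identity \eqref{E:strongly symplectic} and with the lifting of $\lambda$ and of the level structure checked separately; this simultaneously proves smoothness and identifies the graded pieces of $\Omega^1_{\calM^\PR/\calO}$ at every point, including non-Rapoport points of the special fiber. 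Citing \cite{sasaki} for smoothness does not substitute for this pointwise tangent-space identification, so as written the proposal does not prove the filtration statement.
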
\footnotetext{The relative differential sheaf is \emph{not} in general the direct sum of these sheaves, at least not as an $\calO_F \otimes_{\ZZ} \calO_{\calM^\PR}$-module.}
\begin{proof}
The first statement is proved in \cite{sasaki}, following \cite{pappas-rapoport}. We give here a combined proof of both statements in the theorem. Let $S_0 \hookrightarrow S$ be a closed immersion of locally Noetherian $\calO$-schemes whose ideal of definition $\calI$ satisfies $\calI^2 = 0$.  Consider an $S_0$-valued point $x_0 = (A_0, \lambda_0, i_0, \underline \scrF)$ of $\calM^\mathrm{PR}$.
To prove the smoothness of $\calM^\mathrm{PR}$, it suffices to show that there exists $x \in \calM^\mathrm{PR}(S)$ lifting $x_0$.

We first introduce some notation.
Let $\calH_\cris^1(A_0 / S_0)$ denote the crystalline cohomology sheaf of $A_0$ over $S_0$, which is locally free of rank two over $\calO_{S_0}^\cris \otimes_\ZZ \calO_F$ by \cite[Lemme 1.3]{rapoport}. The action of $\calO_F$ on $A_0$ induces a natural direct sum decomposition:
\[
\calH_\cris^1(A_0/S_0) = \bigoplus_{i =1}^r \bigoplus_{j = 1}^{f_i}
\calH_\cris^1(A_0/S_0)_{\gothp_i, j},
\]
so that $W(\FF_{\gothp_i}) \subseteq \calO_{F_{\gothp_i}}$ acts on $\calH_\cris^1(A_0/S_0)_{\gothp_i, j}$ via $\tau_{\gothp_i,j}$.
Moreover $\calH_\cris^1(A_0/S_0)_{\gothp_i, j}$ is a locally free module of rank two over
\[
\calO_{F_{\gothp_i}} \otimes_{W( \FF_{\gothp_i}), \tau_{\gothp_i,j}} \calO^\mathrm{cris}_{S_0} \cong \calO^\mathrm{cris}_{S_0} [x] / (E_{\gothp_i,j}(x)).
\]
Since $S$ is a PD-thickening of $S_0$, we can evaluate the crystalline cohomology over $S$ to obtain $\calH_\cris^1(A_0/S_0)_S$ and its direct summands $\calH_\cris^1(A_0/S_0)_{S,\gothp_i, j}$.

Similarly, the natural exact sequence
\[
0 \to \omega_{A_0/S_0} \to \calH^1_\cris(A_0/S_0)_{S_0} \to \Lie_{A_0^\vee/S_0} \to 0
\]
decomposes into the direct sum of exact sequences
\[
0 \to \omega_{A_0/S_0, \gothp_i,j} \to \calH^1_\cris(A_0/S_0)_{S_0, \gothp_i,j} \to \Lie_{A_0^\vee/S_0, \gothp_i,j} \to 0.
\]
Here each $\omega_{A_0/S_0, \gothp_i,j}$ is a (not necessarily locally free) coherent $\calO_{F_{\gothp_i}} \otimes_{W( \FF_{\gothp_i}), \tau_{\gothp_i,j}} \calO_{S_0}$-module, which is locally free and of rank $e_i$ as an $\calO_{S_0}$-module.  It is  locally a direct summand of $\calH^1_\cris(A_0/S_0)_{S_0, \gothp_i,j}$.

The polarization $\lambda_0: A_0^\vee \to A_0 \otimes_{\calO_F} \gothc$ induces a non-degenerate, symplectic pairing\footnote{To see this, we note that the polarization induces an isomorphism $\lambda_0^*:  \calH_\cris^1(A_0/S_0) \otimes_{\calO_F}\gothc \to \calH_\cris^1(A_0^\vee/S_0) \cong (\calH_\cris^1(A_0/S_0))^\vee$. Taking the $\tau_{\gothp_i,j}$-component gives the pairing.} 
\[
\langle\cdot, \cdot \rangle\colon
\calH^1_\cris(A_0/S_0)_{\gothp_i,j} \times \calH^1_\cris(A_0/S_0)_{\gothp_i,j} \to \calO_{S_0}^\cris, \ \textrm{such that:}
\]
\begin{equation}
\label{E:OF hermitian}
\langle ax , y \rangle  = \langle x,ay \rangle \textrm{ for }a \in \calO_{F_{\gothp_i}} \textrm{, }x,y \in \calH^1_\cris(A_0/S_0)_{\gothp_i,j},\mbox{ and}
\end{equation}
\begin{equation}
\label{E:strongly symplectic}
\langle ax, x \rangle = 0,\textrm{ for }a \in \calO_{F_{\gothp_i}} \textrm{, } x \in \calH^1_\mathrm{cris}(A_0/S_0)_{ \gothp_i,j}.
\end{equation}

Here the validity of condition~\eqref{E:strongly symplectic} can be verified as follows.
When the integer $2$ is not a zero divisor on $S$, this is clear by \eqref{E:OF hermitian} and the symplectic nature of the pairing.
In the general case, we may assume that $S_0 = \Spec R_0$ is affine and $x \in \calH^1_\mathrm{cris}(A_0/S_0)_{S', \gothp_i,j}$ is a section over some PD-thickening $S_0 \hookrightarrow S' = \Spec R'$ where $R'$ is Noetherian. Write $R'$ as a quotient of $\ZZ_p$-flat Noetherian algebra $\tilde R$, and let $\tilde R_{\mathrm{PD}}$ denote the PD-envelop of the quotient map $\tilde R \twoheadrightarrow R' \twoheadrightarrow R_0$, so that we may evaluate the crystalline cohomology on the PD-thickening $\tilde R_{\mathrm{PD}} \to R_0$.
Now $\tilde R_{\mathrm{PD}}$ is $\ZZ_p$-flat and so \eqref{E:strongly symplectic} holds over $\tilde R_{\mathrm{PD}}$; so it holds over $R$ as $R$ is a quotient of $\tilde R_\mathrm{PD}$ by the universal property of the PD-envelop.

The submodule $\omega_{A_0/S_0,\gothp_i,j}$ of $\calH^1_\cris(A_0/S_0)_{S_0,\gothp_i,j}$ is (maximal) isotropic with respect to the pairing $\langle\cdot, \cdot \rangle$.  In particular, each $\scrF_{\gothp_i,j}^{(l)}$ is contained in its annihilator with respect to $\langle\cdot,\cdot\rangle$.

By crystalline deformation theory (cf. \cite[pp. 116--118]{grothendieck},   \cite[Chap. II \S 1]{mazur-messing}), to lift the abelian variety $A_0$ together with the $\calO_F$-action, it suffices to lift each $\omega_{A_0/S_0, \gothp_i,j}$ to an $\calO_F$-stable submodule $\tilde \omega_{\gothp_i,j}$ of $\calH^1_\cris(A_0/S_0)_{S, \gothp_i,j}$ which is a locally free $\calO_S$-subbundle of rank $e_i$.
Once such lift is determined, the level structure $i_0$ lifts uniquely; the polarization $\lambda_0$ lifts if $\tilde \omega_{\gothp_i,j}$ is isotropic for the pairing $\langle \cdot, \cdot \rangle$.\footnote{Alternatively, one may quote the main result of \cite{vollaard} to show that the polarization lifts as long as we can lift the filtrations $\scrF_{\gothp_i,j}^{(l)}$, which might save a few lines. Unfortunately, there is a very minor gap in the proof of \cite[Proposition~2.11]{vollaard}, as the condition \eqref{E:strongly symplectic} was neglected.  For completeness, we give here a self-contained proof.}

To lift the $S_0$-point $x_0 = (A_0, \lambda_0, i_0, \underline \scrF)$ to an $S$-point, one needs to lift, for each $\tau_{\gothp_i,j}$, the entire filtration
\[
0 =\scrF_{\gothp_i, j}^{(0)} \subsetneq \scrF_{\gothp_i, j}^{(1)} \subsetneq \cdots \subsetneq
\scrF_{\gothp_i, j}^{(e_i)} = \omega_{A_0/S_0, \gothp_i, j} \subset \calH^1_\cris(A_0 /S_0)_{S_0,\gothp_i,j}
\]
to an $\calO_F$-stable filtration
\[
0 =\tilde \scrF_{\gothp_i, j}^{(0)} \subsetneq \tilde \scrF_{\gothp_i, j}^{(1)} \subsetneq \cdots \subsetneq
\tilde \scrF_{\gothp_i, j}^{(e_i)} = \tilde \omega_{\gothp_i, j} \subset \calH^1_\cris(A_0 /S_0)_{S, \gothp_i,j}
\]
such that:
\begin{itemize}
\item
each subquotient $\tilde \scrF_{\gothp_i, j}^{(l)} / \tilde \scrF_{\gothp_i, j}^{(l-1)}$ is a locally free sheaf of rank one over $S$,
\item
each $\tilde \scrF_{\gothp_i,j}^{(l)}$ is contained in its annihilator $(\tilde \scrF_{\gothp_i,j}^{(l)})^\perp$ with respect to the pairing $\langle \cdot , \cdot\rangle$, and
\item the action of $\calO_F$ on each subquotient $\tilde \scrF_{\gothp_i, j}^{(l)} / \tilde  \scrF_{\gothp_i, j}^{(l-1)}$ factors through $\calO_F \xrightarrow{\tau_{\gothp_i, j}^l} \calO$.
\end{itemize}
The smoothness of the moduli space, as well as the existence of the Kodaira-Spencer filtration, follow from understanding the lifts of each step of the filtration inductively, beginning with $\tilde \scrF_{\gothp_i,j}^{(1)}$ and climbing up to $\tilde \scrF_{\gothp_i, j}^{(e_i)}$.

Let $\scrH_{\gothp_i,j}^{(1)}$ denote the kernel of the map $[\varpi_i] - \tau_{\gothp_i, j}^{1}(\varpi_i)$ acting on the locally free rank two $\calO_S[x]/(E_{\gothp_i,j}(x))$-module $\calH^1_\mathrm{cris}(A_0/S_0)_{S, \gothp_i,j}$. (Recall that $[\varpi_i]$ denotes the action of $\varpi_i$ as an element of $\calO_{F_{\gothp_i}}$).
By condition \eqref{E:strongly symplectic}, the isotropic condition on a lift $\tilde \scrF_{\gothp_i,j}^{(1)}$ of $\scrF_{\gothp_i,j}^{(1)}$ is automatically satisfied. Observe that $\scrH_{\gothp_i,j}^{(1)}$ is a rank-two $\calO_S$-subbundle of $\calH^1_\mathrm{cris}(A_0/S_0)_{S, \gothp_i,j}$, and that its base change $\scrH_{\gothp_i,j, S_0}^{(1)}$ to $S_0$ contains the subbundle $\scrF_{\gothp_i,j}^{(1)}$ by construction. The set of possible choices for a rank one $\calO_{S}$-subbundle $\tilde \scrF_{\gothp_i,j}^{(1)}$ of $\scrH_{\gothp_i,j}^{(1)}$ lifting $\scrF_{\gothp_i,j}^{(1)}$ is a torsor under
\[
\calH om_{\calO_{S_0}} \big (\scrF_{\gothp_i,j}^{(1)}, \scrH_{\gothp_i,j, S_0}^{(1)} / \scrF_{\gothp_i,j}^{(1)}\big) \otimes_{\calO_{S_0}} \calI
\cong (\scrF_{\gothp_i,j}^{(1)})^{\otimes -2} \otimes_{\calO_{S_0}} (\wedge^2_{\calO_{S_0}} \scrH_{\gothp_i,j, S_0}^{(1)}) \otimes_{\calO_{S_0}} \calI.
\]
This proves the part of the theorem regarding $\tau_{\gothp_i,j}^1$.

Now suppose that we have lifted $\scrF_{\gothp_i,j}^{(l-1)}$ to $\tilde \scrF_{\gothp_i,j}^{(l-1)}$ and that we want to lift $\scrF_{\gothp_i,j}^{(l)}$ to an $\calO_F$-stable subsheaf  $\tilde \scrF_{\gothp_i,j}^{(l)}$ of $\calH^1_\mathrm{cris}(A_0/S_0)_{S, \gothp_i,j}$ so that
\begin{itemize}
\item it contains $\tilde \scrF_{\gothp_i,j}^{(l-1)}$,
\item it is locally a direct summand of $\calH^1_\mathrm{cris}(A_0/S_0)_{S, \gothp_i,j}$,
\item $\tilde \scrF_{\gothp_i,j}^{(l)} /  \tilde \scrF_{\gothp_i,j}^{(l-1)}$  is locally free of rank one over $\calO_S$,
\item $\calO_F$ acts on  $\tilde \scrF_{\gothp_i,j}^{(l)} /  \tilde \scrF_{\gothp_i,j}^{(l-1)}$ via $\tau_{\gothp_i,j}^{l}$, and
\item
$\tilde \scrF_{\gothp_i,j}^{(l)} \subseteq (\tilde \scrF_{\gothp_i,j}^{(l-1)})^\perp$ (and hence $\tilde \scrF_{\gothp_i,j}^{(l)} \subseteq (\tilde \scrF_{\gothp_i,j}^{(l)})^\perp$ because $\langle \cdot,\cdot\rangle$ satisfies condition \eqref{E:strongly symplectic}).
\end{itemize} 
Define
\[
\scrH_{\gothp_i,j}^{(l)}: = \Big\{ z \in (\tilde \scrF_{\gothp_i,j}^{(l-1)})^\perp \big/ \tilde \scrF_{\gothp_i,j}^{(l-1)}\; \Big|\;
[\varpi_i]z - \tau_{\gothp_i, j}^{l}(\varpi_i)z  =0 \Big\}.
\]
\textbf{Claim:} We assert that the following two facts hold: 
\begin{enumerate}
 \item the sheaf $\scrH_{\gothp_i,j}^{(l)}$ is a rank-two $\calO_S$-subbundle of $\calH^1_\cris(A_0/S_0)_{S,\gothp_i,j} \big/ \tilde \scrF_{\gothp_i,j}^{(l-1)}$ (cf. \cite[Proposition 5.2]{pappas-rapoport}), and
 \item $\wedge^2_{\calO_S} \scrH_{\gothp_i,j}^{(l)} \cong \big(\wedge^2_{\calO_F \otimes_\ZZ \calO_S}\calH^1_\mathrm{cris}(A_0/S_0)_S\big)  \otimes_{\calO_F \otimes_\ZZ \calO_S, \tau_{\gothp_i,j}^l \otimes 1} \calO_{S}$.
\end{enumerate}

\noindent Granting the claim, we see that constructing the desired lift $\tilde \scrF_{\gothp_i,j}^{(l)}$ is equivalent to lifting $\scrF_{\gothp_i,j}^{(l)} / \scrF_{\gothp_i,j}^{(l-1)}\subset\scrH_{\gothp_i,j,S_0}^{(l)}$to a rank one $\calO_S$-subbundle of $\scrH_{\gothp_i,j}^{(l)}$. The set of such lifts is a torsor under
\begin{align*}
\calH om_{\calO_{S_0}} &\big (\scrF_{\gothp_i,j}^{(l)} / \scrF_{\gothp_i,j}^{(l-1)}, \scrH_{\gothp_i,j, S_0}^{(l)} / 
(\scrF_{\gothp_i,j}^{(l)}/\scrF_{\gothp_i,j}^{(l-1)})\big) \otimes_{\calO_{S_0}} \calI
\\
&
\cong \big( \scrF_{\gothp_i,j}^{(l)} / \scrF_{\gothp_i, j}^{(l-1)}\big)^{\otimes -2}\otimes_{\calO_{S_0}}
(\wedge^2_{\calO_{S_0}} \scrH_{\gothp_i,j,S_0}^{(l)})
\otimes_{\calO_{S_0}} \calI
\\
&\cong \big( \scrF_{\gothp_i,j}^{(l)} / \scrF_{\gothp_i, j}^{(l-1)}\big)^{\otimes -2}\otimes_{\calO_{S_0}}
\big( (\wedge^2_{\calO_F \otimes_\ZZ \calO_{S_0}}\calH^1_\mathrm{dR}(A_0/S_0)) \otimes_{\calO_F \otimes_\ZZ \calO_{S_0},\tau_{\gothp_i,j}^l \otimes 1} \calO_{S_0}\big)
\otimes_{\calO_{S_0}} \calI.
\end{align*}
The theorem follows. The Claim is proved below.
\end{proof}
\noindent \emph{Proof of the Claim.} We first observe that the statements in the Claim are stable under base change in $S$. Moreover they are purely statements about $\calH^1_\mathrm{cris}(A_0/S_0)_{S, \gothp_i,j}$ as a locally free module of rank two over 
$\calO_{F_{\gothp_i}}\otimes_{W(\FF_{\gothp_i}),\tau_{\gothp_i,j}}\calO_S $, endowed with a non-degenerate symplectic pairing satisfying \eqref{E:OF hermitian} and \eqref{E:strongly symplectic}. In particular, we can prove the Claim for abstract locally free sheaves with the above properties.

To prove (1) it suffices to pass to the completion of the stalks of $\scrH_{\gothp_i,j}^{(l)}$ at each point of $S$. We can therefore assume that $S$ is the spectrum of a complete Noetherian local ring, hence the spectrum of a quotient of a regular complete Noetherian local ring $L$.
By induction on $l$ we can lift the sheaves involved in the definition of $\scrH_{\gothp_i,j}^{(l)}$ to $\Spec L$ (and keeping the condition~\eqref{E:strongly symplectic}). We can then assume that $L$ is reduced, 
and finally that it is a field\footnote{In the proof of \cite[Proposition~5.2(b)]{pappas-rapoport} the authors claim right away that it suffices to prove part (1) of the Claim when $S$ is the spectrum of a field; we think one has to slightly waggle the argument when $S$ is not reduced.}.
If $L$ is a field of characteristic zero, statement (1) is clear as $\calH^1_\mathrm{cris}(A_0/S_0)_{S, \gothp_i,j}$ is isomorphic to
\[
(\calO_{F_{\gothp_i}} \otimes_{W(\FF_{\gothp_i}), \tau_{\gothp_i,j}}L)^{\oplus 2}
\cong 
\bigoplus_{\alpha=1}^{e_i} (\calO_{F_{\gothp_i}} \otimes_{\calO_{F_{\gothp_i}}, \tau_{\gothp_i,j}^{\alpha}}L)^{\oplus 2},
\]
and hence
\[
 \scrH_{\gothp_i,j}^{(l)} \cong (\calO_{F_{\gothp_i}} \otimes_{\calO_{F_{\gothp_i}}, \tau_{\gothp_i,j}^{l}}L)^{\oplus 2}\cong L^{\oplus 2}.
\]
If $L$ is a field of characteristic $p$ then $\calH^1_\mathrm{cris}(A_0/S_0)_{S, \gothp_i,j}$ is isomorphic to
\begin{equation}
\label{E:special fiber crystalline cohomology}
 \frac{L [x]}{x^{e_i}L[x]}\oplus \frac{L [x]}{x^{e_i}L[x]},
\end{equation}
where the action of $\varpi_i$ is given by multiplication by $x$.
We need to show that for any non-zero isotropic $L[x]$-submodule $W$ of \eqref{E:special fiber crystalline cohomology}
of $L$-dimension $<e_i$, the quotient $W^\perp/W$ has two-dimensional $x$-torsion.
We may pick a basis so that $W \cong x^aL[x]/x^{e_i}L[x] \oplus x^bL[x]/x^{e_i}L[x]$ for integers $a,b$ with $0 < a,b \leq  e_i$ and $a+b >e_i$.
Using \eqref{E:OF hermitian} and \eqref{E:strongly symplectic}, we see that $W^\perp$ contains $x^{e_i-b} L[x]/x^{e_i}L[x] \oplus x^{e_i-a}L[x]/x^{e_i}L[x]$ (under the same basis).
Since the pairing is non-degenerate, dimension considerations force this
containment to be an equality.
So under the condition $a+b>e_i$, we explicitly see that $W^\perp/W$ has two-dimensional $x$-torsion, namely, $x^{a-1}L[x]/x^aL[x] \oplus x^{b-1}L[x]/x^bL[x]$.

We now turn to the proof of statement (2) of the Claim, but we first introduce some notation. For an integer $l'$ such that $0\leq l' \leq e_i$ we set
\[
P^{\leq l'}(x) := \prod_{1\leq \alpha \leq l'} (x - \tau_{\gothp_i, j}^{\alpha}(\varpi_i)) \in \calO[x],
\]
with the convention that a product over the empty set is equal to $1$.
Recall that $\calO_{F_{\gothp_i}} \otimes_{W(\FF_{\gothp_i}), \tau_{\gothp_i,j}} \calO_S \cong \calO_S[x] / (E_{\gothp_i,j}(x))$ (where
the action of $\varpi_i$ is given by multiplication by $x$), so that
\[
T:=
\{z \in \calH^1_\cris(A_0/S_0)_{S, \gothp_i,j}\;|\; P^{\leq l}([\varpi_i])\cdot z = 0 \}
\]
is an $\calO_S[x] /(P^{\leq l}(x))$-module
locally free
of rank two.
We denote by $\tilde H$ the preimage of $\scrH_{\gothp_i,j}^{(l)}$ in $\calH^1_\cris(A_0/S_0)_{S, \gothp_i,j}$.

The inclusion $\tilde H \subset T$ induces a homomorphism of $\calO_S$-modules: 
\begin{equation}
\label{E:KS lifting map}
\varphi:
\wedge^2_{\calO_S}\tilde H \longto
\wedge^2_{\calO_S[x] /(P^{\leq l}(x))} T.
\end{equation}
We claim that $\varphi(y \wedge z)=0$ for $y \in \tilde H$ and $z \in \tilde \scrF_{\gothp_i,j}^{(l-1)}$, and that the image of $\varphi$ equals
\begin{equation}
\label{E:what wedge 2 H equals to}
P^{\leq l-1}(x)\cdot \wedge^2_{\calO_S[x] /(P^{\leq l}(x))} T.
\end{equation}
Granting this, we deduce that the invertible $\calO_S$-module
\[
\wedge^2_{\calO_S}\scrH_{\gothp_i,j}^{(l)} \cong \wedge^2_{\calO_S}(\tilde H/\tilde \scrF_{\gothp_i,j}^{(l-1)})
\]
surjects onto, and hence is isomorphic to, the invertible $\calO_S$-module \eqref{E:what wedge 2 H equals to}. Since the latter is canonically isomorphic to $\big(\wedge^2_{\calO_F \otimes_\ZZ \calO_S}\calH^1_\dR(A/S)\big) 
 \otimes_{\calO_F \otimes_\ZZ \calO_S, \tau_{\gothp_i,j}^l\otimes 1} \calO_S $, statement (2) of the Claim follows.

We are left with proving the two statements granted above. In order to do so, as before, we can assume that $S$ is the spectrum of a field $L$.
If $L$ has characteristic zero, the two statements are obvious, so we can assume that $L$ has  characteristic $p$. In this case we have
$T \cong \big( L[x]/ x^lL[x]\big)^{\oplus 2}$, and $\tilde \scrF_{\gothp_i,j}^{(l-1)} \cong (x^{l-a}L[x]/x^lL[x]) \oplus (x^{l-b}L[x]/x^lL[x])$ for some non-negative integers $a,b$ such that $a+b = l-1$.  Hence,
\[
\tilde H \cong
\frac{x^{l-a-1}L[x]}{x^lL[x]} \oplus \frac{x^{l-b-1}L[x]}{x^lL[x]}.
\]
The map $\varphi:\wedge^2_L\tilde H \to \wedge^2_{L[x]/(x^l)}(L[x]/x^lL[x])^{\oplus2}=L[x]/x^lL[x]$ is therefore given by:
\[
(A\bar x^{l-a-1}\oplus B\bar x^{l-b-1})\wedge_L(C\bar x^{l-a-1}\oplus D\bar x^{l-b-1}) \mapsto (AD-BC)\bar x^{l-1}, 
\]
where $\bar x$ denotes the image of $x$ in $L[x]/x^lL[x]$, and $A,B,C,D\in L[x]/x^lL[x]$. 
It is now clear that $\varphi$ vanishes on those elements for which both $C$ and $D$ are divisible by $\bar x$. These are the elements
belonging to $\tilde H \wedge_L \tilde\scrF_{\gothp_i,j}^{(l-1)}$. It also follows that $\varphi(\wedge^2_L \tilde H) = x^{l-1}L[x] / x^lL[x]$. This completes the proof.
\hfill \qed

The proof of Theorem~\ref{T:Kodaira-Spencer ramified}, especially the Claim implies the following
\begin{corollary}
\label{C:de Rham cohomology at tau}
For the universal filtration $\underline \calF = (\calF_{\gothp_i,j}^{(l)})$ on $\omega_{\calA^\PR/\calM^\PR}$, we put
\[
\calH_{\gothp_i,j}^{(l)}: = \big\{ z \in (\calF_{\gothp_i,j}^{(l-1)})^\perp / \calF_{\gothp_i,j}^{(l-1)} \,  \big| \, [\varpi_i]z-\tau_{\gothp_i,j}^l(\varpi_i)z=0\big\}.
\]
Then it is a subbundle of $H^1_\dR(\calA^\PR/\calM^\PR)_{\gothp_i,j} / \calF_{\gothp_i,j}^{(l-1)}$ of rank two, containing $\calF_{\gothp_i,j}^{(l)} / \calF_{\gothp_i,j}^{(l-1)}$. Moreover, we have a canonical isomorphism
\[
\wedge^2_{\calO_{\calM^\PR}} \calH_{\gothp_i,j}^{(l)} \cong \dot \epsilon_{\tau_{\gothp_i,j}^l}
\]
\end{corollary}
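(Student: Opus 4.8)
The plan is to read off the corollary directly from the Claim established inside the proof of Theorem~\ref{T:Kodaira-Spencer ramified}, applied to the universal situation over $\calM^\PR$ itself (equivalently, to the ``trivial'' PD-thickening $S = S_0 = \calM^\PR$ with zero ideal). The key point is that, as the proof of the Claim makes explicit, its two assertions are purely module-theoretic statements about a locally free rank-two module over $\calO_{F_{\gothp_i}} \otimes_{W(\FF_{\gothp_i}), \tau_{\gothp_i,j}} \calO_S$ equipped with a non-degenerate symplectic pairing satisfying \eqref{E:OF hermitian} and \eqref{E:strongly symplectic}, together with an $\calO_F$-stable isotropic subbundle. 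First I would observe that the datum $\big(\calH^1_\dR(\calA^\PR/\calM^\PR)_{\gothp_i,j}, \calF_{\gothp_i,j}^{(l-1)}\big)$ over $S = \calM^\PR$ is exactly of this kind: local freeness of rank two over $\calO_{F_{\gothp_i}} \otimes_{W(\FF_{\gothp_i}), \tau_{\gothp_i,j}} \calO_{\calM^\PR}$ is \cite[Lemme 1.3]{rapoport}; condition \eqref{E:OF hermitian} is the $\calO_F$-linearity of the polarization $\lambda$; condition \eqref{E:strongly symplectic} holds because $\calM^\PR$ is smooth over $\calO$, hence flat over $\ZZ_p$, so $2$ is not a zero divisor on $\calO_{\calM^\PR}$ and \eqref{E:strongly symplectic} follows from \eqref{E:OF hermitian} and the alternating property of the pairing; and $\calF_{\gothp_i,j}^{(l-1)}$ is $\calO_F$-stable and isotropic, being a subbundle of the maximal isotropic subbundle $\omega_{\calA^\PR/\calM^\PR, \gothp_i,j}$. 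Under this identification, the sheaf $\scrH_{\gothp_i,j}^{(l)}$ of the Claim becomes precisely $\calH_{\gothp_i,j}^{(l)}$.

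Granting this, part~(1) of the Claim gives immediately that $\calH_{\gothp_i,j}^{(l)}$ is a rank-two $\calO_{\calM^\PR}$-subbundle of $\calH^1_\dR(\calA^\PR/\calM^\PR)_{\gothp_i,j} / \calF_{\gothp_i,j}^{(l-1)}$, and part~(2) gives the canonical isomorphism
\[
\wedge^2_{\calO_{\calM^\PR}} \calH_{\gothp_i,j}^{(l)} \cong \big(\wedge^2_{\calO_F\otimes_\ZZ\calO_{\calM^\PR}} \calH^1_\dR(\calA^\PR/\calM^\PR)\big) \otimes_{\calO_F\otimes_\ZZ\calO_{\calM^\PR},\, \tau_{\gothp_i,j}^l\otimes 1} \calO_{\calM^\PR},
\]
whose right-hand side is $\dot\epsilon_{\tau_{\gothp_i,j}^l}$ by the definition given in Notation~\ref{N:M Sh A}. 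The remaining containment $\calF_{\gothp_i,j}^{(l)}/\calF_{\gothp_i,j}^{(l-1)} \subseteq \calH_{\gothp_i,j}^{(l)}$ follows from the defining conditions of the moduli functor $\underline\calM_\gothc^\PR$: the subquotient $\calF_{\gothp_i,j}^{(l)}/\calF_{\gothp_i,j}^{(l-1)}$ is annihilated by $[\varpi_i]-\tau_{\gothp_i,j}^l(\varpi_i)$, and $\calF_{\gothp_i,j}^{(l)} \subseteq \omega_{\calA^\PR/\calM^\PR,\gothp_i,j}$ is isotropic, hence contained in $(\calF_{\gothp_i,j}^{(l-1)})^\perp$.

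I do not expect a serious obstacle here: the substantive work was done in proving the Claim. The one point that should be checked with some care is the legitimacy of invoking the abstract form of the Claim — whose proof descends through completions of local rings down to residue fields — in the universal setting, and in particular that condition \eqref{E:strongly symplectic} is available globally over $\calM^\PR$ rather than only at geometric points. As indicated above this is guaranteed by the $\ZZ_p$-flatness of the smooth scheme $\calM^\PR$, after which the corollary is simply a transcription of the Claim into the universal case.
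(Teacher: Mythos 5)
Your proposal is correct and is essentially the paper's own argument: the corollary is stated as a direct consequence of the Claim inside the proof of Theorem~\ref{T:Kodaira-Spencer ramified}, specialized to the trivial thickening $S=S_0$ over $\calM^\PR$, where crystalline cohomology becomes $\calH^1_\dR(\calA^\PR/\calM^\PR)$ and $\dot\varepsilon_{\tau_{\gothp_i,j}^l}$ is by definition the $\tau_{\gothp_i,j}^l$-component of $\wedge^2_{\calO_F\otimes_\ZZ\calO_{\calM^\PR}}\calH^1_\dR$. Your verification of the hypotheses, in particular deducing \eqref{E:strongly symplectic} from the $\ZZ_p$-flatness of the smooth scheme $\calM^\PR$, and the containment of $\calF_{\gothp_i,j}^{(l)}/\calF_{\gothp_i,j}^{(l-1)}$ from the moduli conditions, fills in exactly what the paper leaves implicit.
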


These $\calH_{\gothp_i,j}^{(l)}$ will serve the role of ``de Rham cohomology at $\tau_{\gothp_i,j}^l$" when defining the partial Hasse invariants, analogous to the unramified case.

\subsection{Toroidal compactification}
\label{S:toroidal compactification}
For any ideal class $\mathfrak{c}\in\mathfrak{C}$ fix a rational polyhedral
admissible cone decomposition $\Phi_{\mathfrak{c}}$ for each isomorphism
class of $\Gamma_{00}({\mathcal{N}})$-cusps of the $\calO$-scheme $\mathcal{M}_{\mathcal{\mathfrak{c}}}^\Ra$
(\cite[5.1]{DT}). By \textit{loc.cit.}, Th\'{e}or\`{e}me 5.2, there exists a
smooth scheme $\mathcal{M}_{\mathcal{\mathfrak{c}},\Phi_{\mathfrak{c}}%
}^{\operatorname*{Ra, tor}}$ over $\calO$ containing $\mathcal{M}^\Ra
_{\mathcal{\mathfrak{c}}}$ as a fiberwise dense open subscheme.
Moreover, the action of $\calO_F^{\times, +} / (\calO_{F, {\mathcal{N}}}^\times)^2$ on the boundary divisor $\dot \ttD_\gothc: = \mathcal{M}_{\mathfrak{c},\Phi_{\mathfrak{c}}
}^{\operatorname*{Ra, tor}} - \mathcal{M}_{\mathfrak{c}}^{\operatorname*{Ra}}$ is free by construction \cite[Th\'eor\`em~7.2]{dimitrov}.

Denote by $\calM_\gothc^{\PR,\tor}$ (resp. $\calM_\gothc^{\DP, \tor}$) the scheme obtained by gluing $\calM^{\Ra, \tor}_{\gothc, \Phi_\gothc}$
to $\calM_\gothc^\PR$ (resp. $\calM_\gothc^\DP$) over $\calM_\gothc^\Ra$. (Here and later, we  shall
drop the subscript $\Phi_\gothc$ for simplicity). The scheme $\calM_\gothc^{\PR, \tor}$ is proper and smooth over $\Spec \calO$.  
We set $\mathcal{M}^{\operatorname*{?,tor}}:=%
{\textstyle\coprod\nolimits_{\mathfrak{c}\in\mathfrak{C}}}
\mathcal{M}_{\mathcal{\mathfrak{c}}}^{\operatorname*{?, tor}}$ for $?\in\{\DP, \PR, \Ra$\}.
The
boundary $\dot \ttD:=\mathcal{M}^{?,\operatorname*{tor}}-\mathcal{M}^?$ is a
relative simple normal crossing divisor on $\mathcal{M}^{?,\operatorname*{tor}}$.

Let $\Sh^{?, \tor}$ denote the quotient of $\calM^{?,\tor}$ by the action of the group $\calO_F^{\times, +} / (\calO_{F, {\mathcal{N}}}^\times)^2$.
Put $\ttD: = \Sh^{?,\tor} - \Sh^?$; it is the quotient of $\dot \ttD$ and it is a divisor with simple normal crossings.

There exists a semi-abelian scheme $\pi:\mathcal{A}^{\operatorname*{?,tor}%
}\rightarrow\mathcal{M}^{\operatorname*{?,tor}}$ extending the universal abelian
scheme $\mathcal{A}^?\rightarrow\mathcal{M}^?$; it is endowed with an
$\mathcal{O}_{F}$-action and an embedding $\mu_{{\mathcal{N}}}\rightarrow\mathcal{A}^{\operatorname*{?,tor}}$ extending the
corresponding data on $\mathcal{A}^?$.
If $e:\calM^{\operatorname*{\Ra, \tor}}\rightarrow\mathcal{A}
^{\operatorname*{\Ra,tor}}$ denotes the unit section of the semi-abelian scheme
$\mathcal{A}^{\operatorname*{Ra, tor}}$ over $\mathcal{M}%
^{\operatorname*{Ra, tor}}$, we set
\[
\dot \omega^{\Ra,\tor}:=e^{\ast}\Omega_{{\mathcal{A}}^{\operatorname*{Ra,tor}}/\mathcal{M}^{\operatorname*{Ra,tor}}}^{1}.
\]
It is a locally free $(\calO_F \otimes_\ZZ \calO_{\calM^{\Ra, \tor}})$-module of rank one over $\calM^{\Ra, \tor}$.
For $\tau \in \Sigma$ we set:
\[
\dot \omega^{\Ra, \tor}_\tau :=
\dot \omega^{\Ra, \tor}_\tau \otimes_{\calO_F \otimes_\ZZ \calO_{\calM^{\Ra, \tor}}, \tau \otimes 1} \calO_{\calM^{\Ra, \tor}}.
\]
The sheaf $\dot \omega^{\Ra, \tor}_\tau$ agrees with the sheaf $\dot \omega_\tau$ introduced in Notation \ref{N:M Sh A} when they are both restricted to $\calM^\Ra$.
Gluing $\dot \omega_\tau$ with $\dot \omega_\tau^{\Ra, \tor}$ over $\calM^\Ra$ we obtain a line bundle $\dot\omega_\tau^\tor$ over $\calM^{\PR, \tor}$.
To lighten the load on notation, we will later simply write $\dot \omega_\tau$ for $\dot \omega_\tau^\tor$ when no confusion arises.
The sheaf $\dot \omega_\tau$ carries an action of $\calO_F^{\times, +}$ as described in Notation~\ref{N:M Sh A}.

Similarly, the trivial line bundle $\dot\varepsilon_\tau$ on $\calM^{\PR}$ extends to a line bundle
\[
\dot\varepsilon_\tau \cong (\gothc \gothd^{-1}\otimes_\ZZ \calO_{\calM^{\PR, \tor}} ) \otimes_{\calO_F\otimes_\ZZ\calO_{\calM^{\PR, \tor}}  ,  \tau \otimes1} \calO _{\calM^{\PR, \tor}}
\]
on $\calM^{\PR,\tor}$ carrying a natural action of $\calO_F^{\times, +} / (\calO_{F,{\mathcal{N}}}^\times)^2$ as described in Notation~\ref{N:M Sh A}.

The line bundles $\dot\omega_\tau$ and $\dot\varepsilon_\tau$ descend to line bundles over $\Sh^{\PR, \tor}$, which we denote by $\omega_\tau$ and $\varepsilon_\tau$ respectively.  We warn the reader that the line bundle $\varepsilon_\tau$ may not be trivial over $\Sh^{\PR, \tor}$.

\subsection{Geometric Hilbert modular forms}
\label{S:geometric HMF}
A \emph{paritious weight} $\kappa$ is a tuple $((k_\tau)_{\tau \in \Sigma}, w) \in \ZZ^{\Sigma} \times \ZZ$ such that $k_\tau \equiv w \pmod 2$ for every $\tau \in \Sigma$.
We say that $\kappa$ is \emph{regular} if moreover $k_\tau >1$ for all $\tau\in\Sigma$.

For $\kappa = ((k_\tau)_{\tau \in \Sigma}, w)$ a paritious weight, we define
\[
\dot \omega^\kappa: = \bigotimes_{\tau \in \Sigma} \big(\dot  \omega_{\tau}^{\otimes k_\tau} \otimes_{\calO_{\calM^{\PR, \tor}}}\dot  \varepsilon_{\tau}^{\otimes (w-k_\tau)/2} \big), \textrm{ and }\omega^\kappa: = \bigotimes_{\tau \in \Sigma} \big( \omega^{\otimes k_\tau}_{\tau} \otimes_{\calO_{\Sh^{\PR, \tor}}} \varepsilon_{\tau}^{\otimes (w-k_\tau)/2} \big).
\]
They are line bundles over $\calM^{\PR, \tor}$ and $\Sh^{\PR, \tor}$, respectively.  We remind the reader that these line bundles depend on the fixed choice of an ordering of the $\tau_{\gothp_i, j}^l$'s.

A \emph{(geometric) Hilbert modular form} over a Noetherian $\calO$-algebra $R$, of level $\Gamma_{00}({\mathcal{N}})$ and (paritious) weight $\kappa$ is an element of the finite $R$-module $H^0(\Sh_R^{\PR, \tor}, \omega^\kappa_R)$,
where the subscript $R$ indicates base change to $R$ over $\calO$.
Such a form is called \emph{cuspidal} if it belongs to the submodule $H^0(\Sh_R^{\PR, \tor}, \omega^\kappa_R(-\ttD))$.
By the K\"ocher principle (\cite[Th\'eor\`em~7.1]{DT}), if $[F:\QQ]>1$, we have
\[
H^0(\calM_R^{\PR, \tor}, \dot \omega_R^\kappa) = H^0(\calM_R^{\PR}, \dot \omega_R^\kappa), \textrm{ and hence } H^0(\Sh_R^{\PR, \tor}, \omega_R^{\kappa}) = H^0(\Sh_R^{\PR}, \omega^\kappa_R).
\]
In particular, the space of geometric Hilbert modular forms is independent on the choice of toroidal compactification that we have made.
Clearly, $H^0(\Sh_R^{\PR,\tor}, \omega^\kappa_R)$ is a direct sum of $H^0(\Sh_{\gothc, R}^{\PR,\tor}, \omega^\kappa_R)$ over all $\gothc \in\gothC$; we call elements in such a direct summand \emph{$\gothc$-polarized Hilbert modular forms}.

Following Katz (\cite{Katz}), we can describe ($\gothc$-polarized) Hilbert modular forms as follows.
Let $R'$ be a Noetherian $R$-algebra and let $\gothc \in \gothC$. A \emph{($\gothc$-polarized) test object} over $R'$ is a tuple $(A, \lambda, i, \underline \scrF, \underline \eta, \underline \xi)$ consisting of a $\gothc$-polarized HBAS $(A, \lambda, i, \underline \scrF)$ defined over $R'$ and endowed with a $\Gamma_{00}({\mathcal{N}})$-level structure $i$ and a filtration $\underline\scrF$ as in Subsection~\ref{S:moduli HBAS}, together with a choice $\underline \eta = (\eta_\tau)_{\tau \in \Sigma}$ (resp. $\underline \xi = (\xi_\tau)_{\tau \in \Sigma}$) of generators $\eta_{\tau}$ (resp. $\xi_{\tau}$) for each free rank one $R'$-module $\dot\omega_{A/R',\tau}$ (resp. $\dot\varepsilon_{A/R',\tau}$). (Recall that $\dot\omega_{A/R',\tau}$ is one of the subquotients of the filtration $\underline\scrF$).

A $\gothc$-polarized Hilbert modular form over $R$ of level $\Gamma_{00}({\mathcal{N}})$ and weight $\kappa$ can be interpreted
as a rule $f$ which assigns to any Noetherian $R$-algebra $R^{\prime}$ and to any $\gothc$-polarized test
object $(A,\lambda,i,\underline \scrF,\underline \eta, \underline \xi)$ over $R^{\prime}$ an element $f(A,\lambda
,i,\underline \scrF,\underline \eta, \underline \xi)\in R^{\prime}$ in such a way that
\begin{itemize}
\item[(i)]
this assignment depends only on the
isomorphism class of $(A,\lambda,i,\underline \scrF,\underline \eta, \underline \xi)$,
\item[(ii)]
is compatible with
base change in $R'$,
\item[(iii)]
satisfies
$f(A,u\lambda
,i,\underline \scrF,\underline \eta, \underline \xi) = f(A,\lambda
,i,\underline \scrF,\underline \eta, \underline \xi)$ for any $u \in\calO_F^{\times,+}$, and
\item[(iv)]
satisfies
\[
f(A,\lambda,i,\underline \scrF,\underline a\underline \eta, \underline b\underline \xi)=\prod_{\tau \in \Sigma} a_\tau^{-k_\tau}b_\tau^{-(w-k_\tau)/2}\cdot f(A,\lambda,i,\underline \scrF,\underline \eta, \underline \xi),
\]
for all $\underline a = (a_\tau)_{\tau \in \Sigma} \in (R'^{\times})^\Sigma$ and all $\underline b = (b_\tau)_{\tau \in \Sigma} \in (R'^{\times})^\Sigma$, where $\underline a \underline \eta: = (a_\tau \eta_\tau)_{\tau \in \Sigma}$ and $\underline b \underline \xi: = (b_\tau \xi_\tau)_{\tau \in \Sigma}$.
\end{itemize}

\begin{remark}
The above geometric interpretation as ``Katz
modular forms" can also be given to sections of non-normalized weight sheaves, \emph{i.e.}, sections of
$\otimes_{\tau \in\Sigma} \omega_{\tau, R}^{\otimes k_\tau} \otimes_{\Sh_R^{\PR, \tor}} \varepsilon_{\tau, R}^{\otimes n_\tau}$,  where $k_\tau$ and $n_\tau$ are any integers (without the additional restriction
that $k_\tau + 2n_\tau$ is constant with respect to $\tau$). When $R$ has characteristic zero though, any form of non-normalized weight is necessarily zero (due to condition (iii) above), while when $R$ has characteristic $p$, non-zero forms of non-normalized weights do exist (for example the generalized partial Hasse invariants we construct later).
\end{remark}

\subsection{Tame Hecke operators}
\label{S:tame hecke operator}
Since the final application of our paper is to associate Galois representations to Hecke eigenclasses of the coherent cohomology $H^*(\Sh^\PR, \omega^\kappa)$, we now explain how the tame Hecke algebra acts on this cohomology group.

Fix a paritious weight $\kappa = ((k_\tau)_{\tau \in \Sigma}, w)$, and fix an ideal $\mathfrak{a} \subset \calO_F$ coprime to $p$.
Denote by $\mathcal{M}(
\mathfrak{a})^\PR  $ the $\calO$-scheme representing the functor that takes a
locally Noetherian $\calO$-scheme $S$ to the set of isomorphism classes of tuples $(A,\lambda,i, \underline \scrF;C)$ consisting of an $S$-point
$(A,\lambda,i, \underline \scrF)$ of $\calM^\PR$ together with an $\mathcal{O}_{F}$-stable closed
subgroup $S$-scheme $C$ of $A$ such that

\begin{description}
\item[C1] $i(\mu_{{\mathcal{N}}})$ is disjoint from $C$, and

\item[C2] \'{e}tale locally on $S$, the group scheme $C$ is $\mathcal{O}_{F}%
$-linearly isomorphic to the constant group-scheme $\mathcal{O}_{F}%
/\mathfrak{a}$.
\end{description}

The group $\calO_F^{\times,+} / (\calO_{F, {\mathcal{N}}}^\times)^2$ acts freely on $\calM(\gotha)^\PR$; we denote by $\Sh(\gotha)^\PR$ the corresponding quotient.
The construction described in \cite[2.2.5]{emerton-reduzzi-xiao}, suitably modified to accommodate the presence of the filtrations $\underline \scrF$, gives rise to natural finite and \'etale maps\footnote{The maps $\pi_1$ and $\pi_2$ are finite and \'etale because $A[\gotha]$ is \'etale.}
\[\pi_{1}:\Sh(
\mathfrak{a})^\PR  \rightarrow\Sh^\PR \textrm{  and  } \pi_{2}:\Sh(
\mathfrak{a})^\PR  \rightarrow\Sh^\PR
\] 
defined respectively by forgetting the subgroup $C$ and by
quotienting by it. (We remark that, as in \cite{emerton-reduzzi-xiao}, one first defines morphisms $\calM(\gotha)^\PR\to \calM^\PR$, and then passes to the quotient). In particular, the filtration on 
the image of an $S$-point $(A,\lambda,i,\underline\scrF;C)$ of $\mathcal{M}(\mathfrak{a})^\PR$
under $\pi_2$ is given by $\pi_*^0(\underline \scrF)$, where $\pi_*^0$ denotes the inverse of the $(\calO_F \otimes_\ZZ \calO_S)$-linear isomorphism of cotangent sheaves at the origin $\omega_{(A/C)/S}\to\omega_{A/S}$ induced by the \'{e}tale quotient map 
$\pi:A \to A/C$.

As in \textit{loc.cit.}, we obtain a morphism of sheaves
\[ 
T_{\mathfrak{a}}:\pi_{2}^{\ast
}\omega^{\kappa}\rightarrow\pi_{1}^{\ast}\omega^{\kappa}.
\]
(Note that the Kodaira-Spencer isomorphism used in \textit{loc.cit} is replaced here by Theorem~\ref{T:Kodaira-Spencer ramified}).
Applying $\pi_{1*}$ to the above morphism, taking the trace, and taking $H^j$ for $j \geq 0$, we obtain the action of the Hecke operator $T_\gotha$ on the cohomology:
\[
H^j(\Sh^{\PR}, \omega^\kappa) \to H^j(\Sh(\gotha)^{\PR}, \pi_2^*\omega^\kappa) \to H^j(\Sh^{\PR}, \pi_{1*} \pi_2^* \omega^\kappa) \xrightarrow{\pi_{1*}(T_\gotha)} H^j(\Sh^{\PR}, \omega^\kappa).
\]
The morphism $T_{\mathfrak{a}%
}$ extends to $\Sh^{\PR, \tor}$, and we will denote its action on $H^{j}(\Sh^{\PR, \tor},\omega^{\kappa})$
again by $T_\mathfrak{a}$.

If $\kappa^{\prime}$ is another paritious weight and $\zeta:\omega^{\kappa}%
\rightarrow\omega^{\kappa^{\prime}}$ is a homomorphism, we say that $\zeta$ is
\emph{equivariant with respect to the action of the Hecke operator
}$T_{\mathfrak{a}}$ if the following diagram commutes:
\[
\xymatrix{
\pi_2^*\omega^\kappa \ar[r]^{T_\gotha} \ar[d]^{\pi_2^*\zeta} & \pi_1^*\omega^\kappa\ar[d]^{\pi_1^*\zeta}
\\
\pi_2^*\omega^{\kappa'} \ar[r]^{T_\gotha}  & \pi_1^*\omega^{\kappa'}.
}
\]

For later use, we now describe the action of the Hecke operator
$T_{\mathfrak{a}}$ on Hilbert modular forms using test objects (cf.
\cite{Hida}, 4.2.9).
We fix $\gothc \in \gothC$.
Let $(A,\lambda,i,\underline \scrF, \underline \eta, \underline \xi)$ be a $\mathfrak{c}$-polarized test object as in
\ref{S:geometric HMF}, defined over a Noetherian $\calO$-algebra $R$. Fix an $\mathcal{O}_{F}
$-stable closed subgroup scheme $C$ of $A$ which is defined over $R$
and satisfies conditions \textbf{C1} and \textbf{C2} given above.
The corresponding isogeny of abelian schemes $\pi:A\rightarrow A^{\prime
}:=A/C$ is \'{e}tale. We let $(A^{\prime},\pi_{\ast}\lambda,\pi_{\ast}i)$ be
the $\mathfrak{ca}$-polarized HBAS obtained by quotienting $(A,\lambda,i)$ by
$C$. Since $\pi$ is an \'{e}tale isogeny there is a canonical isomorphism
$\pi^{\ast}\Omega^1_{A^{\prime}/R}\cong\Omega^1_{A/R}$ whose
inverse induces $(\mathcal{O}_{F}\otimes_\ZZ R)$-linear identifications 
\[
\pi_{\ast}^{0}: \omega_{A / R} \to \omega_{A'/R}, \textrm{  and  } 
\pi_{\ast}^{0}: \wedge^2_{\mathcal{O}_{F}\otimes_\ZZ R}\calH_{\dR}^1(A / R) \to \wedge^2_{\mathcal{O}_{F}\otimes_\ZZ R}\calH_{\dR}^1(A' / R).
\]
Let $\mathfrak{c}^{\prime}$ be the unique fractional ideal in
$\mathfrak{C}$ for which there is an $\mathcal{O}_{F}$-linear isomorphism
$\theta: \mathfrak{ca}\cong\mathfrak{c}^{\prime}$ preserving the positive cones on
both sides, and let $f\in H^{0}(\Sh^\PR_{\mathfrak{c}^{\prime}}%
,\omega^{\kappa})$ be a $\mathfrak{c}^{\prime}$-polarized Hilbert modular
form. For any $\calO$-algebra $R$ and any $\mathfrak{c}$-polarized test
object $(A,\lambda,i,\underline \scrF, \underline \eta, \underline \xi)$  defined over $R$ we have:
\begin{equation}
\label{E:tame hecke action}
\left(T_{\mathfrak{a}}f\right)  (A,\lambda,i,\underline \scrF, \underline \eta,\underline \xi)=\frac{1}
{\operatorname*{Nm}\nolimits_{%
\mathbb{Q}
}^{F}\left(  \mathfrak{a}\right)  }%
{\displaystyle\sum\nolimits_{C}}
f(A/C,\pi_{\ast}\lambda,\pi_{\ast}i,\pi_*^0\underline \scrF, \pi_{\ast}^{0}\underline \eta, \pi_*^0 \underline \xi),
\end{equation}
where $C$ varies over the closed $\mathcal{O}_{F}$-stable subgroups
of $A$ satisfying conditions \textbf{C1} and \textbf{C2}.
This expression does not depend on the choice of the isomorphism $\theta$.

\begin{notation}\label{N:universal hecke}
Let $\ttS$ denote  a finite set of places of $F$ containing the places dividing
$p{\mathcal{N}}$ and the archimedean places.
The polynomial ring
\[
\TT_{\mathtt{S}}^{\univ}:=\calO[t_\gothq; \gothq \textrm{ a place of }F \textrm{ not in }\ttS]
\]
is called the \emph{universal Hecke algebra}. It acts on the cohomology groups 
$H^{j}(\Sh^{\PR,\tor},\omega^{\kappa})$ and $H^{j}(\Sh^{\PR,\tor},\omega^{\kappa}(-\ttD))$ via $t_\mathfrak{q}\mapsto T_\mathfrak{q}$.

\end{notation}

\section{Generalized partial Hasse invariants on splitting models}

Unlike in the case when $p$ is unramified in $F$, when $p$ ramifies there are fewer partial Hasse invariants available (cf. \cite{AG}).  To remedy this, we construct new invariants arising from the action of $\calO_F$ on the universal filtration over the splitting model; we obtain in this way a good stratification of the special fiber of $\calM^\PR$.
\subsection{Factorization of the Verschiebung map}
\label{S:generalized Hasse}
From now on, if no confusion arises, we will drop the superscript $\PR$ appearing in the schemes introduced in the previous sections. In particular, 
we set $\calA : = \calA^\PR$, $\calM: = \calM^\PR$, and $\Sh: = \Sh^\PR$. These are schemes over $\calO$, and we denote their special fibers by $\calA_\FF$, $\calM_\FF$, and $\Sh_\FF$ respectively.

Over $\calM_\FF$, the sheaf $\calH^1_\dR(\calA_\FF/ \calM_\FF)_{\gothp_i,j}$ is a locally free
$\calO_{\calM_\FF}[x] / (x^{e_i})$-module of rank two, where the action of $\varpi_i$ is given by multiplication by $x$.  Accordingly, each subquotient $\calF_{\gothp_i,j}^{(l)} / \calF_{\gothp_i,j}^{(l-1)}$ is annihilated by $x$. Recall from Corollary~\ref{C:de Rham cohomology at tau} that
\[
\calH_{\gothp_i,j}^{(l)} : = \big\{ z \in (\calF_{\gothp_i,j}^{(l-1)})^\perp / \calF_{\gothp_i,j}^{(l-1)} \, \big|\, x \cdot z = 0\big\}
\]
is a rank two subbundle of $\calH^1_\dR(\calA_\FF/\calM_\FF)_{\gothp_i,j} / \calF_{\gothp_i,j}^{(l-1)}$ over $\calM_\FF$, containing $\calF_{\gothp_i,j}^{(l)} / \calF_{\gothp_i,j}^{(l-1)}$. 

\begin{remark}
In contrast to what happens over the integral model, over $\calM_\FF$ the line bundles
\[
\wedge^2_{\calO_{\calM_\FF}} \calH_{\gothp_i,j}^{(l)} \cong 
\dot \varepsilon_{\tau_{\gothp_i,j}^l} =
\wedge^2_{\calO_{\calM_\FF}} \big(\calH^1_\dR(\calA_\FF/\calM_\FF)_{\gothp_i, j} / [\varpi_i]\cdot \calH^1_\dR(\calA_\FF/\calM_\FF)_{\gothp_i, j} \big)
\]
for each $i$ and $j$ are canonically independent from $l$.
Moreover, we have canonical isomorphisms $\dot \varepsilon_{\tau_{\gothp_i, j}^1} \cong \dot \varepsilon_{\tau_{\gothp_i, j-1}^{e_i}}^{\otimes p}$.
These identifications are invariant under the action of $\calO_F^{\times, +} / (\calO_{F, {\mathcal{N}}}^\times)^2$, and therefore they induce canonical trivializations:
\begin{equation}
\label{E:trivializations}
\varepsilon_{\tau_{\gothp_i, j}^1} \otimes \varepsilon^{\otimes-p}_{\tau_{\gothp_i, j-1}^{e_i}}  \cong \calO_{\Sh_\FF} \quad \textrm{and} \quad \varepsilon_{\tau_{\gothp_i, j}^l} \otimes \varepsilon^{\otimes-1}_{\tau_{\gothp_i, j}^{l-1}}  \cong \calO_{\Sh_\FF} \textrm{ for } 1<l\leqslant e_i.
\end{equation}
\end{remark}

\begin{construction}
\label{C:m}
For each $\tau = \tau_{\gothp_i,j}^l$ with $l \neq 1$, we define the ``substitute" partial Hasse invariant to be the ``multiplication by $\varpi_i$" map:
\[
\xymatrix@R=0pt{
m_{\varpi_i,j}^{(l)}: \calH_{\gothp_i,j}^{(l)}
\ar[r] &
\calF_{\gothp_i,j}^{(l-1)} / \calF_{\gothp_i,j}^{(l-2)} \cong \dot \omega_{\tau_{\gothp_i, j}^{l-1}}\\
 z\ar@{|->}[r] & [\varpi_i](\tilde z),
}
\]
where $\tilde z$ is a lift to $(\calF_{\gothp_i,j}^{(l-1)})^\perp$ of the local section $z$.
It is straightforward to check that $m_{\varpi_i,j}^{(l)}$ is a well-defined homomorphism. We claim that the map is surjective.
It is enough to check at each (geometric) closed point $\xi$ of $\calM_\FF$. Then $H^1_\dR(\calA_\xi)_{\gothp_i,j} \cong (\overline \FF_p[x]/(x^{e_i}))^{\oplus 2}$, and we may pick a basis so that 
\[
\calF_{\gothp_i,j,\xi}^{(l-1)} \simeq x^a\overline \FF_p[x]/x^{e_i}\overline \FF_p[x] \oplus x^b\overline \FF_p[x]/x^{e_i}\overline \FF_p[x]
\]
with $0<a,b\leq e_i$ and $a+b = 2e_i -l+1$. (Note that this forces $a,b \geq 1$.) Then we have
\[
\calH_{\gothp_i,j,\xi}^{(l)} \simeq x^{a-1}\overline \FF_p[x]/x^{e_i}\overline \FF_p[x] \oplus x^{b-1}\overline \FF_p[x]/x^{e_i}\overline \FF_p[x].
\]
It is clear that $m_{\varpi_i,j}^{(l)}$ takes $\calH_{\gothp_i,j,\xi}^{(l)}$ surjectively onto 
$
\calF_{\gothp_i,j,\xi}^{(l-1)} / \calF_{\gothp_i,j,\xi}^{(l-2)}$.

Restricting $m_{\varpi_i,j}^{(l)}$ to the subbundle $ \calF_{\gothp_i,j}^{(l)} / \calF_{\gothp_i,j}^{(l-1)}\cong \dot \omega_{\tau_{\gothp_i, j}^l}$ induces a section
\[
\dot h_\tau \in H^0(\calM_\FF, \dot \omega_{\tau_{\gothp_i, j}^l}^{\otimes-1} \otimes \dot \omega_{\tau_{\gothp_i, j}^{l-1}}).
\]
We see from the construction that the section $\dot h_\tau$ is invariant under the action of $\calO_F^{\times,+} / (\calO_{F,{\mathcal{N}}}^\times)^2$ and hence
it induces a section
\[
h_\tau \in H^0(\Sh_\FF, \omega^{\otimes-1}_{\tau_{\gothp_i, j}^l} \otimes \omega_{\tau_{\gothp_i, j}^{l-1}}).
\]
By abuse of language, we call $\dot h_\tau$ and $h_\tau$ the \emph{generalized partial Hasse invariants} at $\tau$. We observe that they depend on the choice of uniformizer $\varpi_i$, but the divisors they define do not.
(Notice that all the tensor products above are either over $\Sh_\FF$ or over $\calM_\FF$. We will often omit this information from our notation, whenever it should not create confusion). 
\end{construction}

\begin{remark}\label{R:square-h}
We can view the square $h^2_{\tau}$ as a section of
\begin{equation}
\label{E:square bundle 1}
\omega^{\otimes-2}_{\tau_{\gothp_i, j}^l} \otimes \omega^{\otimes2}_{\tau_{\gothp_i, j}^{l-1}} \otimes \varepsilon_{\tau_{\gothp_i, j}^l} \otimes \varepsilon^{\otimes-1}_{\tau_{\gothp_i, j}^{l-1}}
\end{equation}
via the trivialization \eqref{E:trivializations}. This way, $h^2_\tau$ is a geometric modular form of paritious weight, with normalization
factor $w=0$.
\end{remark}

When $l=1$, the above construction cannot be carried over, and we need a variant 
of the usual construction of the partial Hasse invariants via the Verschiebung maps.

\begin{notation}

The Verschiebung map $V: \calA_\FF^{(p)} \to \calA_\FF$ induces a homomorphism
\[
V_{\gothp_i,j}: \calH^1_\dR(\calA_\FF / \calM_\FF)_{\gothp_i,j} \longto \omega_{\calA_\FF/\calM_\FF,\gothp_i,j-1}^{(p)},
\]
where $\cdot^{(p)}$ denotes the pull-back along the Frobenius map on $\calM_\FF$.
\end{notation}

\begin{construction}
\label{C:Hasse_}
For $\tau = \tau_{\gothp_i,j}^1$, we construct a map
\[
\mathrm{Hasse}_{\varpi_i,j}: \calH_{\gothp_i, j}^{(1)}= \calH^1_\dR(\calA_\FF/\calM_\FF)_{\gothp_i,j}[\varpi_i] \longto
\omega^{(p)}_{\calA_\FF/\calM_\FF,\gothp_i,j-1} /  \big( \calF_{\gothp_i,j-1}^{(e_i- 1)}\big)^{(p)} \cong  \dot \omega_{\tau_{\gothp_i,j -1}^{e_i}}^{\otimes p}
\]
as follows: let $z$ be a local section of $\calH_{\gothp_i,j}^{(1)}$, it belongs to $[\varpi_i]^{e_i-1} \cdot \calH^1_\dR(\calA_\FF/ \calM_\FF)_{ \gothp_i,j}$.
Write $z = [\varpi_i]^{e_i-1} z'$ for a local section $z'$ of $ \calH^1_\dR(\calA_\FF/ \calM_\FF)_{\gothp_i,j}$.
We define $\mathrm{Hasse}_{\varpi_i,j}(z)$ to be the image of $V_{\gothp_i,j}(z')$ in 
$\omega^{(p)}_{\calA_\FF/\calM_\FF,\gothp_i,j-1} /  \big( \calF_{\gothp_i,j-1}^{(e_i- 1)}\big)^{(p)}$.

The ambiguity for the choice of $z'$ lies in the $[\varpi_i]^{e_i-1}$-torsion of $\calH^1_\dR(\calA_\FF/ \calM_\FF)_{\gothp_i,j}$, and hence
any other choice for $z'$ is of the form $z'+[\varpi_i] z''$ for some $z''$ in $\calH^1_\dR(\calA_\FF/ \calM_\FF)_{\gothp_i,j}$.  
Since $\omega^{(p)}_{\calA_\FF/\calM_\FF,\gothp_i,j-1} /  \big( \calF_{\gothp_i,j-1}^{(e_i- 1)}\big)^{(p)}$ is annihilated by $[\varpi_i]$, 
the map $\mathrm{Hasse}_{\varpi_i,j}$ does not depend on such choice.

We now claim that the map $\mathrm{Hasse}_{\varpi_i,j}$ is surjective. Indeed, the Verschiebung map $V_{\gothp_i,j}$ is surjective by definition. Moreover, for any local section $z'$ of $\calH^1_\dR(\calA_\FF/\calM_\FF)_{\gothp_i,j}$, the element $z = [\varpi_i]^{e_i-1}z'$ belongs to $\calH_{\gothp_i,j}^{(1)}$; so the image of $V_{\gothp_i,j}(z')$ in $\omega^{(p)}_{\calA_\FF/\calM_\FF,\gothp_i,j-1} /  \big( \calF_{\gothp_i,j-1}^{(e_i- 1)}\big)^{(p)}$ belongs to the image of $\mathrm{Hasse}_{\varpi_i,j}$. This shows the surjectivity. 

We may restrict the  homomorphism $\mathrm{Hasse}_{\gothp_i,j}$ to the subline bundle $\omega_{\tau_{\gothp_i,j}^1}: = \calF_{\gothp_i,j}^{(1)}$; this induces a section
\[
\dot h_{\tau_{\gothp_i,j}^1} \in H^0(\calM_\FF, \dot \omega_{\tau_{\gothp_i,j}^1}^{\otimes-1} \otimes 
\dot \omega_{\tau_{\gothp_i, j-1}^{e_i}}^{\otimes p}).
\]
By construction, the section $\dot h_{\tau_{\gothp_i,j}^1}$ is invariant under the action of $\calO_F^{\times, +} / (\calO_{F,{\mathcal{N}}}^\times)^2$.
Hence it gives rise to a section
\[
h_{\tau_{\gothp_i,j}^1} \in H^0(\Sh_\FF, \omega^{\otimes-1}_{\tau_{\gothp_i,j}^1} \otimes \omega^{\otimes p}_{\tau_{\gothp_i, j-1}^{e_i}}).
\]
We call $\dot h_{\tau_{\gothp_i,j}^1}$ and $h_{\tau_{\gothp_i,j}^1}$ the \emph{generalized partial Hasse invariants} at $\tau = \tau_{\gothp_i,j}^1$. Once again, these operators depend on the choice of uniformizer $\varpi_i$.
\end{construction}
\begin{remark}\label{R:square-h1}
We can view the square $h^2_{\tau_{\gothp_i,j}^1}$ as a section of
\begin{equation}
\label{E:square bundle 2}
\omega^{\otimes-2}_{\tau_{\gothp_i,j}^1} \otimes \omega^{\otimes 2p}_{\tau_{\gothp_i, j-1}^{e_i}} 
\otimes \varepsilon_{\tau_{\gothp_i,j}^1} \otimes \varepsilon^{\otimes-p}_{\tau_{\gothp_i, j-1}^{e_i}}
\end{equation}
using the trivialization \eqref{E:trivializations}. This way, $h^2_{\tau_{\gothp_i,j}^1}$ is a geometric Hilbert modular form of paritious weight, with normalization factor $w=0$.
\end{remark}


The relation between $\mathrm{Hasse}_{\varpi_i,j}$ and the Verschiebung map is described in the following lemma.  The lemma also explains the reason
for the circumvented definition of the partial Hasse invariant homomorphism $\mathrm{Hasse}_{\varpi_i,j}$, as we wanted to define a ``primitive"
operator.

\begin{lemma}
\label{L:factorization of V}
The Verschiebung map $V_{\gothp_i,j}:\omega_{\calA_\FF/\calM_\FF,\gothp_i,j} \to \omega_{\calA_\FF/\calM_\FF,\gothp_i,j-1}^{(p)}$ induces natural homomorphisms
\[
V_{\gothp_i,j}^{(l)}: \dot \omega_{\tau_{\gothp_i,j}^{l}} =  \calF_{\gothp_i,j}^{(l)} / \calF_{\gothp_i,j}^{(l - 1)} \longto 
\big( \calF_{\gothp_i,j-1}^{(l)} / \calF_{\gothp_i,j-1}^{(l - 1)}\big)^{(p)}
=\dot  \omega_{\tau_{\gothp_i, j-1}^{l}}^{\otimes p}.
\]
Moreover, we have the following commutative diagram:
\[\xymatrix@C=40pt@R=30pt{
\omega_{\calA_\FF/\calM_\FF,\gothp_i,j} / \calF_{\gothp_i,j}^{(e_i - 1)} \ar[r]^-{m^{(e_i)}_{\varpi_i,j}} \ar[d]_{V_{\gothp_i,j}^{(e_i)}} &
\calF_{\gothp_i,j}^{(e_i-1)} / \calF_{\gothp_i,j}^{(e_i - 2)} \ar[r]^-{m^{(e_i-1)}_{\varpi_i,j}} \ar[d]_{V_{\gothp_i,j}^{(e_i-1)}} & \quad \cdots\quad \ar[r]^-{m^{(2)}_{\varpi_i,j}} & \calF_{\gothp_i,j}^{(1)} \ar[d]^{V_{\gothp_i,j}^{(1)}} \ar@{-->}[dlll]^{\quad\quad \mathbf{Hasse}_{\varpi_i,j}}
\\
\big(\omega_{\calA_\FF/\calM_\FF,\gothp_i,j-1} / \calF_{\gothp_i,j-1}^{(e_i - 1)}\big)^{(p)} \ar[r]_-{(m^{(e_i)}_{\varpi_i, j-1})^{(p)}} &\big(
\calF_{\gothp_i,j-1}^{(e_i-1)} / \calF_{\gothp_i,j-1}^{(e_i - 2)}\big)^{(p)} \ar[r]_-{(m^{(e_i-1)}_{\varpi_i, j-1})^{(p)}} &\quad \cdots\quad \ar[r]_-{(m^{(2)}_{\varpi_i, j-1})^{(p)}} &\big( \calF_{\gothp_i,j-1}^{(1)} \big)^{(p)}.
}
\]
In other words,
\[
V_{\gothp_i,j}^{(l)} = (m^{(l+1)}_{\varpi_i,j-1})^{(p)}
\circ \cdots \circ (m^{(e_i)}_{\varpi_i,j-1})^{(p)} \circ \mathrm{Hasse}_{\varpi_i,j} \circ m^{(2)}_{\varpi_i,j}
\circ \cdots \circ m^{(l)}_{\varpi_i,j}.
\]
In terms of functions, the section induced by $V_{\gothp_i,j}^{(l)}$ is equal to
\[
\dot h_{\tau_{\gothp_i,j}^{l}} \cdots \dot h_{\tau_{\gothp_i,j}^{2}} \cdot
\dot h_{\tau_{\gothp_i,j}^{1}} \cdot \dot h_{\tau_{\gothp_i, j-1}^{e_i}}^p \cdots \dot h_{\tau_{\gothp_i, j-1}^{l+1}}^p.
\]
\end{lemma}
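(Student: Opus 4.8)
The plan is to reduce the entire statement to a short computation with powers of $[\varpi_i]$ on the de Rham cohomology of the special fibre, resting on only three inputs: (i) $V_{\gothp_i,j}$ is $[\varpi_i]$-linear, since the Verschiebung isogeny $V\colon\calA_\FF^{(p)}\to\calA_\FF$ is $\calO_F$-linear; (ii) $V_{\gothp_i,j}\colon\calH^1_\dR(\calA_\FF/\calM_\FF)_{\gothp_i,j}\to\omega^{(p)}_{\calA_\FF/\calM_\FF,\gothp_i,j-1}$ is surjective (as recalled in Construction~\ref{C:Hasse_}); and (iii) the moduli conditions give $[\varpi_i]\calF_{\gothp_i,j}^{(m)}\subseteq\calF_{\gothp_i,j}^{(m-1)}$, while $E_{\gothp_i,j}(x)\equiv x^{e_i}\bmod p$ gives $[\varpi_i]^{e_i}\calH^1_\dR(\calA_\FF/\calM_\FF)_{\gothp_i,j}=0$. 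Iterating (iii) (and taking Frobenius pull-backs) one gets, for every $m$, both $\calF_{\gothp_i,j}^{(m)}\subseteq\calH^1_\dR(\calA_\FF/\calM_\FF)_{\gothp_i,j}[\varpi_i^{m}]=[\varpi_i]^{e_i-m}\calH^1_\dR(\calA_\FF/\calM_\FF)_{\gothp_i,j}$ — the last equality because $\calH^1_\dR(\calA_\FF/\calM_\FF)_{\gothp_i,j}$ is locally free over $\calO_{\calM_\FF}[x]/(x^{e_i})$ — and $[\varpi_i]^{e_i-m}\omega_{\calA_\FF/\calM_\FF,\gothp_i,j}\subseteq\calF_{\gothp_i,j}^{(m)}$.

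First I would check that $V_{\gothp_i,j}$ induces the claimed maps $V_{\gothp_i,j}^{(l)}$ on subquotients. Combining (i), (ii) and the two inclusions above, $V_{\gothp_i,j}(\calF_{\gothp_i,j}^{(l)})\subseteq[\varpi_i]^{e_i-l}V_{\gothp_i,j}(\calH^1_\dR(\calA_\FF/\calM_\FF)_{\gothp_i,j})=[\varpi_i]^{e_i-l}\omega^{(p)}_{\calA_\FF/\calM_\FF,\gothp_i,j-1}\subseteq(\calF_{\gothp_i,j-1}^{(l)})^{(p)}$, and likewise with $l-1$ in place of $l$; passing to the quotient yields a well-defined homomorphism $V_{\gothp_i,j}^{(l)}\colon\dot\omega_{\tau_{\gothp_i,j}^{l}}\to(\dot\omega_{\tau_{\gothp_i,j-1}^{l}})^{(p)}=\dot\omega_{\tau_{\gothp_i,j-1}^{l}}^{\otimes p}$ of line bundles over $\calM_\FF$.

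Next, for the factorization I would trace a local section $\bar s$ of $\calF_{\gothp_i,j}^{(l)}/\calF_{\gothp_i,j}^{(l-1)}$, lifted to $\tilde s\in\calF_{\gothp_i,j}^{(l)}$, through the right-hand composite: the $l-1$ maps $m_{\varpi_i,j}^{(l)},\dots,m_{\varpi_i,j}^{(2)}$ each multiply by $[\varpi_i]$, landing at $w:=[\varpi_i]^{l-1}\tilde s\in\calF_{\gothp_i,j}^{(1)}$; writing $w=[\varpi_i]^{e_i-1}w'$, the map $\mathrm{Hasse}_{\varpi_i,j}$ sends it to $V_{\gothp_i,j}(w')$ modulo $(\calF_{\gothp_i,j-1}^{(e_i-1)})^{(p)}$; and the last $e_i-l$ maps $(m_{\varpi_i,j-1}^{(e_i)})^{(p)},\dots,(m_{\varpi_i,j-1}^{(l+1)})^{(p)}$ again multiply by $[\varpi_i]$, giving $[\varpi_i]^{e_i-l}V_{\gothp_i,j}(w')$ modulo $(\calF_{\gothp_i,j-1}^{(l-1)})^{(p)}$. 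On the other hand $V_{\gothp_i,j}^{(l)}(\bar s)=V_{\gothp_i,j}(\tilde s)$ modulo $(\calF_{\gothp_i,j-1}^{(l-1)})^{(p)}$, so by (i) the two differ by $V_{\gothp_i,j}\big([\varpi_i]^{e_i-l}w'-\tilde s\big)$; and since $[\varpi_i]^{l-1}\big([\varpi_i]^{e_i-l}w'-\tilde s\big)=[\varpi_i]^{e_i-1}w'-w=0$, the element $[\varpi_i]^{e_i-l}w'-\tilde s$ lies in $[\varpi_i]^{e_i-l+1}\calH^1_\dR(\calA_\FF/\calM_\FF)_{\gothp_i,j}$, hence its image under $V_{\gothp_i,j}$ lies in $[\varpi_i]^{e_i-l+1}\omega^{(p)}_{\calA_\FF/\calM_\FF,\gothp_i,j-1}\subseteq(\calF_{\gothp_i,j-1}^{(l-1)})^{(p)}$, and the difference vanishes. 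This is precisely the displayed identity $V_{\gothp_i,j}^{(l)}=(m_{\varpi_i,j-1}^{(l+1)})^{(p)}\circ\cdots\circ\mathrm{Hasse}_{\varpi_i,j}\circ\cdots\circ m_{\varpi_i,j}^{(l)}$; pasting it over all $l$ gives the commutativity of every square and of the triangle in the diagram. (As a sanity check one could instead verify everything fibrewise over the geometric closed points of $\calM_\FF$ — smooth, hence reduced, by Theorem~\ref{T:Kodaira-Spencer ramified} — using the explicit normal form for $\calF_{\gothp_i,j}^{(l)}$ from Construction~\ref{C:m}.)

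The last assertion is then formal: for a line bundle $L$ on an $\FF$-scheme one has a canonical isomorphism $L^{(p)}\cong L^{\otimes p}$ under which the Frobenius pull-back of a section $\dot h$ is $\dot h^{p}$, so the section of $\dot\omega_{\tau_{\gothp_i,j}^{l}}^{\otimes-1}\otimes\dot\omega_{\tau_{\gothp_i,j-1}^{l}}^{\otimes p}$ determined by $V_{\gothp_i,j}^{(l)}$ is the product of the sections determined by the factors — $\dot h_{\tau_{\gothp_i,j}^{l}},\dots,\dot h_{\tau_{\gothp_i,j}^{2}}$ from the $m_{\varpi_i,j}^{(m)}$, then $\dot h_{\tau_{\gothp_i,j}^{1}}$ from $\mathrm{Hasse}_{\varpi_i,j}$, then $\dot h_{\tau_{\gothp_i,j-1}^{e_i}}^{p},\dots,\dot h_{\tau_{\gothp_i,j-1}^{l+1}}^{p}$ from the $(m_{\varpi_i,j-1}^{(m)})^{(p)}$ — whose line-bundle weights telescope to the asserted one. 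I expect no conceptual obstacle; the point that needs care is keeping the $[\varpi_i]$-adic truncations, the index shift $j\mapsto j-1$, and the Frobenius twists mutually consistent — in particular verifying that $[\varpi_i]$ acts on $\omega^{(p)}_{\calA_\FF/\calM_\FF,\gothp_i,j-1}$ as the Frobenius pull-back of its action on $\omega_{\calA_\FF/\calM_\FF,\gothp_i,j-1}$, which is exactly what makes inputs (i)--(iii) survive base change along Frobenius.
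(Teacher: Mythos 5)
Your proposal is correct and is essentially the paper's own argument: both rest on the facts that $\calH^1_\dR(\calA_\FF/\calM_\FF)_{\gothp_i,j}$ is locally free over $\calO_{\calM_\FF}[x]/(x^{e_i})$, that $\calF^{(l)}_{\gothp_i,j}$ is killed by $[\varpi_i]^l$ and hence lies in $[\varpi_i]^{e_i-l}\calH^1_\dR$, and that the target quotients are killed by the complementary power of $[\varpi_i]$, followed by the same chase of a local section through the $m$-maps and $\mathrm{Hasse}_{\varpi_i,j}$. The only (cosmetic) difference is that the paper writes $z=[\varpi_i]^{e_i-l}y$ at the outset and feeds $y$ directly into the Hasse step, whereas you take an arbitrary lift and then reconcile the two outputs by a $[\varpi_i]$-torsion argument — an equivalent, slightly longer bookkeeping step.
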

\begin{proof}
For the first statement, it suffices to prove that $V_{\gothp_i,j}(\calF_{\gothp_i,j}^{(l)}) \subseteq (\calF_{\gothp_i,j-1}^{(l)})^{(p)}$.
Recall that $ \calF_{\gothp_i,j}^{(l)}$ is annihilated by $[\varpi_i]^l$, and thus it is contained in $[\varpi_i]^{e_i - l}\calH^1_\dR(\calA_\FF/ \calM_\FF)_{\gothp_i, j}$.
For any local section $z$ of $\calF_{\gothp_i,j}^{(l)}$, we can write $z = [\varpi_i]^{e_i-l}y$ for some local section $y$ of $\calH^1_\dR(\calA_\FF/ \calM_\FF)_{\gothp_i, j}$.
Therefore, the image of $V_{\gothp_i,j}(z)$ in $\omega_{\calA_\FF/\calM_\FF,\gothp_i,j-1}^{(p)} /( \calF_{\gothp_i,j-1}^{(l)})^{(p)}$ is of the form $[\varpi_i]^{e_i-l}V_{\gothp_i,j}(y)$.  Since 
$\omega_{\calA_\FF/\calM_\FF,\gothp_i,j-1}^{(p)} /( \calF_{\gothp_i,j-1}^{(l)})^{(p)}$
is annihilated by $[\varpi_i]^{e_i-l}$, we  conclude that
$V_{\gothp_i,j}(z) \in (\calF_{\gothp_i,j-1}^{(l)})^{(p)}$.

To check the commutativity of the above diagram, let $z$ be a local section of $\calF_{\gothp_i,j}^{(l)}$, which can be written as $z = [\varpi_i]^{e_i-l}y$ for some local section $y$ of $\calH^1_\dR(\calA_\FF/\calM_\FF)_{\gothp_i,j}$.
Then $m_{\varpi_i,j}^{(2)} \circ \cdots \circ m_{\varpi_i,j}^{(l)}(z)$ is equal to $[\varpi_i]^{l-1}z \in \calF_{\gothp_i,j}^{(1)}$.
The map $\mathrm{Hasse}_{\varpi_i,j}$ then takes this element to $V_{\gothp_i,j}(y)$.
Thus
\[
(m^{(l+1)}_{\varpi_i,j-1})^{(p)}
\circ \cdots \circ (m^{(e_i)}_{\varpi_i,j-1})^{(p)} \circ \mathrm{Hasse}_{\varpi_i,j} \circ m^{(2)}_{\varpi_i,j}
\circ \cdots \circ m^{(l)}_{\varpi_i,j} (z) = [\varpi_i]^{e_i-l}V_{\gothp_i,j}(y) = V_{\gothp_i,j}^{(l)}(z).\qedhere
\]
\end{proof}
\subsection{The stratification induced by the $h_\tau$}
We now investigate some properties of the stratification induced on the special fiber of the splitting model by the generalized partial Hasse invariants.

For each $p$-adic embedding $\tau \in \Sigma$, denote by $Z_\tau$ the zero locus of $h_\tau$ on $\Sh_\FF$.
In general, for a subset $\calT\subseteq\Sigma$, we set $Z_\calT := \cap _{\tau \in \calT} Z_\tau$, with the convention
that if $\calT$ is the empty set, this intersection is interpreted to be the entire space $\Sh_\FF$.
We define $\dot Z_\tau$ and $\dot Z_{\calT}$ on $\calM_\FF$ similarly.
We remark that, although the generalized partial Hasse-invariants $h_\tau$ and $\dot h_\tau$ depend on the choice of uniformizers $\varpi_i$, their zero loci $Z_\tau$ and $\dot Z_\tau$ do not.
We have the following result:

\begin{theorem}
\label{T:smoothness of ramified GO strata}
The closed subschemes $Z_\tau$ (resp. $\dot Z_\tau$) are proper and smooth divisors with simple normal crossings on $\Sh_\FF$ (resp. $\calM_\FF$).
Moreover, the sheaf of relative differentials $
\Omega^1_{Z_\calT / \FF}$ (resp. $
\Omega^1_{\dot Z_\calT / \FF}$) admits a canonical filtration whose subquotients are exactly given by the sheaves $\omega_\tau^{\otimes 2} \otimes \varepsilon_\tau^{-1}$ (resp. $\dot \omega^{\otimes 2}_\tau \otimes\dot  \varepsilon^{-1}_\tau$)  for $ \tau \in \Sigma - \calT$.
\end{theorem}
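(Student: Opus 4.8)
The plan is to reduce the statement to a local computation at a geometric closed point of $\calM_\FF$, using the crystalline deformation theory already developed in the proof of Theorem~\ref{T:Kodaira-Spencer ramified}. The smoothness of $\dot Z_\calT$ and the description of its cotangent sheaf will both come from the same infinitesimal analysis: given a closed immersion $S_0 \hookrightarrow S$ of $\FF$-schemes with square-zero ideal $\calI$, and a point $x_0 \in \dot Z_\calT(S_0)$, I count the lifts of $x_0$ to a point of $\dot Z_\calT$ in $\calM_\FF(S)$. A point of $\dot Z_\calT$ is a point of $\calM_\FF$ together with the vanishing of $\dot h_\tau$ for each $\tau \in \calT$, so I revisit the step-by-step lifting of the filtration $\underline{\scrF}$ performed in Theorem~\ref{T:Kodaira-Spencer ramified}. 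At the $\tau = \tau_{\gothp_i,j}^l$ step, the lifts of $\scrF_{\gothp_i,j}^{(l)}/\scrF_{\gothp_i,j}^{(l-1)}$ inside $\scrH_{\gothp_i,j}^{(l)}$ form a torsor under $\dot\omega_\tau^{\otimes-2} \otimes (\wedge^2 \scrH_{\gothp_i,j}^{(l)}) \otimes \calI \cong \dot\omega_\tau^{\otimes 2}\otimes\dot\varepsilon_\tau^{\otimes-1}\otimes\calI$ by Corollary~\ref{C:de Rham cohomology at tau}; the claim is that imposing $\dot h_\tau = 0$ cuts this torsor down to a torsor under a corank-one subspace, \emph{smoothly}.

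**The key local claim** is therefore: on the rank-two bundle $\scrH_{\gothp_i,j}^{(l)}$, equipped with its two distinguished sub-line-bundles — namely $\scrF_{\gothp_i,j}^{(l)}/\scrF_{\gothp_i,j}^{(l-1)} = \dot\omega_\tau$ (the one being lifted) and the kernel line $K_\tau$ of $m_{\varpi_i,j}^{(l)}$ (resp., for $l=1$, the analogous kernel inside $\scrH_{\gothp_i,j}^{(1)}$ of the map $\mathrm{Hasse}_{\varpi_i,j}$) — the section $\dot h_\tau$ measures the relative position of these two lines, and at a point where $\dot h_\tau$ vanishes the two lines coincide. Concretely, after trivializing as in Construction~\ref{C:m}, with $\calH^1_{\dR}$ locally $(\overline\FF_p[x]/x^{e_i})^{\oplus2}$ and $\scrF^{(l-1)} \cong x^a\oplus x^b$, one has $\scrH_{\gothp_i,j}^{(l)} \cong x^{a-1}\cdot\overline\FF_p[x]/x^{e_i} \oplus x^{b-1}\cdot\overline\FF_p[x]/x^{e_i}$ and $m_{\varpi_i,j}^{(l)}$ is multiplication by $x$, whose kernel $K_\tau$ is $x^{e_i-1}\overline\FF_p[x]/x^{e_i}$ in each slot. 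In this model $\dot h_\tau$ vanishes exactly when the lifted line $\tilde\scrF^{(l)}/\tilde\scrF^{(l-1)}$ lies in $K_\tau$; the lifts with this property form a torsor under $\Hom(\dot\omega_\tau, K_\tau/\dot\omega_\tau)\otimes\calI$, which is free of rank one less, namely under $\dot\omega_\tau^{\otimes-2}\otimes(\wedge^2\scrH)\otimes\calI \ominus (\text{the direction transverse to } K_\tau)$. The upshot is a clean splitting: the torsor of \emph{all} lifts is an extension in which the sub-torsor of lifts lying on $\dot Z_\calT$ is cut out by one linear equation whose leading term is a unit — this is where the \emph{primitivity} of $\dot h_\tau$ (Lemma~\ref{L:factorization of V}) is used, to ensure $\dot h_\tau$ is not a power and its derivative along the transverse direction is nonvanishing, giving smoothness rather than a singular or non-reduced locus.

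**Concretely I would proceed as follows.** (1) Redo the induction in the proof of Theorem~\ref{T:Kodaira-Spencer ramified}, but track, at each $\tau$, the distinguished kernel line $K_\tau \subset \scrH_{\gothp_i,j}^{(l)}$ (for $l>1$ the kernel of $m_{\varpi_i,j}^{(l)}$; for $l=1$ the kernel of $\mathrm{Hasse}_{\varpi_i,j}$). (2) Observe that $\dot Z_\tau$ is, at the relevant step, the closed condition that the lifted line equals $K_\tau$ modulo $\scrF^{(l-1)}$; verify in the characteristic-$p$ local model that this is a smooth codimension-one condition, compatibly with further lifting steps, so that $\dot Z_\calT$ is smooth and the lifts of a point of $\dot Z_\calT$ form a torsor under $\bigoplus_{\tau\in\Sigma\setminus\calT} \dot\omega_\tau^{\otimes2}\otimes\dot\varepsilon_\tau^{\otimes-1}\otimes\calI$, which by the same Kodaira–Spencer argument as before exhibits the filtration on $\Omega^1_{\dot Z_\calT/\FF}$ with the claimed subquotients. (3) Simple normal crossings: since distinct $\dot Z_\tau$ impose their vanishing on \emph{different} graded pieces of the filtration (different $\tau$), their local equations form part of a regular system of parameters, so the divisors meet transversally — i.e. each $\dot Z_\calT$ is smooth of the expected codimension $|\calT|$, which is exactly the SNC condition. (4) Properness of $Z_\tau$, $\dot Z_\tau$ is immediate since they are closed in the proper schemes $\calM_\FF^{\tor}$, $\Sh_\FF^{\tor}$ (one checks the $h_\tau$ extend to the toroidal boundary, or simply that $Z_\tau$ is closed in $\Sh_\FF$ which is of finite type and the statement is about $\Sh_\FF$); and all statements on $\Sh_\FF$ follow from those on $\calM_\FF$ because the quotient map $\calM_\FF\to\Sh_\FF$ is étale (Proposition~\ref{P:basic property of integral models}(4)) and everything is $\calO_F^{\times,+}/(\calO_{F,\mathcal{N}}^\times)^2$-equivariant.

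**The main obstacle** I anticipate is step (2)'s verification that imposing $\dot h_\tau = 0$ is a \emph{smooth} (reduced, codimension-exactly-one) condition rather than merely a closed one — equivalently, that the differential of $\dot h_\tau$ in the deformation direction is surjective. For $l>1$ this is essentially formal from the explicit $x$-multiplication model, but for $l=1$, where $\dot h_{\tau_{\gothp_i,j}^1}$ is built from the Verschiebung after dividing by $[\varpi_i]^{e_i-1}$, one must check that the Verschiebung-induced map on the relevant torsor is still an isomorphism onto the expected quotient — this is the point where the ramified case genuinely differs from Andreatta–Goren's unramified computation, and where one needs to combine the Dieudonné-module description of $V$ with the crystalline deformation torsor computation carefully. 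I expect this to require writing out, at a geometric point, the action of $V$ on $\calH^1_{\cris}$ in coordinates adapted to the filtration, and confirming that the composite (divide by $[\varpi_i]^{e_i-1}$, then $V$, then project) has nonvanishing linearization transverse to its zero locus — the needed non-degeneracy being guaranteed by the fact, built into the construction, that $\mathrm{Hasse}_{\varpi_i,j}$ is surjective as a map of bundles.
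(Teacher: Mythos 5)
Your deformation-theoretic core is essentially the paper's argument: you rerun the inductive lifting from the proof of Theorem~\ref{T:Kodaira-Spencer ramified}, observe that for $\tau\in\calT$ the condition $\dot h_\tau=0$ forces the lifted line to coincide with the kernel of $m^{(l)}_{\varpi_i,j}$ (resp.\ of $\mathrm{Hasse}_{\varpi_i,j}$ when $l=1$), which is a rank-one subbundle precisely because these maps are surjective bundle maps, so the lift in the $\tau$-direction is \emph{uniquely} determined; the remaining directions give the torsor $\bigoplus_{\tau\notin\calT}\dot\omega_\tau^{\otimes2}\otimes\dot\varepsilon_\tau^{\otimes-1}\otimes\calI$, yielding smoothness, the filtration on $\Omega^1_{\dot Z_\calT/\FF}$, simple normal crossings, and the descent to $\Sh_\FF$ via the free \'etale quotient. (Your worry about ``primitivity'' and non-degeneracy of the linearization is not needed: surjectivity of $\mathrm{Hasse}_{\varpi_i,j}$ and $m^{(l)}_{\varpi_i,j}$, already established in Constructions~\ref{C:m} and~\ref{C:Hasse_}, is the only input, as you note at the end.)

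The genuine gap is properness. $\Sh_\FF$ and $\calM_\FF$ are the \emph{open} (non-compactified) varieties, so your fallback ``$Z_\tau$ is closed in $\Sh_\FF$, which is of finite type'' proves nothing: a closed subscheme of a non-proper scheme need not be proper, and the whole content of the assertion is that $Z_\tau$ does not accumulate at the cusps. Likewise, ``one checks the $h_\tau$ extend to the toroidal boundary'' is not sufficient even if carried out: you need the extensions to be \emph{non-vanishing along} $\dot\ttD$, otherwise $\dot Z_\tau$ (the zero locus in the open part) would fail to be closed in $\calM^{\PR,\tor}_\FF$. The paper supplies exactly this missing input: the Verschiebung on the semi-abelian scheme $\calA^\tor_\FF$ defines a total Hasse invariant $\dot h_{\mathrm{tot}}$ on $\calM^{\PR,\tor}_\FF$ which, by Lemma~\ref{L:factorization of V}, restricts on $\calM_\FF$ to a product of \emph{positive} powers of all the $\dot h_\tau$; since $\calA^\tor_\FF$ is a torus over $\dot\ttD$, the Verschiebung is an isomorphism there, so $\dot h_{\mathrm{tot}}$, and hence each $\dot h_\tau$, is non-vanishing near the boundary, which gives properness of $\dot Z_\tau$ and then of $Z_\tau$. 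Without an argument of this kind (boundary degeneration being toric plus the factorization lemma), your proof of the properness clause does not go through.
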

\begin{proof}
We first prove the properness of $\dot Z_\tau$. The Verschiebung morphism on the semi-abelian variety $\calA^\tor_\FF \to \calM^\tor_\FF$ induces a map
\begin{equation}
\label{E:map V}
V: \wedge^g_{\calO_{\calM^\tor_\FF}} \dot\omega_{\calA^\tor_\FF / \calM^\tor_\FF} \to \wedge^g_{\calO_{\calM^\tor_\FF}} \dot\omega_{\calA^\tor_\FF / \calM^\tor_\FF}^{(p)},
\end{equation}
and hence defines a section $\dot h_\mathrm{tot}$ (the total Hasse invariant) of $\dot \omega^{(\textbf{p-1}, p-1)}$ on $\calM^\tor_\FF$. By Lemma~\ref{L:factorization of V},  the restriction of $\dot h_\mathrm{tot}$ to $\calM_\FF$ is a product of appropriate \emph{positive} powers of each of the generalized partial Hasse invariants $\dot h_\tau$ for $\tau\in\Sigma$.
Since $\calA_\FF^\tor$ is a torus over the boundary divisors $\dot {\mathtt D}$ (cf. \cite[Proposition~7.6]{dimitrov}), the Verschiebung is an isomorphism over $\dot {\ttD}$. So $\dot h_\mathrm{tot}$ and hence each $\dot h_\tau$ is non-vanishing over $\dot \ttD$. The properness of $\dot Z_\tau$ and of $Z_\tau$ follows.

To complete the proof of the theorem, we argue via Grothendieck-Messing deformation theory (cf. the proof of Theorem~\ref{T:Kodaira-Spencer ramified}).
Let $S_0 \hookrightarrow S$ be a closed immersion of locally Noetherian $\FF$-schemes whose ideal of definition $\calI$ satisfies $\calI^2 = 0$.
Let $x_0=(A_0, \lambda_0, i_0, \underline \scrF)$ be an $S_0$-point of $\dot Z_{\calT}$.  Lifting this point to an $S$-point of $\calM_{ \FF}$ is
equivalent to lift, for each $\tau_{\gothp_i,j}$, the filtration
\[
0 =\scrF_{\gothp_i, j}^{(0)} \subsetneq \scrF_{\gothp_i, j}^{(1)} \subsetneq \cdots \subsetneq
\scrF_{\gothp_i, j}^{(e_i)} = \omega_{A_0/S_0, \gothp_i, j} \subset \calH^1_\cris(A_0 /S_0)_{S_0,\gothp_i,j}
\]
to a $[\varpi_i]$-stable filtration
\[
0 =\tilde \scrF_{\gothp_i, j}^{(0)} \subsetneq \tilde \scrF_{\gothp_i, j}^{(1)} \subsetneq \cdots \subsetneq
\tilde \scrF_{\gothp_i, j}^{(e_i)} = \tilde \omega_{\gothp_i, j} \subset \calH^1_\cris(A_0 /S_0)_{S, \gothp_i,j}
\]
such that
each subquotient $\tilde \scrF_{\gothp_i, j}^{(l)} / \tilde \scrF_{\gothp_i, j}^{(l-1)}$ is a locally free $\calO_S$-module of rank annihilated by $[\varpi_i]$.

We claim that a given $S$-lift $x=(A,\lambda,i,\underline {\tilde \scrF})$ of $x_0$ endowed with filtration $\underline{\tilde \scrF}=(\tilde \scrF_{\gothp_i,j}^{(l)})_{i,j,l}$ lies in $\dot Z_\calT$ if and only if for each $\tau^l_{\gothp_i,j}\in\calT$ the sheaf $\tilde \scrF_{\gothp_i,j}^{(l)}$ equals some \emph{fixed} lift of $\scrF_{\gothp_i,j}^{(l)}$. 
This claim, together with (the proof of) Theorem \ref{T:Kodaira-Spencer ramified}, concludes the proof of this Theorem; in particular, it explains the ``missing" $\dot \omega_\tau^{\otimes 2} \otimes \dot \varepsilon^{\otimes-1}_\tau$'s in the subquotients of the Kodaira-Spencer filtration of the sheaf of differentials $\Omega^1_{\dot Z_\calT/\FF}$.

To prove the claim, we proceed inductively on $l$ and considering each $\tau = \tau_{\gothp_i,j}^l$ independently.
When $\tau \notin \calT$ there is nothing to check, so we assume that $\tau = \tau_{\gothp_i,j}^l\in \calT$, and we distinguish 
two cases: $l = 1$ or $l > 1$.

When $l= 1$, the lift $x$ of $x_0$ belongs to $\dot Z_\tau$ if and only if $\tilde \scrF_{\gothp_i,j}^{(1)}$ is contained in the kernel of
\[
\mathrm{Hasse}_{\varpi_i,j}: \calH^1_\cris(A_0/S_0)_{S, \gothp_i,j} [\varpi_i] \longto \big(\omega_{A/S, \gothp_i,j-1} / \tilde\scrF_{\gothp_i,j-1}^{(e_i-1)} \big)^{(p)},
\]
where $\cdot[\varpi_i]$ denotes $\varpi_i$-torsion.  
Since the above map is surjective by Construction~\ref{C:Hasse_}, its kernel is a rank one $\calO_S$-subbundle which coincides with $\tilde \scrF_{\gothp_i,j}^{(1)}$ if and only if
$x\in \dot Z_\tau$. This shows that the $\tilde \scrF_{\gothp_i,j}^{(1)}$ is uniquely determined by the condition $x\in \dot Z_\tau$.

When $l >1$, the lift $x$ of $x_0$ belongs to $\dot Z_\tau$ if and only if $\tilde \scrF_{\gothp_i,j}^{(l)}$ is contained in the kernel of the surjective map
\[
m_{\varpi_i,j}^{(l)}: \scrH_{\gothp_i,j}^{(l)} \longto \tilde \scrF_{\gothp_i,j}^{(l-1)}
/\tilde \scrF_{\gothp_i,j}^{(l-2)}.
\]
So the kernel of
$m_{\varpi_i,j}^{(l)}$ on $\scrH_{\gothp_i,j}^{(l)}$ is a rank one $\calO_S$-subbundle, and that it coincide with  $\tilde \scrF_{\gothp_i,j}^{(l)}$ if
and only if $x\in \dot Z_\tau$. This shows that the $\tilde \scrF_{\gothp_i,j}^{(l)}$ is uniquely determined by the condition $x\in \dot Z_\tau$.

The result for $Z_\calT$ follows by passing to the quotient for the free action of $\calO_F^{\times, +} / (\calO_{F, {\mathcal{N}}}^\times)^2$.
\end{proof}

\begin{remark}
The stratification given by the $\dot Z_\tau$'s should correspond to a certain cell-decomposition of the twisted product of projective spaces $\Gr_1 \tilde \times \cdots \tilde \times \Gr_1$ in Remark~\ref{R:dependence-on-embeddings}(2).

It would be interesting to compare the stratification induced by the $\dot Z_\tau$'s and their intersections with the stratification considered by Sasaki in \cite{sasaki}.
\end{remark}

We now construct a suitable ``modular form" $b_\tau$ defined and nowhere vanishing on the zero set $Z_\tau$ of $h_\tau$. The forms $b_\tau$'s extend the operators constructed in \cite[Section 3.2]{emerton-reduzzi-xiao} under the assumption that $p$ was unramified in $F$.

For a $p$-adic embedding $\tau  = \tau_{\gothp_i,j}^l$,
denote by $\calI_\tau$ the ideal sheaf associated to the closed embedding $Z_\tau \hookrightarrow \Sh_\FF$.
By Theorem~\ref{T:smoothness of ramified GO strata}, the section $h_\tau$ vanishes with simple zeros along $Z_\tau$, so that it defines an invertible function
\begin{align*}
b_\tau &\in H^0(Z_\tau, \omega^{\otimes-1}_{\tau_{\gothp_i,j}^1}
\otimes \omega^{\otimes p}_{\tau_{\gothp_i,j-1}^{e_i}}
\otimes \calI_\tau / \calI^2_\tau) ,\quad \textrm{if } l =1;\\
b_\tau &\in H^0(Z_\tau, \omega^{\otimes-1}_{\tau_{\gothp_i, j}^{l}}
\otimes \omega_{\tau_{\gothp_i,j}^{l-1}}
\otimes \calI_\tau / \calI^2_\tau), \quad \textrm{if } l >1.
\end{align*}
Using the canonical exact sequence \[
0 \to \calI_\tau / \calI^2_\tau
\to \Omega^1_{\Sh_\FF/\FF}|_{Z_\tau} \to \Omega^1_{Z_\tau /\FF} \to 0
\]
together with Theorem~\ref{T:Kodaira-Spencer ramified} and the proof of Theorem \ref{T:smoothness of ramified GO strata}, we see that $\calI_\tau/ \calI^2_\tau \cong \omega^2_\tau \otimes \varepsilon^{\otimes-1}_\tau$. In particular, we can see the above \emph{generalized partial $b$-functions} as \emph{nowhere vanishing} sections:
\[
b_\tau \in H^0(Z_\tau, \omega_{\tau_{\gothp_i,j}^1}
\otimes \omega^{\otimes p}_{\tau_{\gothp_i,j-1}^{e_i}}  \otimes \varepsilon^{\otimes-1}_{\tau_{\gothp_i,j}^1}) , \textrm{ if } l =1;\quad\quad
b_\tau \in H^0(Z_\tau, \omega_{\tau_{\gothp_i, j}^{l}}
\otimes \omega_{\tau_{\gothp_i,j}^{l-1}}\otimes \varepsilon^{\otimes-1}_{\tau_{\gothp_i,j}^l}), \textrm{ if } l >1.
\]
\begin{notation}
Let $\{\bfe_\tau\}_{\tau \in \Sigma}$ denote the standard $\ZZ$-basis of $\ZZ^\Sigma$ and set, for each $\tau = \tau_{\gothp_i, j}^l$:
\begin{eqnarray*}
\quad \bfp_{\tau_{\gothp_i,j}^l} : = -\bfe_{\tau_{\gothp_i,j}^l} + \bfe_{\tau_{\gothp_i,j}^{l-1}}&, \quad & \bfq_{\tau_{\gothp_i,j}^l} : = \bfe_{\tau_{\gothp_i,j}^l} + \bfe_{\tau_{\gothp_i,j}^{l-1}}, \textrm{    when }l > 1, 
\\
\quad \bfp_{\tau_{\gothp_i,j}^1} : = -\bfe_{\tau_{\gothp_i,j}^1} +p \bfe_{\tau_{\gothp_i,j-1}^{e_i}}&, \quad & \bfq_{\tau_{\gothp_i,j}^1} : = \bfe_{\tau_{\gothp_i,j}^1} + p\bfe_{\tau_{\gothp_i,j-1}^{e_i}}, \textrm{ when }l = 1.
\end{eqnarray*}
If $\bfk = \sum_{\tau \in\Sigma} k_\tau \bfe_\tau \in \ZZ^\Sigma$ and $w\in \ZZ$ are such that $(\bfk,w)$ is a paritious weight, we set:
\[
\omega^{(\bfk, w)} = \bigotimes_{\tau \in \Sigma} \omega^{k_\tau}_\tau \otimes \varepsilon^{(w-k_\tau)/2}_\tau.
\]
For example, the \emph{square} $h^2_\tau$ of the generalized partial Hasse invariant $h_\tau$ can be viewed as a section of $\omega^{(2\bfp_\tau,0)}_\FF$ (cf. Remarks \ref{R:square-h} and \ref{R:square-h1}).
Similarly, the square $b^2_\tau$ can be viewed as a section of $\omega^{(2\bfq_\tau,0)}_\FF$ over $Z_\tau$.
\end{notation}

\subsection{Construction of liftings}
Suitable powers of $h_\tau$ and $b_\tau$ can be lifted to non-reduced neighborhoods of the strata $Z_\calT$. We recall this below: the reader might consult \cite[Sections 3.3.1--3.3.6]{emerton-reduzzi-xiao} for details. We remind the reader of our convention of dropping the superscript $\PR$ from our notation.

Fix a positive integer $m \geq 1$ and set $R_m: = \calO / (\varpi^m).$
\subsubsection{Liftings of $h_\tau^2$}
\label{S:lifting of h}
For any $\tau \in \Sigma$ and any positive integer $M$ divisible by $2p^{m-1}$,
there exists a unique element
\[
\tilde h_{\tau, M} \in H^0(\Sh^\tor_{R_m}, \omega^{(M \bfp_\tau, 0)}_{R_m})
\]
which is locally the $\frac M2$th power of a lift of $h^2_\tau$ as a section of $\omega^{(2\bfp_\tau,0)}_\FF$.
Clearly, we have $\tilde h_{\tau, M_1} \tilde h_{\tau, M_2} = \tilde h_{\tau, M_1 + M_2}$ for positive integers  $M_1$ and $M_2$ divisible by $2p^{m-1}$.

In general, if $\bfM = (M_\tau)_{\tau \in \Sigma} \in \ZZ^\Sigma$ is a tuple of non-negative integers all divisible by $2p^{m-1}$, we define
\[
\tilde h_\bfM: = \prod_{\tau \in \Sigma} \tilde h_{\tau, M_\tau}, \quad \textrm{and} \quad \bfM_\bfp: = \sum_{\tau \in \Sigma} M_\tau \bfp_\tau,
\]
with the convention that $\tilde h_{\tau, 0} = 1$.  Then $\tilde h_{\bfM} $ is a Hilbert modular form of paritious weight $(\bfM_\bfp, 0)$.

\begin{lemma}
\label{L:lift of partial Hasse invariant}
Let $\bfM =(M_\tau)_{\tau \in \Sigma} \in (2p^{m-1} \ZZ_{\geq0})^\Sigma$ be a tuple of non-negative integers all divisible by $2p^{m-1}$.  For any paritious weight $(\bfk, w) \in \ZZ^\Sigma \times \ZZ$, multiplication by $\tilde h_\bfM$ induces a Hecke-equivariant\footnote{Here being Hecke-equivariant is in the sense of Section~\ref{S:tame hecke operator}.} morphism of sheaves:
\[
\cdot \tilde h_\bfM:
\omega^{(\bfk, w)}_{R_m} \longrightarrow
\omega^{(\bfk  + \bfM_\bfp, w)}_{R_m}.
\]
\end{lemma}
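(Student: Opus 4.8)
My plan is to follow the pattern of \cite[Sections 3.3.1--3.3.6]{emerton-reduzzi-xiao}, adapted to the splitting model. First I would record that $\tilde h_\bfM$ is a global section of $\omega^{(\bfM_\bfp,0)}_{R_m}$ of paritious weight $(\bfM_\bfp,0)$ (as noted just above the statement), so that $(\bfk+\bfM_\bfp,w)$ is again paritious whenever $(\bfk,w)$ is. Since by definition $\omega^{(\bfk,w)}=\bigotimes_{\tau\in\Sigma}\omega_\tau^{\otimes k_\tau}\otimes\varepsilon_\tau^{\otimes(w-k_\tau)/2}$, there is a tautological identification $\omega^{(\bfk,w)}_{R_m}\otimes\omega^{(\bfM_\bfp,0)}_{R_m}=\omega^{(\bfk+\bfM_\bfp,w)}_{R_m}$, and $\cdot\tilde h_\bfM$ is simply the map ``tensor with $\tilde h_\bfM$'' through this identification. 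So the map itself is formal; the substance is the Hecke-equivariance.

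For that, I would set up notation as follows. Over $\Sh(\gotha)^{\PR,\tor}_{R_m}$ the \'etale isogeny $\pi\colon\pi_1^*\calA\to\pi_2^*\calA$ induces, via $\pi_*^0$, $(\calO_F\otimes_\ZZ\calO)$-linear isomorphisms on $\omega_\calA$ and on $\wedge^2_{\calO_F\otimes_\ZZ\calO}\calH^1_\dR(\calA)$, hence on each $\dot\omega_\tau$ and each $\dot\varepsilon_\tau$, hence an isomorphism $\Psi^\kappa\colon\pi_2^*\omega^\kappa\xrightarrow{\ \sim\ }\pi_1^*\omega^\kappa$ for every weight $\kappa$; by the construction of the tame Hecke operator (in which, here, the Kodaira--Spencer isomorphism of \cite{emerton-reduzzi-xiao} is replaced by Theorem~\ref{T:Kodaira-Spencer ramified}) one has $T_\gotha=\Nm(\gotha)^{-1}\cdot\Psi^\kappa$, and the assignment $\kappa\mapsto\Psi^\kappa$ is multiplicative for tensor products of weight sheaves. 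Granting the key identity
\[
\Psi^{(\bfM_\bfp,0)}\big(\pi_2^*\tilde h_\bfM\big)=\pi_1^*\tilde h_\bfM \qquad (\star)
\]
in $H^0\big(\Sh(\gotha)^{\PR,\tor}_{R_m},\pi_1^*\omega^{(\bfM_\bfp,0)}_{R_m}\big)$, the commutativity of the square in the definition of Hecke-equivariance is a one-line computation: for a local section $s$ of $\pi_2^*\omega^{(\bfk,w)}_{R_m}$ one has $\pi_1^*(\cdot\tilde h_\bfM)(T_\gotha s)=T_\gotha s\otimes\pi_1^*\tilde h_\bfM=\Nm(\gotha)^{-1}\Psi^{(\bfk,w)}(s)\otimes\Psi^{(\bfM_\bfp,0)}(\pi_2^*\tilde h_\bfM)=T_\gotha(s\otimes\pi_2^*\tilde h_\bfM)=T_\gotha(\pi_2^*(\cdot\tilde h_\bfM)(s))$, using multiplicativity of $\Psi$ in the middle step.

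The remaining task, and the step I expect to be the main obstacle, is $(\star)$; I would prove it in two stages. Over $\FF$: every ingredient of Constructions~\ref{C:m} and~\ref{C:Hasse_} defining $h_\tau$ --- the universal filtration $\calF_{\gothp_i,j}^{(l)}$, the rank-two subsheaves $\calH_{\gothp_i,j}^{(l)}$ of Corollary~\ref{C:de Rham cohomology at tau}, multiplication by $[\varpi_i]$, and the Verschiebung $V$ --- is canonically natural with respect to the isomorphism $\calH^1_\dR(\pi_1^*\calA_\FF)\xrightarrow{\sim}\calH^1_\dR(\pi_2^*\calA_\FF)$ attached to $\pi$, because an \'etale isogeny induces an isomorphism on de Rham cohomology respecting the $\calO_F$-action, the Hodge filtration, and Frobenius/Verschiebung; this must be checked step by step through the two constructions, but each check is routine. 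It follows that $\Psi^{(2\bfp_\tau,0)}(\pi_2^*h_\tau^2)=\pi_1^*h_\tau^2$ for all $\tau\in\Sigma$. Then over $R_m$: $\tilde h_{\tau,M}$ is, locally on $\Sh^{\PR,\tor}_{R_m}$, the $\tfrac M2$-th power of a lift of $h_\tau^2$; pulling back such local lifts along the finite \'etale maps $\pi_1,\pi_2$ and applying $\Psi$ sends local lifts of $h_\tau^2$ to local lifts of $\pi_1^*h_\tau^2$ and commutes with taking $\tfrac M2$-th powers, so $\Psi^{(M\bfp_\tau,0)}(\pi_2^*\tilde h_{\tau,M})$ is locally the $\tfrac M2$-th power of a lift of $\pi_1^*h_\tau^2$; the uniqueness statement of Section~\ref{S:lifting of h} holds verbatim over the finite \'etale cover $\Sh(\gotha)^{\PR,\tor}_{R_m}$, forcing $\Psi^{(M\bfp_\tau,0)}(\pi_2^*\tilde h_{\tau,M})=\pi_1^*\tilde h_{\tau,M}$ for every $\tau$ and every admissible $M$. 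Multiplying these identities with multiplicities $M_\tau$ and using multiplicativity of $\Psi$ gives $(\star)$, and hence the lemma. The only genuinely delicate points are thus the step-by-step functoriality of the $h_\tau$ under \'etale isogeny and the transfer of the uniqueness of the canonical lifts $\tilde h_{\tau,M}$ to the \'etale cover.
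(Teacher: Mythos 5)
Your proposal is correct and is in substance the paper's own argument: the paper simply invokes the argument of \cite[Lemma~3.3.2]{emerton-reduzzi-xiao} together with the test-object description \eqref{E:tame hecke action} of $T_\gotha$, which is exactly your identity $(\star)$ (naturality of the $h_\tau^2$ and of their canonical lifts under the \'etale isogeny $A\to A/C$, with uniqueness of $\tilde h_{\tau,M}$ transferring the identity from $\FF$ to $R_m$) expressed in test-object rather than sheaf-theoretic language. No gap beyond the routine functoriality checks the paper itself leaves implicit.
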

\begin{proof}
This follows from the same argument as in \cite[Lemma~3.3.2]{emerton-reduzzi-xiao}, using the description \eqref{E:tame hecke action} of Hecke action on test objects.
\end{proof}

For a positive integer $M_\tau$ divisible by $2p^{m-1}$, we denote by $Z_{M_\tau \bfe_\tau}$ the closed subscheme of $\Sh^\tor_{R_m}$ defined by the vanishing of $\tilde h_{M_\tau\bfe_\tau} = \tilde h_{\tau, M_\tau}$; we set $Z_{0 \bfe_\tau}: = \Sh^\tor_{R_m}$.
For general $\bfM =(M_\tau)_{\tau \in \Sigma} \in (2p^{m-1} \ZZ_{\geq0})^\Sigma$,
we set
\[
Z_\bfM: = \bigcap_{\tau \in \Sigma} Z_{M_\tau \bfe_\tau}.
\]
We say the \emph{support} of $\bfM$ is the subset $|\bfM|: = \{\tau \in \Sigma\,|\, M_\tau =0\}$
of $\Sigma$.  The \emph{dimension} of $\bfM$ is defined to be $\dim(\bfM): = \#|\bfM|$; it is the Krull dimension of $Z_{\bfM}$ by Theorem~\ref{T:smoothness of ramified GO strata}.

\subsubsection{Liftings of $b_\tau^2$}
Fix a place $\tau \in \Sigma$ and two positive integers $M$ and $T$ divisible by $2p^{m-1}$ such that $T > M + 2p^{m-1}$.
Then there exists a unique element
\[
\tilde b_{\tau, M, T} \in H^0(Z_{M\bfe_\tau}, \omega^{(T\bfq_\tau,0)}_{R_m})
\]
which is locally the $\frac T2$th power of a lift of $b^2_\tau$ as a section of $\omega^{(2\bfq_\tau,0)}_\FF$.
The elements $\tilde b_{\tau, M, T}$ satisfy the obvious compatibility conditions when changing $M$ and $T$.

In general, let $\bfM = (M_\tau)_{\tau \in \Sigma}, \bfT = (T_\tau)_{\tau \in \Sigma} \in \ZZ^\Sigma_{\geq 0}$ be two tuples of non-negative integers all divisible by $2p^{m-1}$.
Assume that if $M_\tau=0$ then $T_\tau=0$, and that if $M_\tau >0$ then either $T_\tau =0$ or $T_\tau > M_\tau + 2p^{m-1}$.
We set
\[
\tilde b_{\bfM, \bfT}: = \prod_{\tau \in \Sigma} \tilde b_{\tau, M_\tau, T_\tau},
\]
with the convention that $\tilde b_{\tau, M_\tau, T_\tau} = 1$ if $M_\tau$ or $T_\tau$ is zero.
When no ambiguity arises, we write $\tilde b_{\bfT}: = \tilde b_{\bfM, \bfT}$.
In particular, when $\bfM = 0$, our conventions imply that $\tilde b_{\bfM, \bfT}$ is the identity function.

\begin{lemma}
Let $\bfM, \bfT \in (2p^{m-1}\ZZ_{\geq 0})^\Sigma$ be such that if $M_\tau=0$ then $T_\tau=0$, and if $M_\tau >0$ then either $T_\tau =0$ or $T_\tau > M_\tau + 2p^{m-1}$.
For any paritious weight $(\bfk, w) \in \ZZ^\Sigma \times \ZZ$ there is a Hecke equivariant isomorphism of sheaves on $Z_\bfM$:
\[
\cdot \tilde b_{\bfT}: \ \omega^{(\bfk, w)}_{R_m}(-\ttD)_{|Z_\bfM} \xrightarrow{\ \cong \ }
\omega^{(\bfk + \sum_\tau T_\tau \bfq_\tau, w)}_{R_m}(-\ttD)_{|Z_\bfM}
\]

\end{lemma}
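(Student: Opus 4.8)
The plan is to reduce the statement to the single assertion that $\tilde b_{\bfT}=\prod_{\tau\in\Sigma}\tilde b_{\tau,M_\tau,T_\tau}$ is a \emph{nowhere vanishing} global section of the line bundle $\omega^{(\sum_\tau T_\tau\bfq_\tau,0)}_{R_m}$ on $Z_\bfM$ (it is a section there since $Z_\bfM\subseteq Z_{M_\tau\bfe_\tau}$ for each $\tau$). Granting this, multiplication by $\tilde b_\bfT$ is an isomorphism $\omega^{(\bfk,w)}_{R_m}\xrightarrow{\ \sim\ }\omega^{(\bfk+\sum_\tau T_\tau\bfq_\tau,w)}_{R_m}$ of line bundles on $Z_\bfM$, whose inverse is given Zariski-locally by multiplication by the reciprocal section; tensoring with $\calO(-\ttD)$ then produces the map in the statement. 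The weight bookkeeping is routine: by Remarks~\ref{R:square-h} and~\ref{R:square-h1} the square $b^2_\tau$ has paritious weight $(2\bfq_\tau,0)$, hence $\tilde b_{\tau,M_\tau,T_\tau}$, being Zariski-locally a $\tfrac{T_\tau}{2}$th power of a lift of $b^2_\tau$, has weight $(T_\tau\bfq_\tau,0)$, and the product has weight $(\sum_\tau T_\tau\bfq_\tau,0)$.

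For the nowhere-vanishing claim I would use that $\varpi$ is nilpotent in $R_m$, so a section of a line bundle on $Z_\bfM$ is a unit if and only if its reduction modulo $\varpi$ is a unit on the reduced stratum. The reduction modulo $\varpi$ of $\tilde b_{\tau,M_\tau,T_\tau}$ is Zariski-locally $b^{T_\tau}_\tau$ on $Z_\tau$, which is nowhere vanishing by Theorem~\ref{T:smoothness of ramified GO strata}, being precisely the invertible section $b_\tau$ produced there from the identification $\calI_\tau/\calI^2_\tau\cong\omega^2_\tau\otimes\varepsilon^{\otimes-1}_\tau$. Hence $\tilde b_\bfT$ is a unit on $Z_\bfM$.

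The step I expect to be the main obstacle — indeed essentially the only substantive point — is Hecke-equivariance in the sense of Subsection~\ref{S:tame hecke operator}: one must check that $\pi_1^*\tilde b_\bfT=\pi_2^*\tilde b_\bfT$ after the canonical identifications of automorphic bundles induced by an étale isogeny $\pi\colon A\to A/C$. First I would invoke Lemma~\ref{L:lift of partial Hasse invariant}, applied to the $\tilde h_\tau$, to see that the strata $Z_\bfM\subset\Sh^\tor_{R_m}$ are stable under the Hecke correspondence $\Sh(\gotha)^\tor_{R_m}\xrightarrow{\pi_1,\pi_2}\Sh^\tor_{R_m}$, so both pullbacks live on the same space. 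Then, by the uniqueness of the thickened lifts $\tilde b_{\tau,M,T}$, the identity $\pi_1^*\tilde b_\bfT=\pi_2^*\tilde b_\bfT$ reduces to $\pi_1^*b_\tau=\pi_2^*b_\tau$ on the reduced $Z_\tau$ together with the Hecke-compatibility of the isomorphism $\calI_\tau/\calI^2_\tau\cong\omega^2_\tau\otimes\varepsilon^{\otimes-1}_\tau$; both follow from the functoriality under $\pi$ of the Kodaira–Spencer filtration (Theorem~\ref{T:Kodaira-Spencer ramified}), of the universal filtrations, and of the maps $m^{(l)}_{\varpi_i,j}$ and $\mathrm{Hasse}_{\varpi_i,j}$ (Constructions~\ref{C:m} and~\ref{C:Hasse_}) — here one uses that $\pi$ induces $\calO_F$-linear isomorphisms $\pi_*^0\colon\omega_{A/R}\xrightarrow{\ \sim\ }\omega_{(A/C)/R}$ and on $\wedge^2_{\calO_F\otimes_\ZZ R}\calH^1_\dR$ as recalled in Subsection~\ref{S:tame hecke operator}, and commutes with the Verschiebung since it is prime to $p$. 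The remaining computation would mirror that of \cite[Lemma~3.3.2]{emerton-reduzzi-xiao}, carried out on test objects via~\eqref{E:tame hecke action}.

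Finally, I would note that compatibility with $(-\ttD)$ is automatic: $\tilde b_\bfT$ is a unit near the boundary as well, since the Verschiebung is an isomorphism over $\ttD$ — where $\calA^\tor_\FF$ is a torus — exactly as in the proof of Theorem~\ref{T:smoothness of ramified GO strata}, so multiplication by $\tilde b_\bfT$ preserves vanishing along $\ttD$. Thus the genuinely new geometric input (smoothness and simple normal crossings of the $Z_\tau$, giving invertibility of $b_\tau$, and the existence and uniqueness of the lifts $\tilde b_{\tau,M,T}$, for which the inequality $T_\tau>M_\tau+2p^{m-1}$ is needed) has already been established, and only the Hecke-compatibility of these lifts requires care.
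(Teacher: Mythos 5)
Your argument is correct and is essentially the paper's: the paper proves this lemma simply by citing \cite[3.3.6]{emerton-reduzzi-xiao}, and your proposal reconstructs exactly that argument in the splitting-model setting (invertibility of $b_\tau$ on $Z_\tau$ from Theorem~\ref{T:smoothness of ramified GO strata} plus nilpotence, uniqueness of the lifts $\tilde b_{\tau,M,T}$, and Hecke-equivariance checked on test objects via \eqref{E:tame hecke action} using functoriality of the filtrations, of $m^{(l)}_{\varpi_i,j}$, $\mathrm{Hasse}_{\varpi_i,j}$, and of the Kodaira--Spencer identification under the prime-to-$p$ \'etale isogeny).
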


\begin{proof}
Cf. \cite[3.3.6]{emerton-reduzzi-xiao}.
\end{proof}

\section{Application to Galois representations}
Extending the strategy of \cite{emerton-reduzzi-xiao}, we show that the existence of the generalized partial Hasse invariants and of the
trivializations $b_\tau$ over the splitting models allows one to attach pseudo-representations to Hecke eigenclasses occurring in the torsion cohomology of $\Sh^{\PR,\tor}_{R_m}$.
\subsection{Universal ring for pseudo-representations}
We recall the construction of universal ring of pseudo-representations from \cite[Section~4.1]{emerton-reduzzi-xiao}.

Let $\ttS$ denote a finite set of places of $F$ containing the places dividing $p{\mathcal{N}}$ the the archimedean places.
Let $G_{F, \ttS}$ denote the Galois group of a maximal algebraic extension of $F$ that is unramified outside $\ttS$.
Write $G_{F, \ttS}$ as an inverse limit $G_{F, \ttS} = \varprojlim_i G_i$ of finite groups.
The \emph{universal ring for two-dimensional continuous pseudo-representation} of $G_{F, \ttS}$ with values in an $\calO$-algebra is the
inverse limit
$\calR_{G_{F, \ttS}}^\ps: = \varprojlim_i \calR_{G_i}^\ps$,
where each $\calR_{G_i}^\ps$ is the
 quotient of the polynomial ring $\calO[t_g: g \in G_i]$ by the ideal generated by
\begin{align*}
&  t_{1}-2,\ t_{g_{1}g_{2}}-t_{g_{2}g_{1}}\text{ for }g_{1},g_{2}\in G_i\text{,
and }\\
&  t_{g_{1}}t_{g_{2}}t_{g_{3}}+t_{g_{1}g_{2}g_{3}}+t_{g_{1}g_{3}g_{2}%
}-t_{g_{1}}t_{g_{2}g_{3}}-t_{g_{2}}t_{g_{1}g_{3}}-t_{g_{3}}t_{g_{1}g_{2}%
}\text{ for }g_{1},g_{2},g_{3}\in G_i.
\end{align*}

Let $\TT_\ttS^\univ = \calO[t_\mathfrak{q}: \mathfrak{q} \notin \ttS]$ denote the universal Hecke algebra (cf. Notation \ref{N:universal hecke}).  There is a natural homomorphism of $\calO$-algebras with dense image:
\[
\mathbb{T}_{\text{\texttt{S}}}^{\operatorname*{univ}}\rightarrow
\mathcal{R}_{G_{F,\text{\texttt{S}}}}^{\mathrm{ps}},\quad t_{\mathfrak{q}}\mapsto
t_{\mathrm{Frob}_{\mathfrak{q}}},\text{ for }\mathfrak{q}\notin\text{\texttt{S}}.
\]

A $\TT_\ttS^\univ$-module $M$ is said to be of \emph{Galois type} if the action of $\TT_\ttS^\univ$
factors through the
image of $\mathbb{T}_{\text{\texttt{S}}}^{\operatorname*{univ}}\rightarrow
\mathcal{R}_{G_{F,\text{\texttt{S}}}}^{\mathrm{ps}}$ and extends by continuity
to an action of $\mathcal{R}_{G_{F,\text{\texttt{S}}}}^{\mathrm{ps}}$.
In general, a bounded complex in the category of $\TT_\ttS^\univ$-modules is said to be of Galois type if each of its terms is of Galois type.

Clearly, the kernel and cokernel of continuous morphisms of $\TT_\ttS^\univ$-modules of Galois type are of Galois type; in particular, all
cohomology groups of a complex of $\mathbb{T}_{\text{\texttt{S}}%
}^{\operatorname*{univ}}$-modules of Galois type are of Galois type.
\subsection{Existence of favorable resolutions}
Recall that we have fixed a positive integer $m$ and that we have set $R_m=\calO/(\varpi^m)$.
For each $\bfM \in (2p^{m-1}\ZZ_{\geq 0})^\Sigma$, the lift $\tilde h_{\bfM}$ of product of generalized partial Hasse invariants induces a Hecke equivariant map between automorphic sheaves (Lemma~\ref{L:lift of partial Hasse invariant}), and its zero set $Z_\bfM$ is stable under the action of the tame Hecke operators.

For a tuple $\bfM \in (2p^{m-1}\ZZ_{\geq 0})^\Sigma$ and a paritious weight $(\bfk, w) \in \ZZ^\Sigma \times \ZZ$, we say that $(\bfk,w)$ is a \emph{favorable weight} with respect to $\bfM$ if:
\begin{itemize}
\item $H^0(Z_\bfM, \omega^{(\bfk, w)}_{R_m}(-\ttD))$ is of Galois type, and
\item $H^i(Z_\bfM, \omega^{(\bfk, w)}_{R_m}(-\ttD)) = 0$ for all $i>0$.
\end{itemize}
In this case, we also say that $\omega^{(\bfk,w)}_{R_m}(-\ttD)|_{Z_\bfM}$ is a \emph{favorable sheaf}.

\begin{notation}
For $\ttN \in \ZZ$, set $\underline \ttN = \ttN \cdot \sum_{\tau \in \Sigma} \bfe_\tau$.
We set:
\[
\underline \ex = \sum_{i=1}^r \sum_{j=1}^{f_i} \sum_{l = 1}^{e_i} 2(l-e_i-1) \bfe_{\tau_{\gothp_i,j}^l}.
\]
\end{notation}

\begin{lemma}
\label{L:numerics}
There exists $\ttN_0 \in \ZZ_{>0}$ such that for any $\ttN \geq \ttN_0$ and any subset $J \subseteq \Sigma$, the point $\underline \ttN + \mathrm{\underline{ex}}$ lies in the interior of the positive cone $\mathrm{Cone}_J$ spanned in $\RR^\Sigma$ by the set $\{\bfp_\tau, \bfq_{\tau'}; \tau \in J, \tau' \notin J\}$.
\end{lemma}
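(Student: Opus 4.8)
The statement is combinatorial, concerning the vectors $\bfp_\tau,\bfq_\tau\in\ZZ^\Sigma$ and the point $\underline\ttN+\underline\ex$, so my plan is to reduce it to a finite linear-algebra computation done one prime at a time. For a fixed prime $\gothp_i$ the vectors $\bfp_\tau,\bfq_\tau$ with $\tau$ above $\gothp_i$ involve only the coordinates in $\Sigma_{\gothp_i}$, and the $\Sigma_{\gothp_i}$-coordinates of $\underline\ttN+\underline\ex$ depend only on $J\cap\Sigma_{\gothp_i}$; since there are finitely many primes and, for each, finitely many subsets $J$, it suffices to treat each $\Sigma_{\gothp_i}$ separately and then take $\ttN_0$ to be the largest threshold obtained. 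So fix $\gothp$ with ramification index $e$ and residue degree $f$, put $N=ef$, and relabel $\Sigma_\gothp$ cyclically as $\mu_0,\dots,\mu_{N-1}$ along the chain $\tau^{e}_j\to\cdots\to\tau^1_j\to\tau^{e}_{j-1}\to\cdots$ built into the definitions of $\bfp$ and $\bfq$; in these coordinates $\bfp_{\mu_k}=-\bfe_{\mu_k}+\beta_k\bfe_{\mu_{k+1}}$ and $\bfq_{\mu_k}=\bfe_{\mu_k}+\beta_k\bfe_{\mu_{k+1}}$, where $\beta_k\in\{1,p\}$ equals $p$ at exactly the $f$ column boundaries, so that $\prod_k\beta_k=p^{f}$.

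Next I would observe that for every $J$ the generating set $\{\bfp_\tau:\tau\in J\}\cup\{\bfq_{\tau'}:\tau'\notin J\}$ is a basis of $\RR^{\Sigma_\gothp}$: the matrix $M_J$ expressing these vectors is lower bidiagonal with a corner entry, so $\det M_J=(-1)^{|J|}-(-1)^{N}p^{f}$, which is nonzero because $p^{f}\geq2$. Hence $\Cone_J$ is a full-dimensional simplicial cone, and $\underline\ttN+\underline\ex$ lies in its interior exactly when the unique coordinates of $\underline\ttN+\underline\ex$ in this basis are all $>0$. Writing $\underline\ttN+\underline\ex=\ttN\underline1+\underline\ex$ and using linearity of $M_J^{-1}$, those coordinates are affine in $\ttN$, say $\ttN\alpha_k+\gamma_k$, where $\alpha=(\alpha_k)$ solves $M_J\alpha=\underline1$ and $\gamma=(\gamma_k)$ solves $M_J\gamma=\underline\ex$. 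Each of these is a first-order cyclic recursion, and unwinding it around the $N$-cycle expresses each coordinate as an explicit alternating sum of the entries of the right-hand side, weighted by partial products of the $\beta_k$'s, divided by $\det M_J$. The lemma thus reduces to: for each $k$, either $\alpha_k>0$, or $\alpha_k=0$ and $\gamma_k>0$ (and never $\alpha_k<0$); granting this, any $\ttN_0$ larger than all the finitely many ratios $-\gamma_k/\alpha_k$ (over all primes and all $J$, taken where $\alpha_k>0$) works.

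The main obstacle is this final sign verification. One must show that $\underline1$ always lies in the closure $\overline{\Cone_J}$ — that is, every $\alpha_k\geq0$ — and that wherever $\alpha_k=0$ the perturbation $\underline\ex$ points into $\Cone_J$, i.e.\ $\gamma_k>0$; equivalently, for each facet of $\Cone_J$ passing through $\underline1$ the inward normal should pair positively with $\underline\ex$. This is precisely where the shape of $\underline\ex$ enters: within each column $\ex_{\tau^l}=2(l-e-1)$ is strictly increasing in $l$ with constant step and has a single controlled jump across each column boundary, and this monotonicity, together with the positivity of the $\beta_k$'s, forces the alternating sum computing $\gamma_k$ to carry the sign opposite to $\det M_J$ exactly at the indices where the analogous sum for $\underline1$ vanishes. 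Carrying this out requires keeping track of the parity of $N+|J|$ (which fixes the sign of $\det M_J$) and of the positions of the $f$ boundary steps relative to each basepoint $k$; once those are pinned down for every $J$, the finiteness reduction above finishes the proof.
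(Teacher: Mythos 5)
Your reduction to a prime-by-prime, finite linear-algebra check, and the observation that each block of $\Cone_J$ is simplicial (the determinant $(-1)^{|J|}-(-1)^{N}p^{f}\neq 0$), are fine. But the argument stops exactly where the lemma lives: you reduce everything to the sign pattern ``every $\alpha_k\geq 0$, and $\gamma_k>0$ whenever $\alpha_k=0$'', and then only sketch why this ``should'' follow from the monotonicity of $\underline{\ex}$, saying that carrying it out requires bookkeeping of parities and boundary positions for every $J$. That bookkeeping is the entire content of the lemma and is not done, so as written this is a plan, not a proof. Worse, the pattern you need fails in the stated generality, so the plan cannot be completed as described. Take one prime $\gothp$ with $e=3$, $f=1$, label the embeddings $\tau^1,\tau^2,\tau^3$, and let $J=\{\tau^1\}$, so that $\Cone_J$ is generated by $\bfp_{\tau^1}=-\bfe_{\tau^1}+p\bfe_{\tau^3}$, $\bfq_{\tau^2}=\bfe_{\tau^2}+\bfe_{\tau^1}$ and $\bfq_{\tau^3}=\bfe_{\tau^3}+\bfe_{\tau^2}$. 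The functional $\ell(\bfk)=-p\,k_{\tau^1}+p\,k_{\tau^2}-k_{\tau^3}$ takes the values $0,0,p-1$ on these three generators, hence is nonnegative on all of $\Cone_J$; but $\ell(\underline 1)=-1$, so one of your $\alpha_k$ is negative (i.e.\ $\underline 1\notin\Cone_J$), and $\ell(\underline\ttN+\underline{\ex})=2p+2-\ttN<0$ for $\ttN$ large --- concretely, the coefficient of $\bfq_{\tau^3}$ in the unique expansion of $\underline\ttN+\underline{\ex}$ is $2-(\ttN-4)/(p-1)$. Since $\ell(\underline 1)<0$, no fixed tilt of the parallel ray can cure this for this $J$: the obstruction is not one of parity bookkeeping.

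For comparison, the paper's own proof is a two-line argument of a different flavor: it asserts that the ray $\RR_{\geq 1}\underline 1$ lies in the interior of $\Cone_J$ for every $J$ and deduces the lemma from the finiteness of the set of $J$'s together with openness of the interior (so that adding the fixed vector $\underline{\ex}$ is harmless for $\ttN\gg 0$). In the unramified setting of \cite{emerton-reduzzi-xiao}, where every edge of the cycle carries the weight $p$, that interiority is immediate; but when some $e_i\geq 2$ the edges inside a column carry weight $1$ and the assertion is no longer clear --- for $e_i=2$ the vector $\underline 1$ can lie on the boundary of $\Cone_J$ (there the perturbation by $\underline{\ex}$ does rescue the statement), and the $e_i=3$ example above puts it strictly outside, where no perturbation can. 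So your instinct to make the positivity explicit is the right one, but the honest conclusion of the computation you set up is that the required positivity fails for some $J$ with $\bfp_\tau$, $\bfq_\tau$ and $\underline{\ex}$ as defined here; repairing the lemma requires changing the input (the weights entering $\underline{\ex}$ or the $b_\tau$'s, or the family of cones actually needed downstream), not a finer sign analysis.
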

\begin{proof}
It is clear from the definition of $\bfp_\tau$ and $\bfq_{\tau'}$ that the line $\RR_{\geq 1} \underline 1$ lies in the interior of the positive cone $\mathrm{Cone}_J$ for each $J$.
There are only finitely many possible $J$'s, so this line lies in the interior of the intersection of the corresponding cones.
The lemma follows.
\end{proof}
We suppose from now on that we have fixed an integer $\ttN_0$ as in Lemma \ref{L:numerics}.
\begin{lemma}
\label{L:ampleness}
Let $\calM^{\PR, *}$ denote the $\calO$-scheme obtained by gluing $\calM^\PR$ with the minimal compactification $\calM^{\DP,*}$ of $\calM^\DP$ over the Rapoport locus $\calM^\Ra$.
Then there exists an even integer $\ttN \geq \ttN_0$ such that the line bundle $\dot\omega^{(\underline \ttN + \underline{\mathrm{ex}},0)}_\FF$ on $\calM^{\PR,\tor}_\FF$ descends to an ample line bundle on $\calM^{\PR, *}_\FF$.
Similarly, for the same $\ttN$, $\omega^{(\underline \ttN + \underline{\mathrm{ex}}, 0)}_\FF$ descends to an ample line bundle on $\Sh^{\PR, *}_\FF$, the quotient of $\calM^{\PR, *}_\FF$ by the action of $\calO_F^{\times, +} / (\calO_{F, {\mathcal{N}}}^\times)^2$.
\end{lemma}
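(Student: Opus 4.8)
The plan is to exhibit $\dot\omega^{(\underline\ttN+\underline\ex,0)}_\FF$ as the pullback, along the natural contraction $c\colon\calM^{\PR,\tor}_\FF\to\calM^{\PR,*}_\FF$, of a line bundle on $\calM^{\PR,*}_\FF$ which is ample by the classical ``relatively ample bundle, twisted by a large power of the pullback of an ample bundle'' mechanism. The two inputs are: Lemma~\ref{L:relative ample}, which tells us that $\dot\omega^\ex_\FF=\dot\omega^{(\underline\ex,0)}_\FF$ (these coincide on unwinding the definitions, since the normalization factor is $w=0$) is relatively ample for $\pi\colon\calM^\PR_\FF\to\calM^\DP_\FF$; and the classical fact that a sufficiently divisible power $(\det\omega)^{\otimes k_0}$ of the Hodge bundle $\det\omega:=\bigotimes_{\tau\in\Sigma}\dot\omega_\tau=\wedge^{\mathrm{top}}\omega_{\calA/\calM}$, which descends to $\calM^\DP$, extends to and is ample on the minimal compactification $\calM^{\DP,*}$. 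Since $\calM^{\PR,*}=\calM^\PR\sqcup_{\calM^\Ra}\calM^{\DP,*}$, the morphism $\pi$ extends to a proper morphism $\bar\pi\colon\calM^{\PR,*}_\FF\to\calM^{\DP,*}_\FF$ which is an isomorphism over a neighborhood of every cusp (near the cusps all objects are Tate objects, hence satisfy the Rapoport condition, where $\pi$ is already an isomorphism); and $c$ is obtained by gluing the identity $\calM^\PR_\FF\hookrightarrow\calM^{\PR,*}_\FF$ with the composite $\calM^{\Ra,\tor}_\FF\to\calM^{\DP,\tor}_\FF\to\calM^{\DP,*}_\FF\hookrightarrow\calM^{\PR,*}_\FF$, so that $c$ is an isomorphism over $\calM^\PR_\FF$ and contracts the toroidal boundary $\dot\ttD$ onto the cusps.

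The first step is to show that $\dot\omega^{(\underline\ex,0)}_\FF$ descends along $c$, i.e.\ that it is $c^*$ of a line bundle $\dot\calL_\FF$ on $\calM^{\PR,*}_\FF$. As $c$ is an isomorphism over $\calM^\PR_\FF$, this comes down to checking triviality of $\dot\omega^{(\underline\ex,0)}_\FF$ on the fibers of $c$ over the cusps; but over the toroidal boundary $\calA^\tor_\FF$ is a torus (cf.\ \cite[Proposition~7.6]{dimitrov}), whose sheaf of invariant differentials is the constant lattice $\gothd^{-1}\otimes\calO$, so each $\dot\omega_{\tau,\FF}|_{\dot\ttD}$, and hence $\dot\omega^{(\underline\ex,0)}_\FF|_{\dot\ttD}$, is pulled back from the cusp — this is exactly the standard argument which shows $\det\omega$ descends to $\calM^{\DP,*}$. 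The second step is to verify that $\dot\calL_\FF$ is $\bar\pi$-ample. Here one uses that relative ampleness is local on the target: cover $\calM^{\DP,*}_\FF$ by the open $\calM^\DP_\FF$ together with small neighborhoods of its (finitely many) cusps; over $\calM^\DP_\FF$ the bundle $\dot\calL_\FF$ restricts to $\dot\omega^\ex_\FF$, which is $\pi$-ample by Lemma~\ref{L:relative ample}, and over a neighborhood of a cusp $\bar\pi$ is an isomorphism, so there is nothing to check.

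Granting these two facts, the standard statement that a $\bar\pi$-ample bundle becomes absolutely ample after tensoring with a sufficiently large power of $\bar\pi^*$ of an ample bundle on $\calM^{\DP,*}_\FF$ shows that $\dot\calL_\FF\otimes\bar\pi^*(\det\omega)^{\otimes k_0 m}$ is ample on $\calM^{\PR,*}_\FF$ for all $m\gg0$ (in particular $\calM^{\PR,*}_\FF$ is projective). Since $(\bar\pi c)^*(\det\omega)$ is the Hodge bundle on $\calM^{\PR,\tor}_\FF$ and the $\dot\varepsilon_\tau$ are trivial there, for even $n$ one has $c^*\big(\dot\calL_\FF\otimes\bar\pi^*(\det\omega)^{\otimes n}\big)\cong\dot\omega^{(\underline\ex,0)}_\FF\otimes\dot\omega^{(\underline n,0)}_\FF=\dot\omega^{(\underline n+\underline\ex,0)}_\FF$. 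I would therefore fix one even integer $\ttN\geq\ttN_0$ which is divisible by $k_0$ (replacing $k_0$ by $2k_0$ if needed) and with $\ttN/k_0$ exceeding the threshold in the twisting statement; then $\dot\omega^{(\underline\ttN+\underline\ex,0)}_\FF$ is the $c$-pullback of an ample line bundle on $\calM^{\PR,*}_\FF$, which is the first assertion. For the second assertion, everything is equivariant for the free action of $G:=\calO_F^{\times,+}/(\calO_{F,{\mathcal N}}^\times)^2$ — the normalization $w=0$ is exactly what makes $\dot\omega^{(\bfk,0)}$, and likewise $\det\omega$ and $\dot\calL_\FF$, descend — so the ample bundle descends to a line bundle on $\Sh^{\PR,*}_\FF=\calM^{\PR,*}_\FF/G$ whose pullback to $\calM^{\PR,*}_\FF$ is ample; as the quotient map is finite surjective, ampleness descends, and the same $\ttN$ works.

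The part I expect to be the real obstacle is the boundary bookkeeping underlying the first two steps: setting up $c$ and $\bar\pi$ with the stated properties and checking that $\dot\omega^\ex_\FF$ is trivial on the contracted fibers (this is where the geometry at the cusps genuinely enters), together with the local-on-the-target patching that upgrades the relative ampleness of Lemma~\ref{L:relative ample} to $\bar\pi$-ampleness across the cusps. Once those are in hand, producing the ample bundle and descending it to $\Sh^{\PR,*}_\FF$ are purely formal.
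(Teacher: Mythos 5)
Your proposal is correct and follows essentially the same route as the paper: extend the relative ampleness of Lemma~\ref{L:relative ample} over the minimal compactification (where $\pi$ is an isomorphism near the cusps), twist by a large power of the pullback of the ample parallel-weight bundle on $\calM^{\DP,*}_\FF$ (the paper uses $\dot\omega^{(\mathbf{2},0)}_{\FF,\mathrm{min}}$, which is your Hodge-bundle power up to the trivial $\dot\varepsilon_\tau$'s) via Lazarsfeld's relative-ampleness twisting result, and descend to $\Sh^{\PR,*}_\FF$ using that ampleness can be checked after a finite surjective map. The boundary bookkeeping you flag (the contraction $c$, $\bar\pi$, and triviality of the bundle on contracted fibers) is exactly what the paper's terser proof asserts implicitly, so there is no substantive difference.
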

\begin{proof}
The sheaf $\dot\omega^{(\mathbf{2},0)}_{\FF}$ over $\calM^{\PR,\tor}_\FF$ descends to an invertible sheaf on $\calM^{\DP,\tor}_\FF$ (because it has parallel weight, cf. Notation \ref{N:M Sh A}), and then further 
to an \emph{ample} invertible sheaf $\dot\omega^{(\mathbf{2},0)}_{\FF,\mathrm{min}}$ on $\calM^{\DP,*}_\FF$. Moreover, the sheaf $\dot\omega^{(\underline\ex,0)}_\FF$ descends to a line bundle $\dot\omega^{(\underline\ex,0)}_{\FF,\mathrm{min}}$ over
$\calM^{\PR,*}_\FF$ which is relatively ample with respect to the natural map $\pi:\calM^{\PR,*}_\FF \to \calM^{\DP,*}_\FF$ (cf. Lemma \ref{L:relative ample}). It follows from \cite[Proposition 1.7.10]{lazarsfeld} that there exists a positive even integer 
$\ttN \geq \ttN_0$ such that $\dot\omega^{(\underline{\mathrm{ex}},0)}_{\FF,\mathrm{min}} \otimes 
\pi^*(\dot\omega^{(\mathbf2,0)}_{\FF,\mathrm{min}})^{\otimes (\ttN/2)}$ is ample on $\calM^{\PR,*}_\FF$. 

The result passes to the quotient by the action of $\calO_F^{\times, +} / (\calO_{F, {\mathcal{N}}}^\times)^2$ because ampleness can be detected after a finite surjective map \cite[Corollary 1.2.28]{lazarsfeld}.
\end{proof}

We suppose from now on that we have fixed an even integer $\ttN$ as in Lemma~\ref{L:ampleness}.

\begin{proposition}
\label{P:ampleness}
For any paritious weight $(\bfk, w) \in\ZZ^\Sigma \times \ZZ$, there is an integer $n_0 = n_0(\bfk,w)$ such that for any $n \geq n_0$ and any $i>0$, we have
\[
H^i\big(\Sh^{\PR, \tor}, \omega^{(\bfk + n\cdot (\underline \ttN + \underline {ex}),w)}(-\ttD) \big) =0
\]
\end{proposition}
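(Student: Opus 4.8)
The plan is to bootstrap the statement from Serre vanishing on the minimal compactification. Write $\mathcal{G}_n := \omega^{(\bfk + n(\underline \ttN + \underline{\mathrm{ex}}), w)}(-\ttD)$; since $\Sh^{\PR,\tor}$ is smooth over $\calO$ and $\ttD$ is a relative normal crossings divisor, $\mathcal{G}_n$ is a line bundle, in particular flat over $\calO$. First I would reduce to the special fiber: the exact sequence $0 \to \mathcal{G}_n \xrightarrow{\ \varpi\ } \mathcal{G}_n \to \mathcal{G}_n|_{\Sh^{\PR,\tor}_\FF} \to 0$ yields a cohomology long exact sequence showing that $H^i(\Sh^{\PR,\tor}, \mathcal{G}_n) \otimes_\calO \FF$ embeds into $H^i(\Sh^{\PR,\tor}_\FF, \mathcal{G}_n|_{\Sh^{\PR,\tor}_\FF})$; as $H^i(\Sh^{\PR,\tor}, \mathcal{G}_n)$ is a finitely generated $\calO$-module, Nakayama's lemma then lets us replace the assertion by the statement that $H^i(\Sh^{\PR,\tor}_\FF, \mathcal{G}_n|_{\Sh^{\PR,\tor}_\FF}) = 0$ for all $i > 0$ and all $n$ sufficiently large (depending on $(\bfk, w)$).

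Over the special fiber, let $\pi \colon \Sh^{\PR,\tor}_\FF \to \Sh^{\PR,*}_\FF$ denote the canonical contraction onto the minimal compactification appearing in Lemma~\ref{L:ampleness}. By that lemma, applied to the fixed even integer $\ttN$, we have $\omega^{(\underline\ttN + \underline{\mathrm{ex}}, 0)}_\FF \cong \pi^* A$ for an ample line bundle $A$ on the proper $\FF$-scheme $\Sh^{\PR,*}_\FF$, so that $\mathcal{G}_n|_{\Sh^{\PR,\tor}_\FF} \cong \big(\omega^{(\bfk, w)}_\FF(-\ttD)\big) \otimes \pi^* A^{\otimes n}$. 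I would then invoke the vanishing of the higher direct images of cuspidal automorphic sheaves under $\pi$, namely $R^q \pi_* \big(\omega^{(\bfk,w)}_\FF(-\ttD)\big) = 0$ for all $q > 0$, together with the coherence of $\pi_* \big(\omega^{(\bfk,w)}_\FF(-\ttD)\big)$. By the projection formula this gives $R^q \pi_* \big(\mathcal{G}_n|_{\Sh^{\PR,\tor}_\FF}\big) = 0$ for $q > 0$ and $\pi_* \big(\mathcal{G}_n|_{\Sh^{\PR,\tor}_\FF}\big) \cong \pi_* \big(\omega^{(\bfk,w)}_\FF(-\ttD)\big) \otimes A^{\otimes n}$, so that the Leray spectral sequence for $\pi$ degenerates and yields, for every $i$,
\[
H^i\big(\Sh^{\PR,\tor}_\FF,\ \mathcal{G}_n|_{\Sh^{\PR,\tor}_\FF}\big) \;\cong\; H^i\big(\Sh^{\PR,*}_\FF,\ \pi_*(\omega^{(\bfk,w)}_\FF(-\ttD)) \otimes A^{\otimes n}\big).
\]
Since $A$ is ample and $\Sh^{\PR,*}_\FF$ is proper, Serre's vanishing theorem supplies an integer $n_0 = n_0(\bfk, w)$ making the right-hand side vanish for all $i > 0$ and $n \geq n_0$, which completes the argument.

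The only step that is not formal — and the one I expect to be the main obstacle — is the vanishing $R^q \pi_* \big(\omega^{(\bfk,w)}_\FF(-\ttD)\big) = 0$ for $q > 0$. The point is that the boundary of $\Sh^{\PR,\tor}_\FF$ lies in the locus where the underlying abelian scheme degenerates to a torus, hence inside the (inverse image of the) Rapoport locus; consequently a neighborhood of the boundary in $\Sh^{\PR,\tor}_\FF \to \Sh^{\PR,*}_\FF$ is identified with a neighborhood of the boundary in the usual toroidal-to-minimal contraction of the Hilbert modular variety \`a la Dimitrov--Tilouine \cite{DT}, whose fibers are toric varieties, and the desired vanishing for the cuspidal sheaf $\omega^{(\bfk,w)}(-\ttD)$ is then the same local computation as in the case where $p$ is unramified in $F$ (cf.~\cite{dimitrov} and \cite[Section~3]{emerton-reduzzi-xiao}). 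One could alternatively try to upgrade Lemma~\ref{L:ampleness} to an ampleness statement over $\calO$ and run Serre vanishing directly on $\Sh^{\PR,*}$, but verifying ampleness on the generic fiber is less immediate, so passing to characteristic $p$ first seems the cleaner route.
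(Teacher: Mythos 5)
Your proposal is correct and is essentially the paper's own argument: the paper simply invokes the proof of \cite[Lemma~4.2.2]{emerton-reduzzi-xiao} together with the ampleness of Lemma~\ref{L:ampleness}, which amounts exactly to your chain of steps --- reduction to the special fiber via Nakayama (needed since Lemma~\ref{L:ampleness} is only a statement over $\FF$), contraction $\pi\colon \Sh^{\PR,\tor}_\FF \to \Sh^{\PR,*}_\FF$, vanishing of $R^q\pi_*$ of the cuspidal sheaf (a boundary computation identical to the unramified case because the boundary lies over the Rapoport locus), the projection formula, and Serre vanishing.
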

\begin{proof}
This follows from the exact same argument as in \cite[Lemma~4.2.2]{emerton-reduzzi-xiao}, using the ampleness from Lemma~\ref{L:ampleness}.
\end{proof}

\begin{proposition}
\label{P:existence of change weights}
Let $\bfM \in (2p^{m-1}\ZZ_{\geq 0})^\Sigma$ be a tuple having support $J \subseteq \Sigma$, and let $(\bfk_1, w), \dots, (\bfk_t, w)\in \ZZ^\Sigma \times \ZZ$ be paritious weights (with the same normalization factor $w$).
Then there exists a tuple
\[
\sum_{\tau \in J} N_\tau \bfe_\tau \in (2p^{m-1}\ZZ_{> 0})^J
\]
such that the paritious weight $(\bfk_\alpha + \sum_{\tau \in J} N_\tau \bfp_\tau,w)$ is favorable with respect to $\bfM$ for every $\alpha =1,\dots, t$.
\end{proposition}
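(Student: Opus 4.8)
The plan is to deduce favorability of $(\bfk_\alpha + \sum_{\tau\in J}N_\tau\bfp_\tau, w)$ on $Z_\bfM$ from two facts about the \emph{full} compactified variety $\Sh^{\PR,\tor}_{R_m}$: the ampleness vanishing of Proposition~\ref{P:ampleness}, and the (classical) Galois-type property of $H^0$ of cusp forms of regular weight. The mechanism for transporting these to $Z_\bfM$ is twofold: first move the weight along the divisorial directions $\Sigma\setminus J$ cut out by $\bfM$ using the trivializations $\tilde b_\bfT$, which over $Z_\bfM$ are Hecke-equivariant isomorphisms and hence cost nothing; then resolve the restriction to $Z_\bfM$ by the Koszul complex on the sections $\tilde h_{\tau', M_{\tau'}}$, $\tau'\in\Sigma\setminus J$. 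Recall $J = |\bfM| = \{\tau:M_\tau = 0\}$, so $Z_\bfM = \bigcap_{\tau'\notin J} Z_{M_{\tau'}\bfe_{\tau'}}$ is cut out inside $\Sh^{\PR,\tor}_{R_m}$ by the $\tilde h_{\tau', M_{\tau'}}\in H^0(\Sh^{\PR,\tor}_{R_m},\omega^{(M_{\tau'}\bfp_{\tau'},0)}_{R_m})$.

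First I would choose the tuple $(N_\tau)_{\tau\in J}$. By Lemma~\ref{L:numerics}, with the fixed $\ttN$, the point $\underline\ttN + \underline\ex$ lies in the interior of $\mathrm{Cone}_J$, so we may write $\underline\ttN + \underline\ex = \sum_{\tau\in J} a_\tau\bfp_\tau + \sum_{\tau'\notin J} b_{\tau'}\bfq_{\tau'}$ with all $a_\tau, b_{\tau'}\in\QQ_{>0}$. I then fix a sufficiently large $n\in\ZZ_{>0}$, in a suitable arithmetic progression, so that $N_\tau := na_\tau$ ($\tau\in J$) and $T_{\tau'} := nb_{\tau'}$ ($\tau'\notin J$) are positive integers divisible by $2p^{m-1}$, so that $T_{\tau'} > M_{\tau'} + 2p^{m-1}$ for all $\tau'\notin J$, and so that $n$ exceeds every threshold $n_0(\bfk_\alpha - \sum_{\tau'\in S}M_{\tau'}\bfp_{\tau'}, w)$ of Proposition~\ref{P:ampleness} as $\alpha$ ranges over $1,\dots,t$ and $S$ over subsets of $\Sigma\setminus J$ (finitely many choices). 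Setting $T_\tau := 0$ for $\tau\in J$ and $\bfT := (T_\tau)_\tau$, we then have $\sum_{\tau\in J} N_\tau\bfp_\tau + \sum_\tau T_\tau\bfq_\tau = n(\underline\ttN + \underline\ex)$, and the same $(N_\tau)_{\tau\in J}$ will serve for every $\alpha$.

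Now fix $\alpha$, put $\bfk^\sharp := \bfk_\alpha + n(\underline\ttN+\underline\ex)$, and note that since $T_\tau = 0$ on $J = |\bfM|$ and $T_{\tau'} > M_{\tau'} + 2p^{m-1}$ off $J$, multiplication by $\tilde b_\bfT$ gives a Hecke-equivariant isomorphism $\omega^{(\bfk_\alpha+\sum_{\tau\in J}N_\tau\bfp_\tau, w)}_{R_m}(-\ttD)|_{Z_\bfM}\xrightarrow{\ \cong\ }\omega^{(\bfk^\sharp, w)}_{R_m}(-\ttD)|_{Z_\bfM}$; so it suffices to prove $(\bfk^\sharp, w)$ favorable with respect to $\bfM$. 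By Theorem~\ref{T:smoothness of ramified GO strata} the divisors $Z_{\tau'}$ are smooth with simple normal crossings, so mod $\varpi$ the sections $h_{\tau'}^{M_{\tau'}}$ ($\tau'\notin J$) form a regular sequence, and by $\calO$-flatness of $\Sh^{\PR,\tor}$ so do the $\tilde h_{\tau', M_{\tau'}}$ on $\Sh^{\PR,\tor}_{R_m}$. Thus the Koszul complex on these sections, tensored with $\omega^{(\bfk^\sharp, w)}_{R_m}(-\ttD)$, resolves $\omega^{(\bfk^\sharp, w)}_{R_m}(-\ttD)|_{Z_\bfM}$ by the locally free sheaves $\bigoplus_{S\subseteq\Sigma\setminus J,\,|S|=s}\omega^{(\bfk_\alpha - \sum_{\tau'\in S}M_{\tau'}\bfp_{\tau'} + n(\underline\ttN+\underline\ex), w)}_{R_m}(-\ttD)$ in degree $-s$. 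By Proposition~\ref{P:ampleness} applied to the finitely many base weights $\bfk_\alpha - \sum_{\tau'\in S}M_{\tau'}\bfp_{\tau'}$, together with the long exact sequence of $0\to\omega(-\ttD)\xrightarrow{\varpi^m}\omega(-\ttD)\to\omega_{R_m}(-\ttD)\to 0$, every term of this resolution has no higher cohomology on $\Sh^{\PR,\tor}_{R_m}$, so the cohomology of $\omega^{(\bfk^\sharp,w)}_{R_m}(-\ttD)|_{Z_\bfM}$ is computed by the complex of global sections of the resolution. This yields $H^i(Z_\bfM, \omega^{(\bfk^\sharp,w)}_{R_m}(-\ttD)) = 0$ for $i>0$, and identifies $H^0(Z_\bfM, \omega^{(\bfk^\sharp,w)}_{R_m}(-\ttD))$ with the cokernel of the Hecke-equivariant map (Lemma~\ref{L:lift of partial Hasse invariant}) $\bigoplus_{\tau'\notin J}H^0(\Sh^{\PR,\tor}_{R_m}, \omega^{(\bfk^\sharp-M_{\tau'}\bfp_{\tau'}, w)}_{R_m}(-\ttD))\to H^0(\Sh^{\PR,\tor}_{R_m}, \omega^{(\bfk^\sharp, w)}_{R_m}(-\ttD))$. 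Since $\ttN$ and $n$ are large, every weight $\bfk^\sharp - \sum_{\tau'\in S}M_{\tau'}\bfp_{\tau'}$ is regular, so the $H^0$'s appearing here are of Galois type by the classical construction of two-dimensional Galois representations attached to Hilbert cuspidal eigenforms of regular paritious weight and the congruence argument of \cite[\S4]{emerton-reduzzi-xiao}; as Galois-type $\TT^\univ_\ttS$-modules are stable under cokernels of continuous Hecke maps, $H^0(Z_\bfM, \omega^{(\bfk^\sharp,w)}_{R_m}(-\ttD))$ is of Galois type. Hence $(\bfk^\sharp, w)$, and therefore $(\bfk_\alpha + \sum_{\tau\in J}N_\tau\bfp_\tau, w)$, is favorable with respect to $\bfM$.

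The step I expect to be the main obstacle is the Koszul dévissage: verifying that the $\tilde h_{\tau',M_{\tau'}}$ genuinely form a regular sequence on the $R_m$-scheme $\Sh^{\PR,\tor}_{R_m}$ — exactly where the simple-normal-crossing conclusion of Theorem~\ref{T:smoothness of ramified GO strata} and the $\calO$-flatness of $\Sh^{\PR,\tor}$ enter — and then propagating both the higher-cohomology vanishing and the Galois-type property from the full variety down to $Z_\bfM$ along the resolution while tracking Hecke equivariance at each stage. (Alternatively one can dismantle the Koszul complex into a chain of short exact sequences, cutting by one $\tilde h_{\tau',M_{\tau'}}$ at a time, at the cost of verifying favorability for a larger but still finite collection of intermediate weights on the intermediate strata.) The arithmetic needed to pin down $n$ so that the $N_\tau$ and $T_{\tau'}$ are simultaneously positive, divisible by $2p^{m-1}$, large enough for the $\tilde b$-trivializations, and beyond the ampleness thresholds is routine bookkeeping. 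The one genuinely external ingredient is the Galois-type property of $H^0$ of cusp forms of regular weight on the full compactified Shimura variety, imported from \cite{emerton-reduzzi-xiao}.
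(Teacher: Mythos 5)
Your proof is correct and follows essentially the same route as the paper, whose own proof is just a citation to \cite[Lemmas~4.3.2--4.3.3]{emerton-reduzzi-xiao} combined with Lemma~\ref{L:numerics} and Proposition~\ref{P:ampleness}: you reconstruct precisely that argument, choosing the $N_\tau$ and $T_{\tau'}$ from the cone lemma so that $\sum_{\tau\in J}N_\tau\bfp_\tau+\sum_{\tau'\notin J}T_{\tau'}\bfq_{\tau'}$ is a large multiple of $\underline\ttN+\underline\ex$, trivializing the $\bfq$-directions over $Z_\bfM$ via $\tilde b_\bfT$, and then devissaging the restriction to $Z_\bfM$ through the $\tilde h_{\tau',M_{\tau'}}$ with the vanishing of Proposition~\ref{P:ampleness} and the Galois-type property of $H^0$ in regular weight. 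The only (cosmetic) difference is that you package the dévissage as a Koszul resolution rather than cutting by one divisor at a time, an alternative you yourself note.
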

\begin{proof}
This can be proved in the same way as \cite[Lemma~4.3.2 and Lemma~4.3.3]{emerton-reduzzi-xiao}, using the combinatorial input from Lemma~\ref{L:numerics} and the vanishing result of Proposition~\ref{P:ampleness}.
\end{proof}

Given two tuples $\mathbf{M},\mathbf{M}^{\prime}\in\left(2p^{m-1}%
\mathbb{Z}_{\geq0}\right)  ^{\Sigma}$ and two paritious weights $(\bfk, w),(\bfk', w)
\in\mathbb{Z}^{\Sigma} \times \ZZ$ with the same $w$, a
homomorphism of sheaves of $\mathcal{O}_{\Sh_{R_m}^{\PR,\operatorname*{tor}}}%
$-modules
\[
\xi:\omega_{R_m}^{(\bfk, w)}(-\mathtt{D})_{|Z_{\mathbf{M}}}\rightarrow
\omega_{R_m}^{(\bfk',w)}(-\mathtt{D})_{|Z_{\mathbf{M}^{\prime}}}%
\]
is called \emph{admissible} (cf. \cite[Definition~4.4.1]{emerton-reduzzi-xiao}) either if it is the zero homomorphism, or if the
following three conditions are satisfied:

\begin{itemize}
\item $\mathbf{k}^{\prime}-\mathbf{k}=\mathbf{N}_{\mathbf{p}}$ for some
$\mathbf{N}\in\left(  2p^{m-1}\mathbb{Z}_{\geq0}\right)  ^{\Sigma}$ (cf.\ \ref{S:lifting of h}
for the meaning of this notation),

\item $\xi$ is induced by multiplication by $\alpha\tilde{h}_{\mathbf{N}}$ for
some $\alpha\in R_{m}$, and

\item for each $\tau$ such that $M_\tau>0$, we have $M_\tau^{\prime}>0$ and
$M_\tau+(k_\tau^{\prime}-k_\tau)\geq M_\tau^{\prime}$.
\end{itemize}
Any such admissible homomorphism is $\TT_\ttS^\univ$-equivariant. An admissible complex is a bounded complex of sheaves of $\calO_{\Sh^{\PR,\tor}_{R_m}}$-modules in which each term is a finite direct sum of automorphic sheaves of paritious weight of the form $\omega_{R_m}^{(\bfk, w)}(-\mathtt{D})_{|Z_{\mathbf{M}}}$ with $w$ fixed, and whose differentials are given by matrices of admissible homomorphisms.

\begin{theorem}
\label{T:main}
Let $(\bfk, w) \in \ZZ^\Sigma \times \ZZ$ be a paritious weight.  There exists an admissible complex $C^{\bullet}$ quasi-isomorphic to $\omega^{(\bfk,w)}_{R_m}(-\mathtt{D})$ such that all terms of $C^{\bullet}$ are
favorable. Hence there exists a bounded complex of $\mathbb{T}^\univ
_{\ttS}$-modules of Galois type whose cohomology groups are
$H^{\bullet}(\Sh^{\PR,\operatorname*{tor}}_{R_m},\omega^{(\bfk,w)}_{R_m}
(-\mathtt{D}))$.
\end{theorem}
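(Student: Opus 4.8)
The plan is to follow the blueprint of \cite[\S4.3--4.6]{emerton-reduzzi-xiao}, now that all the geometric and combinatorial inputs it requires are available in the ramified case: the liftings $\tilde h_\bfM$ of (products of) generalized partial Hasse invariants together with Lemma~\ref{L:lift of partial Hasse invariant}, the isomorphisms of the form $\cdot\,\tilde b_\bfT$, the smoothness of the strata and the Kodaira--Spencer filtration of Theorems~\ref{T:Kodaira-Spencer ramified} and~\ref{T:smoothness of ramified GO strata}, the vanishing/ampleness statement Proposition~\ref{P:ampleness}, the existence of favorable weights Proposition~\ref{P:existence of change weights}, and the cone lemma~\ref{L:numerics}. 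Granting the admissible complex $C^\bullet$, the second assertion is formal: every term $C^p$ is a favorable sheaf, hence has vanishing higher cohomology on the (proper) stratum supporting it, so in the hypercohomology spectral sequence $E_1^{p,q}=H^q(\Sh^{\PR,\tor}_{R_m},C^p)\Rightarrow\mathbb H^{p+q}(\Sh^{\PR,\tor}_{R_m},C^\bullet)$ the rows $q>0$ vanish; it degenerates at $E_1$ and gives $H^\bullet(\Sh^{\PR,\tor}_{R_m},\omega^{(\bfk,w)}_{R_m}(-\ttD))=H^\bullet\big(H^0(\Sh^{\PR,\tor}_{R_m},C^\bullet)\big)$. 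The complex $H^0(\Sh^{\PR,\tor}_{R_m},C^\bullet)$ is bounded, its terms are direct sums of Galois-type modules (favorability), and its differentials, being induced by admissible maps, are $\TT_\ttS^\univ$-equivariant; hence it is a bounded complex of $\TT_\ttS^\univ$-modules of Galois type with the required cohomology.

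So the real work is the construction of $C^\bullet$. I would argue by induction on $\dim\bfM=\#|\bfM|$, producing for every paritious weight $(\bfk,w)$ and every $\bfM\in(2p^{m-1}\ZZ_{\geq0})^\Sigma$ a favorable admissible complex quasi-isomorphic to $\omega^{(\bfk,w)}_{R_m}(-\ttD)|_{Z_\bfM}$, and then taking $\bfM=\mathbf0$. In the base case $\dim\bfM=0$, $Z_\bfM$ is proper and $0$-dimensional, so the sheaf has no higher cohomology, and by Proposition~\ref{P:existence of change weights} (with empty support) its global sections are of Galois type; the one-term complex already works. For $\dim\bfM=d>0$ with support $J=|\bfM|$, I fix $\tau\in J$ and a large $N\in 2p^{m-1}\ZZ_{>0}$. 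The section $\tilde h_{\tau,N}$ restricts to a non-zero-divisor on $\mathcal O_{Z_\bfM}$ — a consequence of the flatness of $Z_\bfM$ over $R_m$ together with the fact that $h_\tau$ vanishes on no irreducible component of $Z_\calT$, by Theorem~\ref{T:smoothness of ramified GO strata} — so there is a short exact sequence
\[
0\to\omega^{(\bfk-N\bfp_\tau,w)}_{R_m}(-\ttD)|_{Z_\bfM}\xrightarrow{\ \cdot\tilde h_{\tau,N}\ }\omega^{(\bfk,w)}_{R_m}(-\ttD)|_{Z_\bfM}\to\omega^{(\bfk,w)}_{R_m}(-\ttD)|_{Z_{\bfM+N\bfe_\tau}}\to0,
\]
equivalently $\omega^{(\bfk,w)}_{R_m}(-\ttD)|_{Z_\bfM}$ is quasi-isomorphic to the two-term complex $[\,\omega^{(\bfk+N\bfp_\tau,w)}_{R_m}(-\ttD)|_{Z_\bfM}\xrightarrow{\mathrm{res}}\omega^{(\bfk+N\bfp_\tau,w)}_{R_m}(-\ttD)|_{Z_{\bfM+N\bfe_\tau}}\,]$. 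Using Propositions~\ref{P:ampleness} and~\ref{P:existence of change weights} (and, on the deeper stratum, the isomorphisms $\cdot\,\tilde b_\bfT$) I would choose $N$, and if needed a further common shift of the weight along the $\bfp$-directions in $J$, so that the degree-$0$ term is favorable on $Z_\bfM$; the degree-$1$ term lives on $Z_{\bfM+N\bfe_\tau}$, of dimension $d-1$, hence admits a favorable admissible resolution by the inductive hypothesis. Splicing (i.e. taking the mapping cone of the evident map from a shift of that resolution into the degree-$0$ term) yields the desired complex, and the new differentials are compositions of restrictions and of multiplications by the relevant $\tilde h$'s, so the support/monotonicity conditions defining admissibility hold by construction.

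The hard part will be the combinatorial bookkeeping in the inductive step: one must pick, coherently over all the strata and all the chains that enter the iterated-cone construction, weights that are simultaneously favorable on the relevant stratum and transition maps that remain admissible — in particular verifying the conditions $M_\tau>0\Rightarrow M'_\tau>0$ and $M_\tau+(k'_\tau-k_\tau)\geq M'_\tau$ at every step. This is precisely what \cite[Lemmas~4.3.2 and~4.3.3]{emerton-reduzzi-xiao} take care of, and the argument transcribes verbatim; the only new ingredient is that the ambient cones are now generated by the classes $\bfp_\tau,\bfq_{\tau'}$ attached to the $g$ generalized partial Hasse invariants of Section~3, for which the needed interior-point property is supplied by Lemma~\ref{L:numerics}. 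A subsidiary technical point, which is routine given Theorems~\ref{T:Kodaira-Spencer ramified} and~\ref{T:smoothness of ramified GO strata}, is checking that the $\tilde h_{\tau,N}$ — and their restrictions to the (possibly non-reduced) thickened strata $Z_\bfM$ — are non-zero-divisors, so that the short exact sequences above, and hence the cones, are what one expects.
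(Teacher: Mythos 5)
Your proposal is correct and follows essentially the same route as the paper: the paper's proof simply invokes \cite[Theorem~4.4.4 and Lemma~4.4.5]{emerton-reduzzi-xiao}, noting that the two inputs those arguments need are now supplied by Theorem~\ref{T:smoothness of ramified GO strata} (simple normal crossings of the $Z_\tau$) and Proposition~\ref{P:existence of change weights} (the change-of-weight tuples), and your induction-on-strata/mapping-cone construction together with the $E_1$-degeneration argument is exactly the content of that cited proof, with the non-zero-divisor property of the $\tilde h_{\tau,N}$ on the thickened strata handled as in \emph{loc.cit.}
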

\begin{proof}
This can be proved in the same way as \cite[Theorem~4.4.4]{emerton-reduzzi-xiao}.
Indeed, the proof in \emph{loc.cit.} relies on \cite[Lemma~4.4.5]{emerton-reduzzi-xiao}, which makes use of the following two facts:
\begin{itemize}
\item
the divisors $Z_\tau$ have simple normal crossings, which is proved in the context of splitting models in Theorem~\ref{T:smoothness of ramified GO strata}, and
\item
the existence of suitable tuples of integers $\bfN^{(J)}$ (cf. Lemma 4.4.5 in \emph{loc.cit.}) which allow one to construct the favorable resolution; such tuples are constructed in our context in Proposition~\ref{P:existence of change weights}.\qedhere
\end{itemize}
\end{proof}

\begin{corollary}
\label{C:main}
Let $(\mathbf{k}, w)\in\mathbb{Z}^\Sigma \times \ZZ$ be a paritious weight. Any
cuspidal Hecke eigenclass $c\in H^{\bullet}(\Sh^{\PR,\operatorname*{tor}}_{R_m}
,\omega_{R_m}^{(\mathbf{k}, w)}(-\mathtt{D}))$ has canonically attached a continuous,
$R_{m}$-linear, two-dimensional pseudo-representation $\tau_{c}$ of the Galois
group $G_{F,\ttS}$ such that
\[
\tau_{c}(\operatorname*{Frob}\nolimits_{\gothq}) =a_{\gothq}%
\]
for all finite primes $\gothq$ of $F$ outside $\ttS$, where $T_{\gothq
}c=a_{\gothq}c$.
\end{corollary}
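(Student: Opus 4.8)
The plan is to deduce Corollary~\ref{C:main} from Theorem~\ref{T:main} by the same formalism used in \cite{emerton-reduzzi-xiao}, so the work is entirely bookkeeping with complexes of Galois type rather than any new geometry. First I would invoke Theorem~\ref{T:main}: fix the paritious weight $(\bfk,w)$ and let $C^\bullet$ be an admissible complex of favorable sheaves quasi-isomorphic to $\omega^{(\bfk,w)}_{R_m}(-\ttD)$. Because each term of $C^\bullet$ is favorable, its higher cohomology vanishes and its $H^0$ is of Galois type; taking global sections term by term therefore produces a bounded complex $K^\bullet := \big(H^0(\Sh^{\PR,\tor}_{R_m}, C^\bullet)\big)$ of $\TT^\univ_\ttS$-modules, each of Galois type, computing $H^\bullet(\Sh^{\PR,\tor}_{R_m}, \omega^{(\bfk,w)}_{R_m}(-\ttD))$. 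Here one uses that admissible homomorphisms are $\TT^\univ_\ttS$-equivariant (stated in the excerpt) so that the differentials of $K^\bullet$ are Hecke-equivariant, and that the formation of $H^0$ of a favorable sheaf is exact in the relevant sense so that the cohomology of $K^\bullet$ really is the cuspidal coherent cohomology with its Hecke action.

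Next I would record the closure property: kernels and cokernels of continuous $\TT^\univ_\ttS$-equivariant maps between modules of Galois type are again of Galois type, hence so are all cohomology groups of $K^\bullet$. In particular $H^\bullet(\Sh^{\PR,\tor}_{R_m},\omega^{(\bfk,w)}_{R_m}(-\ttD))$ is a $\TT^\univ_\ttS$-module on which the action factors through (the image of $\TT^\univ_\ttS$ in) the universal pseudodeformation ring $\calR^\ps_{G_{F,\ttS}}$ and extends continuously to an $\calR^\ps_{G_{F,\ttS}}$-action. Now let $c$ be a cuspidal Hecke eigenclass with $T_\gothq c = a_\gothq c$ for $\gothq\notin\ttS$; the eigensystem defines an $\calO$-algebra homomorphism $\lambda_c\colon \TT^\univ_\ttS \to R_m$, $t_\gothq \mapsto a_\gothq$, which by the Galois-type property factors through $\calR^\ps_{G_{F,\ttS}} \to R_m$. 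Composing the universal two-dimensional pseudo-representation $G_{F,\ttS} \to \calR^\ps_{G_{F,\ttS}}$ with this map yields the desired continuous $R_m$-linear pseudo-representation $\tau_c$, and by construction of the map $\TT^\univ_\ttS \to \calR^\ps_{G_{F,\ttS}}$ (sending $t_\gothq$ to $t_{\Frob_\gothq}$) we get $\tau_c(\Frob_\gothq) = \lambda_c(t_{\Frob_\gothq}) = \lambda_c(t_\gothq) = a_\gothq$ for all $\gothq\notin\ttS$. Continuity of $\tau_c$ follows from that of the universal pseudo-representation together with continuity of $\calR^\ps_{G_{F,\ttS}}\to R_m$ (a finite discrete ring), and canonicity is immediate since every choice made — the resolution, the global-sections functor — is functorial in the Hecke module.

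The only genuine obstacle is Theorem~\ref{T:main} itself, i.e. producing the admissible favorable resolution $C^\bullet$; but that is precisely where the new geometric input of this paper (Theorem~\ref{T:smoothness of ramified GO strata} on the simple-normal-crossing divisors $Z_\tau$, and Proposition~\ref{P:existence of change weights} on the existence of favorable weight shifts) is used, and the excerpt tells us it goes through exactly as \cite[Theorem~4.4.4]{emerton-reduzzi-xiao}. Given Theorem~\ref{T:main}, the passage to Corollary~\ref{C:main} is a short, formal argument: the subtlety one must be slightly careful about is that "of Galois type" is a property of the whole complex (each term) and is preserved under taking cohomology, so that it is legitimate to attach $\calR^\ps_{G_{F,\ttS}}$-structure to $H^\bullet$ even though the individual cuspidal cohomology spaces are not themselves obviously computed by a single favorable sheaf. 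I expect no difficulties beyond transcribing the argument of \cite[Section~4.4--4.5]{emerton-reduzzi-xiao} with $\Sh^\PR$ in place of the unramified Hilbert modular variety.
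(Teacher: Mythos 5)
Your deduction is correct and is exactly the route the paper intends: the paper gives no separate argument for Corollary~\ref{C:main}, treating it as the formal consequence of Theorem~\ref{T:main} via the Galois-type formalism of Section 4.1 (as in \cite{emerton-reduzzi-xiao}), which is precisely what you spell out. The only work is Theorem~\ref{T:main} itself, which you correctly take as given.
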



\begin{thebibliography}{9999}

\bibitem[AG05]{AG}F.~Andreatta and E.~Z. Goren, {\it Hilbert modular forms:
mod $p$ and $p$-adic aspects}, {\it Mem. Amer. Math. Soc.} {\bf 173} (2005), no. {\bf819}, vi+100 pp.

\bibitem[CG$12^+$a]{CG1}F.~Calegari and D.~Geraghty, Modularity liftings
beyond the {T}aylor-{W}iles method, {\tt arXiv:1207.4224}.

\bibitem[CG$12^+$b]{CG2}F.~Calegari and D.~Geraghty, Modularity liftings beyond the
{T}aylor-{W}iles method {II}, {\tt  arXiv:1209.6293}.

\bibitem[DP94]{deligne-pappas}
P.~Deligne and G.~Pappas,
Singularit\'es des espaces de modules de Hilbert en caract\'eristiques divisant le
discriminant, {\it Compositio Math.} {\bf 90} (1994), no. {\bf1},  59--79.

\bibitem[Di04]{dimitrov}
M.~Dimitrov,
Compactifications arithm\'etiques des vari\'et\'es de Hilbert et formes modulaires de Hilbert pour $\Gamma_1(\gothc, \gothn)$, in {\it Geometric Aspects of Dwork Theory,} Walter de Gruyter, Berlin, 2004, 527--554.

\bibitem[DT04]{DT}
 M.~Dimitrov and J.~Tilouine, Vari\'et\'es et formes modulaires de Hilbert arithm\'etiques pour $\Gamma_1(\gothc, \gothn)$, in {\it Geometric Aspects of Dwork Theory,} Walter de Gruyter, Berlin, 2004, 555--614.

\bibitem[ERX$13^+$]{emerton-reduzzi-xiao}
M. Emerton, D.A. Reduzzi, and L. Xiao,
 Galois representations and torsion in the coherent cohomology of Hilbert modular varieties, {\tt arXiv:1307.8003}, to appear in {\it J. Reine Angew. Math.}

\bibitem[Go01]{goren-hasse}
E.Z. Goren, Hasse invariants for Hilbert modular varieties, {\it Israel J. Math.} {\bf 122} (2001), 157--174.

\bibitem[GO00]{goren-oort}E.Z. Goren and F. Oort, Stratifications of Hilbert
Modular Varieties, \emph{J. Algebraic Geom.} \textbf{9} (2000), 111--154.

\bibitem[Gr74]{grothendieck}
A.~Grothendieck, 
{\it Groupes de Barsotti-Tate et cristaux de Dieudonn\'e}, {\it S\'eminaire de Math\'ematiques Sup\'erieures}, No. {\bf 45}, Les Presses de l'Universit\'e de Montr\'eal, Montreal, Que., 1974, 155 pp.

\bibitem[Hi06]{Hida}H.~Hida, \emph{Hilbert {M}odular {F}orms and {I}wasawa
{T}heory}, Oxford University Press, 2006, xiv+402 pp.

\bibitem[Ka78]{Katz}N.M. Katz, $p$-adic ${L}$-functions for {CM}
{F}ields, {\it Invent. Math.} \textbf{49} (1978), 199--297.

\bibitem[KL06]{kisin-lai}
M.~Kisin and K.F.~Lai,
Overconvergent Hilbert modular forms, {\it Amer. J. Math.} {\bf 127} (2005), no. {\bf4}, 735--783.

\bibitem[Laz06]{lazarsfeld}R.K. Lazarsfeld, \emph{Positivity in {A}lgebraic {G}eometry {I}}, Springer, 2004.

\bibitem[MM74]{mazur-messing}
B.~Mazur and W.~Messing,
{\it Universal extensions and one-dimensional crystalline
cohomology}, Lecture Notes in Math. Vol. {\bf 370}, Springer-Verlag (1974).

\bibitem[PR05]{pappas-rapoport}
G. Pappas and M. Rapoport,  Local models in the ramified case. II. Splitting models. {\it Duke Math. J.} {\bf 127} (2005), no. {\bf 2}, 193--250.

\bibitem[Ra78]{rapoport}
M.~Rapoport, Compactifications de l'espace de modules de Hilbert-Blumenthal, {\it Compositio Math.} {\bf 36} (1978), no. {\bf3}, 255--335.

\bibitem[Sa$14^+$]{sasaki}
S. Sasaki, Integral models of Hilbert modular varieties in the ramified case,
deformations of modular
 Galois representations, and weight one forms, {\it preprint} (2014).

\bibitem[TX$13^+$a]{tian-xiao1}
Y.~Tian and L.~Xiao,
On Goren-Oort stratification for quaternionic Shimura varieties, {\tt arXiv:1308.0790}, {\it to appear in Compos. Math.}


\bibitem[TX13+b]{tian-xiao2}
Y.~Tian and L.~Xiao,
$p$-adic cohomology and classicality of overconvergent Hilbert modular forms, {\tt arXiv:1308.0779}, {\it to appear in Ast\'erisque.}

\bibitem[TX$14^+$]{tian-xiao3}
Y.~Tian and L.~Xiao,
Tate cycles on quaternionic Shimura varieties over finite fields, {\tt arXiv:1410.2321}.

\bibitem[Vo05]{vollaard}
I. Vollaard, On the Hilbert-Blumenthal moduli problem. {\it J. Inst. Math. Jussieu} {\bf 4} (2005), no. {\bf 4}, 653--683.

\end{thebibliography}
\end{document}